\newtheorem{Theorem}{Theorem}[section]
\newtheorem{Proposition}[Theorem]{Proposition}
\newtheorem{Lemma}[Theorem]{Lemma}
\newtheorem{Corollary}[Theorem]{Corollary}
\newtheorem{Claim}[Theorem]{Claim}
\theoremstyle{definition}
\newtheorem{Definition}[Theorem]{Definition}
\newcommand{\rca}{\mathsf{RCA}_0}
\newcommand{\wkl}{\mathsf{WKL}_0}
\newcommand{\rrt}{\mathsf{RRT}}
\newcommand{\iso}{\mathsf{I}\Sigma^0_1}
\newcommand{\ipo}{\mathsf{I}\Pi^0_1}
\newcommand{\bso}{\mathsf{B}\Sigma^0_1}
\newcommand{\bpo}{\mathsf{B}\Pi^0_1}
\newcommand{\bst}{\mathsf{B}\Sigma^0_2}
\newcommand{\mlr}{\mathsf{MLR}}
\newcommand{\wtr}{\mathsf{W2R}}
\newcommand{\dnr}{\mathsf{DNR}}
\newcommand{\dnrr}[1]{#1\mbox{-}\mathsf{DNR}}
\newcommand{\mlrr}[1]{#1\mbox{-}\mathsf{MLR}}
\newcommand{\cinc}{C\mbox{-}\mathsf{INC}}
\newcommand{\cran}{\mathsf{CR}}
\newcommand{\sran}{\mathsf{SR}}
\newcommand{\bran}{\mathsf{BR}}
\newcommand{\wdrr}[1]{#1\mbox{-}\mathsf{WDR}}
\newcommand{\pam}{\mathsf{PA^{-}}}
\DeclareMathOperator{\dom}{\mathrm{dom}}
\newcommand{\andd}{\wedge}
\newcommand{\la}{\langle}
\newcommand{\ra}{\rangle}
\newcommand{\da}{{\downarrow}}
\newcommand{\ua}{{\uparrow}}
\newcommand{\imp}{\rightarrow}
\newcommand{\Imp}{\Rightarrow}
\newcommand{\biimp}{\leftrightarrow}
\newcommand{\Biimp}{\Leftrightarrow}
\newcommand{\Nb}{\mathbb{N}}
\newcommand{\Qb}{\mathbb{Q}}
\newcommand{\Rb}{\mathbb{R}}
\newcommand{\Vb}{\mathbb{V}}
\newcommand{\rst}{{\restriction}}
\newcommand{\smf}{\smallfrown}
\newcommand{\leqT}{\leq_\mathrm{T}}
\newcommand{\leT}{<_\mathrm{T}}
\newcommand{\geqT}{\geq_\mathrm{T}}
\newcommand{\geT}{>_\mathrm{T}}
\newcommand{\leqm}{\leq_\mathrm{m}}
\newcommand{\leqtt}{\leq_\mathrm{tt}}
\newcommand{\leqwtt}{\leq_\mathrm{wtt}}
\newcommand{\leL}{<_\mathrm{L}}
\newcommand{\MP}[1]{\mathcal{#1}}
\newcommand{\mbf}[1]{\mathbf{#1}}
\newcommand{\mf}[1]{\mathfrak{#1}}
   \def\MR#1{}
\title{Randomness notions and reverse mathematics}
\author{Andr\'{e} Nies}
\address{
Department of Computer Science\\
University of Auckland\\
Private Bag 92019\\
Auckland\\
New Zealand\\
}
\email{andre@cs.auckland.ac.nz}
\urladdr{https://www.cs.auckland.ac.nz/~nies/}
\author{Paul Shafer}
\address{
School of Mathematics\\
University of Leeds\\
Leeds\\
LS2 9JT\\
UK}
\email{p.e.shafer@leeds.ac.uk}
\urladdr{http://www1.maths.leeds.ac.uk/~matpsh/}
\date{\today}
\begin{document}

\begin{abstract}
We investigate the strength of a randomness notion $\mathcal  R$ as  a set-existence principle in second-order arithmetic: for each $Z$ there is an $X$ that is $\mathcal R$-random relative to $Z$.  We show that the equivalence between $2$-randomness and being infinitely often $C$-incompressible is provable in $\rca$.  We verify that $\rca$ proves the basic implications  among randomness notions: $2$-random $\Rightarrow$ weakly  $2$-random $\Rightarrow$  Martin-L\"of random $\Rightarrow$  computably random $\Rightarrow$   Schnorr random.  Also, over $\rca$ the existence of computable randoms is equivalent to the existence of Schnorr randoms.  We show that the existence of balanced randoms is equivalent to the existence of Martin-L\"of randoms, and we describe a sense in which this result is nearly optimal.  
\end{abstract}

\maketitle

\section{Introduction}
\subsection*{Randomness} The theory of randomness via algorithmic tests has its beginnings in Martin-L\"of's paper  \cite{MartinLof:68}, in the work of Schnorr~\cites{Schnorr:71,Schnorr:75}, as well as in the work of Demuth such as~\cite{Demuth:75a}. Each of these authors employed  algorithmic tools to  introduce  tests of whether  an infinite bit sequence is random. Rather than an absolute notion of algorithmic randomness, a hierarchy of randomness notions emerged based on the strength of  the algorithmic tools that were allowed.  Martin-L\"of introduced the randomness notion now named after him, which was based on uniformly computably enumerable sequences of open sets in Cantor space. Schnorr considered more restricted tests based on computable betting strategies, which led to the weaker notion now called computable randomness and the even weaker notion now called Schnorr randomness.  Notions of randomness stronger than Martin-L\"of's but still arithmetical were introduced somewhat later by Kurtz \cite{Kurtz:81}.  Of importance for us will be   2-randomness (namely, ML-randomness relative to the halting problem), and the  notion of weak 2-randomness intermediate between 2-randomness and ML-randomness.   See Sections~\ref{sec-RandFormal}  and~\ref{sec-Implications} for the formal definitions.  

 The field of algorithmic randomness entered a   period of intense activity from   the late 1990s, with a flurry of research papers leading to the publication of two textbooks~\cites{NiesBook, DHBook}.  One reason for this was the realization, going back  to Ku\v{c}era~\cites{Kucera:85,Kucera:86},  that sets satisfying  randomness notions interact in a meaningful way with the computational complexity of Turing  oracles (the latter is a prime topic in computability theory).   One can discern two main directions  in  the study of randomness notions: 
 
 \medskip 
 \noindent \emph{(A) Characterizing  theorems.} Give conditions on bit sequences that are equivalent to being random in a particular sense, and thereby reveal more about the randomness notions. The   Levin-Schnorr theorem  characterizes ML-randomness of $Z$  by the incompressibility of $Z$'s initial segments in the sense of the prefix-free descriptive string complexity $K$:  $$Z \text{ is ML-random } \Leftrightarrow \,  \exists b \forall n \, K(Z \rst n) \ge n-b.$$  2-randomness is equivalent to being infinitely often incompressible in the sense of the plain descriptive string complexity $C$ 
  \cites{NiesStephanTerwijn,MillerKRand} (see also~\cite[Theorem~3.6.10]{NiesBook}):  $$Z \text{ is 2-random }   \Leftrightarrow \,  \exists b \exists^\infty  n \, C(Z \rst n) \ge n-b.$$ There are also  examples of characterizations not relying on the descriptive complexity of  initial segments. For instance, a bit sequence $Z$ is  2-random iff $Z$  is ML-random and the halting probability $\Omega$ is ML-random relative to it; $Z$ is weakly 2-random iff $Z$ is ML-random and bounds no incomputable set that is below the halting problem.

  \medskip

 \noindent \emph{(B) Separating theorems.} Given randomness notions that appear to be close to each other, one wants to find a bit sequence that is random in the weaker sense but not in the stronger sense. For instance, Schnorr provided a sequence that is Schnorr random but not computably random.   For a more recent example, Day and Miller \cite{Day.Miller:15} separated notions only slightly stronger than ML-randomness.  They  provided a sequence that is difference random but not density random. Difference randomness, introduced via so-called difference tests,  is equivalent to being  ML-random and Turing incomplete~\cite{Franklin.Ng:10}. Density randomness, by definition, is the combination of   ML-randomness and satisfying the conclusion of the Lebesgue density theorem for effectively closed sets.   
 
 Some separations of   randomness notions  are open problems. For instance, it is unknown whether  Oberwolfach randomness is stronger than density randomness~\cite[Section 6]{Miyabe.Nies.Zhang:16} and of course whether ML-randomness is stronger than Kolmogorov-Loveland randomness \cites{Ambos.Kucera:00,Miller.Nies:06}. 

Some motivation for obtaining separations of    notions that appear to be close was provided by the above mentioned  interaction of randomness with computability and, in particular, with  lowness  properties of oracles. The Turing incomplete ML-random set obtained in  the Day/Miller result  is Turing  above all $K$-trivial sets  because it is not  density random~\cite{Bienvenu.Day.ea:14}.  Whether such a set exists had been open for eight years \cite{Miller.Nies:06}.

\subsection*{The viewpoint of reverse mathematics} Our purpose is to study randomness notions from the viewpoint of  {reverse mathematics}. This program  in the foundations of mathematics,  introduced by H.\ Friedman~\cite{Friedman}, attempts to  classify the axiomatic strength of mathematical theorems.  The typical goal in reverse mathematics is to determine which axioms are necessary and sufficient to prove a given mathematical statement.  In order to do this, one fixes a base axiom system over which the reasoning is done. Then one asks which  stronger axioms must be added to this base system in order to prove a given statement.

Algorithmic randomness  plays an important role in reverse mathematics.  Axioms asserting that random sets exist are interesting because  typically they are weak compared to traditional comprehension schemes; in particular, the randomness notions that we consider produce axioms that are weaker than arithmetical comprehension. They still have important mathematical consequences, particularly concerning measure theory.    Given a   randomness notion~$\mathcal R$, we consider the statement ``for every set $Z$, there is a set $X$ that is $\mathcal R$-random relative to $Z$.'' Informally, we refer to this statement as the ``existence of $\mathcal R$-random sets.''  

Quite a bit is known in the case that  $\mathcal R$ is Martin-L\"of randomness. The existence of Martin-L\"of random sets is equivalent to \emph{weak weak K\"onig's lemma}, which states that every binary-branching tree of positive measure has an infinite path.  This equivalence is obtained by formalizing a classic result of Ku\v{c}era (see e.g.~\cite[Proposition~3.2.24]{NiesBook}).  
  Via the equivalence, the existence of Martin-L\"of random sets is also equivalent to the statement ``every Borel measure on a compact complete separable metric space is countably additive''~\cite{YuSimpson}, as well as  to the monotone convergence theorem for Borel measures on compact metric spaces~\cite{YuLebesgue} (see also~\cite[Section~X.1]{SimpsonSOSOA}).
  Recently~\cite{NiesTriplettYokoyama}, equivalences between the existence of Martin-L\"of random sets and well-known theorems from   analysis have been found:   ``every continuous function of bounded variation is differentiable at some point'' and   ``every continuous function of bounded variation is differentiable almost everywhere.''

In this paper we mainly consider the reverse mathematics of randomness notions other than Martin-L\"of's.  The two directions outlined above   lead to  two   types  of questions.

\medskip 
\begin{itemize} \item[(A)] Examine   whether characterizing  theorems   can be proved  over  a weak axiomatic system such as $\rca$.  

\item[(B)] For   randomness notions that appear close to each other yet  can be separated,  see whether nonetheless the corresponding existence principles are equivalent over $\rca$.  (If so, this would gives a precise  meaning to the intuition that the notions are close.)
\end{itemize}

\medskip

\subsection*{Results}  Our first result follows direction (A):  we investigate the above-mentioned fact that a set is $2$-random if and only if it is infinitely often $C$-incompressible.   Formalizing randomness notions relative to the halting problem is delicate in weak axiomatic systems because of subtleties involving the induction axioms.  For example, the existence of $2$-random sets does not imply $\Sigma^0_2$-bounding (equivalently, $\Delta^0_2$-induction)~\cite{SlamanFrag}, but weak weak K\"onig's lemma for $\Delta^0_2$ trees (i.e., \emph{$2$-weak weak K\"onig's lemma}) does imply $\Sigma^0_2$-bounding~\cite{AvigadDeanRute}.  Therefore the equivalence between the existence of $2$-random sets and $2$-weak weak K\"onig's lemma requires $\Sigma^0_2$-bounding.  It is  then  natural to ask how much induction is required to prove theorems about $2$-random sets. We show that the equivalence between $2$-randomness and infinitely often $C$-incompressibility can be proved without appealing to $\Sigma^0_2$-bounding.  Formalizing infinitely often $C$-incompressibility in weak systems is straightforward, whereas formalizing $2$-randomness in terms of tests is not, so we hope that the formalized equivalence between the two notions will prove useful in future applications of algorithmic randomness in reverse mathematics, in addition to being technically interesting.
  
Towards direction (B), as a motivating example  consider    balanced randomness,  introduced in   \cite[Section 7]{FigueiraHirschfeldtMillerNgNies} (see  Definition~\ref{df:balanced} below),  which was   the   first notion   slightly  stronger than ML-randomness considered (Oberwolfach, density, and difference randomness, discussed above,  are  even closer to ML-randomness).  
The existence of Martin-L\"of random sets is equivalent to the existence of balanced random sets (Theorem~\ref{thm-MLRimpBran} below). Always relative to some oracle,  if  a balanced random set exists, then that set is Martin-L\"of random; conversely, if a Martin-L\"of random set exists, then at least one of its ``halves'' (i.e., either the bits in the even positions or the bits in the odd positions)  is balanced random, so a balanced random set exists.  

  We show in Theorem~\ref{thm-WKLnothWDR}  that the preceding equivalence is nearly optimal, in the sense that if $h \colon \Nb \imp \Nb$ is any function that eventually dominates every function of the form $n \mapsto k^n$, then the existence of $h$-weakly Demuth random sets is strictly stronger than the existence of Martin-L\"of random sets.

Still following (B), we show that 
 the existence of Schnorr random sets is equivalent to the existence of computably random sets (Theorem~\ref{thm-SRimpCR}).
In all cases, we actually prove that the equivalence holds for the same oracle. 

 In the alternative context of the    Muchnik and Medvedev degrees (see~\cites{HinmanSurvey, SimpsonSurvey, SimpsonMassProblemsRandomness} for background), related work has recently been done by Miyabe~\cite{Miyabe}. He views randomness notions     as mass problems (so  there is no relativization). Miyabe shows that computable randomness and Schnorr randomness are Muchnik equivalent but not Medvedev equivalent, and he gives a similar result for difference randomness versus ML-randomness.  Yet another alternative context for (B) is given by the Weihrauch degrees. Randomness notions are now viewed as multivalued functions mapping an oracle $X$ to the sets random in $X$.  See~\cites{Brattka2015LasVC, brattka2016algebraic, Brattka2017UniformComputability}.  In the Weihrauch degrees, ML-randomness is strictly weaker than weak weak K\"onig's lemma.  Brattka and Pauly~\cite[Proposition~6.6]{brattka2016algebraic} exactly characterizes ML-randomness in terms of weak weak K\"onig's lemma and a weak choice principle.

This paper is organized as follows.  In Section~\ref{sec-RM}, we recall the basic axiom system $\rca$ and establish notational conventions.  In Sections~\ref{sec-RandFormal} and \ref{sec-Implications},  we explain how the randomness notions we discussed above  can be  formalized in second-order arithmetic.    In Section~\ref{sec-Inc}, we show that the equivalence between $2$-randomness and infinitely often $C$-incompressibility can be proved in $\rca$.    In Sections~\ref{sec-Implications} and~\ref{s:wD}, we study implications and equivalences among randomness notions as set-existence principles that can be proved in $\rca$.  In Section~\ref{sec-NonImp}, we exhibit non-implications over $\rca$ among certain randomness notions, recursion-theoretic principles, and combinatorial principles. 

\section{Preliminaries}\label{sec-RM}

\subsection*{Basic axioms}
  We provide  a short  introduction to the typical base system of reverse mathematics $\rca$ that suits our purposes here. We refer  the reader to    Simpson~\cite{SimpsonSOSOA} for further details.
The setting of $\rca$ is second-order arithmetic. Its axioms consist of:
\begin{itemize}
\item  
 The basic axioms  of Peano arithmetic (denoted $\pam$) expressing that the natural numbers form a discretely-ordered commutative semi-ring with $1$;

\medskip

\item the \emph{$\Sigma^0_1$ induction scheme} ($\iso$, for short), which consists of the universal
closures of all formulas of the form
\begin{align*}
\bigl(\varphi(0) \andd \forall n(\varphi(n) \imp \varphi(n+1))\bigr) \imp \forall n \varphi(n),\label{fmla-IND}\tag{$\star$}
\end{align*}
where $\varphi$ is $\Sigma^0_1$;

\medskip

\item the \emph{$\Delta^0_1$ comprehension scheme}, which consists of the universal closures of all formulas of the form
\begin{align*}
\forall n (\varphi(n) \biimp \psi(n)) \imp \exists X \forall n(n \in X \biimp \varphi(n)),
\end{align*}
where $\varphi$ is $\Sigma^0_1$, $\psi$ is $\Pi^0_1$, and $X$ is not free in $\varphi$. 
\end{itemize}

`$\rca$' stands for `recursive comprehension axiom,' which refers to the $\Delta^0_1$ comprehension scheme, and the `$0$' indicates that the induction scheme is restricted to $\Sigma^0_1$ formulas.  The intuition is that $\rca$ corresponds to computable mathematics.  To show that some set exists when working in $\rca$, one must show how to compute that set from existing sets.

$\rca$ proves many variants of the $\Sigma^0_1$ induction scheme, which we list here for the reader's reference.  In the list below, $\varphi$ is a formula and $\Gamma$ is a class of formulas.
\begin{itemize}
\item The \emph{induction axiom for $\varphi$} is the universal closure of \eqref{fmla-IND} above.  The $\Gamma$ induction scheme consists of the induction axioms for all $\varphi \in \Gamma$.

\medskip

\item The \emph{least element principle for $\varphi$} is the universal closure of the formula
\begin{align*}
\exists n \varphi(n) \imp \exists n[\varphi(n) \andd (\forall m < n)(\neg\varphi(m))].
\end{align*}
The $\Gamma$ least element principle consists of the least element principles for all $\varphi \in \Gamma$.

\medskip

\item The \emph{bounded comprehension axiom for $\varphi$} is the universal closure of the formula
\begin{align*}
\forall b \exists X \forall n[n \in X \biimp (n < b \andd \varphi(n))],
\end{align*}
where $X$ is not free in $\varphi$.  The bounded $\Gamma$ comprehension scheme consists of the bounded comprehension axioms for all $\varphi \in \Gamma$.

\medskip

\item The \emph{bounding (or collection) axiom for $\varphi$} is the universal closure of the formula
\begin{align*}
\forall a[(\forall n < a)(\exists m)\varphi(n,m) \imp \exists b(\forall n < a)(\exists m < b)\varphi(n,m)],
\end{align*}
where $a$ and $b$ are not free in $\varphi$.  The $\Gamma$ bounding scheme consists of the bounding axioms for all $\varphi \in \Gamma$.
\end{itemize}

In addition to $\iso$, $\rca$ proves
\begin{itemize}
\item the $\Pi^0_1$ induction scheme ($\ipo$);

\medskip

\item the $\Sigma^0_1$ least element principle and the $\Pi^0_1$ least element principle;

\medskip

\item the bounded $\Sigma^0_1$ comprehension scheme and the bounded $\Pi^0_1$ comprehension scheme;

\medskip

\item the $\Sigma^0_1$ bounding scheme ($\bso$).
\end{itemize}

The schemes $\iso$, $\ipo$, the $\Sigma^0_1$ least element principle, the $\Pi^0_1$ least element principle, the bounded $\Sigma^0_1$ comprehension scheme, and the bounded $\Pi^0_1$ comprehension scheme are all equivalent over $\pam$ (or over $\pam$ plus $\Delta^0_1$ comprehension in the case of the bounded comprehension schemes).  The scheme $\bso$ is weaker.  $\rca$ does \textbf{not} prove the $\Pi^0_1$ bounding scheme ($\bpo$), which is equivalent to both the $\Sigma^0_2$ bounding scheme ($\bst$) and the $\Delta^0_2$ induction scheme.  We refer the reader to~\cite[Section~I.2]{HajekPudlak} and \cite[Section~II.3]{SimpsonSOSOA} for proofs of these facts.  The equivalence of $\bst$ and $\Delta^0_2$ induction is proved in~\cite{SlamanInd}.

$\rca$ suffices to implement the typical codings ubiquitous in computability theory.  Finite strings, finite sets, integers, rational numbers, etc.\ are coded in the usual way.  Real numbers are coded by rapidly converging Cauchy sequences of rational numbers.  $\rca$ also suffices to define Turing reducibility $\leqT$ and an effective sequence $(\Phi_e)_{e \in \Nb}$ of all Turing functionals (see~\cite[Section~VII.1]{SimpsonSOSOA}).

\subsection*{Notation} Let us fix some notation and terminology for strings.  $\Nb^{<\Nb}$ denotes the set of all finite strings, and $2^{<\Nb}$ denotes the set of all finite binary strings.  We also sometimes use $2^n$ to denote the set of binary strings of length $n$, use $2^{< n}$ to denote the set of binary strings of length less than $n$, etc.  For strings $\sigma$ and $\tau$, $|\sigma|$ denotes the length of $\sigma$, $\sigma \subseteq \tau$ denotes that $\sigma$ is an initial segment of $\tau$, $\sigma^\smf\tau$ denotes the concatenation of $\sigma$ and $\tau$, and $\sigma \rst n = \la \sigma(0), \dots, \sigma(n-1) \ra$ denotes the initial segment of $\sigma$ of length $n$ (when $n \leq |\sigma|$).  The `$\subseteq$' and `$\rst$' notation extend to second-order objects, thought of as infinite strings.  For example, $\sigma \subseteq X$ denotes that $\sigma$ is an initial segment of $X$, and $X \rst n = \la X(0), \dots, X(n-1) \ra$ denotes the initial segment of $X$ of length $n$.  For a string $\sigma$, $[\sigma]$ denotes the basic open set determined by $\sigma$, i.e., the class of all $X$ such that $\sigma \subseteq X$.  Likewise, if $U$ is a set of strings, then $[U]$ represents the open set determined by $U$, and $X \in [U]$ abbreviates $(\exists \sigma \in U)(\sigma \subseteq X)$.  As usual, a \emph{tree} is a set $T \subseteq \Nb^{<\Nb}$ that is closed under initial segments:  $\forall \sigma \forall \tau((\sigma \in T \andd \tau \subseteq \sigma) \imp \tau \in T)$.  $T^n = \{\sigma \in T : |\sigma| = n\}$ denotes the $n$\textsuperscript{th} level of tree $T$.  A function $f$ is a \emph{path} through a tree $T$ if every initial segment of $f$ is in $T$:  $\forall n (f \rst n \in T)$.  $[T]$ denotes the set of paths through tree $T$.

We follow the common convention distinguishing the two symbols `$\Nb$' and `$\omega$' in reverse mathematics.  `$\Nb$' denotes the (possibly non-standard) first-order part of whatever structure is implicitly under consideration, whereas `$\omega$' denotes the standard natural numbers.  We write `$\Nb$' when explicitly working in a formal system, such as when proving some implication over $\rca$.  We write `$\omega$' when constructing a standard model of $\rca$ witnessing some non-implication.

\section{Formalizing algorithmic randomness in second-order arithmetic}\label{sec-RandFormal}

Here and at the beginning of Section~\ref{sec-Implications} we provide a reference for  formalized definitions from effective topology and algorithmic randomness for use in $\rca$, following the style of~\cite{AvigadDeanRute}.   The notions we review here easily form  a linear hierarchy according to randomness strength; however, it will require some effort to verify these implications  in $\rca$.  

In order to define Martin-L\"of randomness in $\rca$, we must define (codes for) effectively open sets and the measures of these sets.  We could of course consider $2^\Nb$ as a complete separable metric space in $\rca$ (see~\cite[Section~II.5]{SimpsonSOSOA}) and use the corresponding notion of open set.  Instead, we use the following equivalent formulation because it more closely resembles the definition used in algorithmic randomness, and it makes defining an open set's measure a little easier.
\begin{Definition}[$\rca$]\label{def-EffOpenSet}{\ }
\begin{itemize}
\item A \emph{code for a $\mbf{\Sigma^0_1}$ set} (or an \emph{open} set) is a sequence $(B_i)_{i \in \Nb}$, where each $B_i$ is a coded finite subset of $2^{<\Nb}$.

\medskip

\item A \emph{code for a $\Sigma^{0,Z}_1$ set} is a code $(B_i)_{i \in \Nb}$ for a $\mbf{\Sigma^0_1}$ set with $(B_i)_{i \in \Nb} \leqT Z$.

\medskip

\item A \emph{code for a uniform sequence of $\Sigma^{0,Z}_1$ sets} is a double-sequence $(B_{n,i})_{n,i \in \Nb} \leqT Z$, where $(B_{n,i})_{i \in \Nb}$ is a code for a $\Sigma^{0,Z}_1$ set for each $n \in \Nb$.

\medskip

\item If $\MP U = (B_i)_{i \in \Nb}$ codes a $\mbf{\Sigma^0_1}$ set, then `$X \in \MP U$' denotes $\exists i (X \in [B_i])$.
\end{itemize}
\end{Definition}

Equivalently, we could take a code for a $\Sigma^{0, Z}_1$ set to be an index $e$ for $W_e^Z = \dom(\Phi_e^Z)$.  Typically, we write `$\MP U$ is a $\Sigma^{0, Z}_1$ set' and `$(\MP{U}_n)_{n \in \Nb}$ is a uniform sequence of $\Sigma^{0, Z}_1$ sets' instead of `$\MP U$ codes a $\Sigma^{0, Z}_1$ set' and `$(\MP{U}_n)_{n \in \Nb}$ codes a uniform sequence of $\Sigma^{0, Z}_1$ sets.'

Now we define Lebesgue measure for $\mbf{\Sigma^0_1}$ sets.
\begin{Definition}[$\rca$]{\ }
\begin{itemize}
\item Let $B \subseteq 2^{<\Nb}$ be finite.  Define $\mu(B) = \sum_{\sigma \in \widehat B}2^{-|\sigma|}$, where\\
$\widehat B= \{\sigma \in B : \text{$\sigma$ has no proper initial segment in $B$}\}$.

\medskip

\item Let $\MP U = (B_i)_{i \in \Nb}$ be a $\mbf{\Sigma^0_1}$ set.
\begin{itemize}
\item The \emph{Lebesgue measure} of $\MP U$ is $\mu(\MP U) = \lim_m \mu(\bigcup_{i \leq m}B_i)$ (if the limit exists).
\item For $r \in \Rb$, `$\mu(\MP U) > r$' denotes $\exists m (\mu(\bigcup_{i \leq m}B_i) > r)$.
\item For $r \in \Rb$, `$\mu(\MP U) \leq r$' denotes $\forall m (\mu(\bigcup_{i \leq m}B_i) \leq r)$.
\end{itemize}
\end{itemize}
\end{Definition}

We warn the reader that $\rca$ is not strong enough to prove that the limit defining $\mu(\MP U)$ exists for every $\mbf{\Sigma^0_1}$ set $\MP U$, which is why we must give explicit definitions for $\mu(\MP U) > r$ and $\mu(\MP U) \leq r$.  In $\rca$, the assertion $\mu(\MP U) = r$ includes the implicit assertion that the limit exists.

Now we can define the notions of algorithmic randomness that we consider.  We start with Martin-L\"of randomness.
\begin{Definition}[$\rca$]{\ }
\begin{itemize}
\item A \emph{$\Sigma^{0,Z}_1$-test} (or \emph{Martin-L\"of test relative to $Z$}) is a uniform sequence $(\MP{U}_n)_{n \in \Nb}$ of $\Sigma^{0,Z}_1$ sets such that $\forall n(\mu(\MP{U}_n) \leq 2^{-n})$.

\medskip

\item $X$ is \emph{$1$-random relative to $Z$} (or \emph{Martin-L\"of random relative to $Z$}) if $X \notin \bigcap_{n \in \Nb}\MP{U}_n$ for every $\Sigma^{0,Z}_1$-test $(\MP{U}_n)_{n \in \Nb}$.

\medskip

\item $\mlr$ is the statement ``for every $Z$ there is an $X$ that is $1$-random relative to $Z$.''
\end{itemize}
\end{Definition}

A notion stronger than Martin-L\"of randomness is weak $2$-randomness.  A weak $2$-test generalizes the concept of a Martin-L\"of test in that one no longer bounds the rate at which the measures of the components of the test converge to $0$.

\begin{Definition}[$\rca$]{\ }
\begin{itemize}
\item A \emph{weak $2$-test relative to $Z$} is a uniform sequence $(\MP{U}_n)_{n \in \Nb}$ of $\Sigma^{0,Z}_1$ sets such that $\lim_n \mu(\MP{U}_n) = 0$, meaning that $\forall k \exists n (\forall m > n)(\mu(\MP{U}_m) \leq 2^{-k})$.

\medskip

\item $X$ is \emph{weakly $2$-random relative to $Z$} if $X \notin \bigcap_{n \in \Nb}\MP{U}_n$ for every weak $2$-test $(\MP{U}_n)_{n \in \Nb}$ relative to $Z$.

\medskip

\item $\wtr$ is the statement ``for every $Z$ there is an $X$ that is weakly $2$-random relative to $Z$.''
\end{itemize}
\end{Definition}

Even stronger is $2$-randomness, which we define here  in terms of $\Sigma^{0, Z}_2$-tests.  We must first define $\mbf{\Sigma}^0_2$ sets and their measures.

\begin{Definition}[$\rca$]\label{def-Sigma2Class}{\ }
\begin{itemize}
\item A \emph{code for a $\mbf{\Sigma}^0_2$ set} is a sequence $(T_i)_{i \in \Nb}$ of subtrees of $2^{<\Nb}$.

\medskip

\item A \emph{code for a $\Sigma^{0,Z}_2$ set} is a code for a $\mbf{\Sigma}^0_2$ set $(T_i)_{i \in \Nb}$ with $(T_i)_{i \in \Nb} \leqT Z$.

\medskip

\item A \emph{code for a uniform sequence of $\Sigma^{0,Z}_2$ sets} is a double-sequence $(T_{n,i})_{n,i \in \Nb} \leqT Z$, where $(T_{n,i})_{i \in \Nb}$ is a code for a $\Sigma^{0,Z}_2$ set for each $n \in \Nb$.

\medskip

\item If $\MP W = (T_i)_{i \in \Nb}$ codes a $\mbf{\Sigma}^0_2$ set, then $X \in \MP W$ denotes $\exists i \forall n (X \rst n \in T_i)$.
\end{itemize}
\end{Definition}

Again, we write `$\MP W$ is a $\Sigma^{0,Z}_2$ set,' etc.\ instead of `$\MP W$ codes a $\Sigma^0_2$ set,' etc. 

\begin{Definition}[$\rca$]
Let $(T_i)_{i \in \Nb}$ be a sequence of trees that codes the $\mbf{\Sigma}^0_2$ set $\MP W$.  Let $q \in \Qb$.  Then `$\mu(\MP W) \leq q$' denotes $\forall i \exists n(2^{-n}|\bigcup_{j \leq i}T_j^n| \leq q)$.
\end{Definition}

\begin{Definition}[$\rca$; \cite{AvigadDeanRute}]{\ }
\begin{itemize}
\item  A \emph{$\Sigma^{0,Z}_2$-test} is a uniform sequence $(\MP{W}_n)_{n \in \Nb}$ of $\Sigma^{0,Z}_2$ sets such that $\forall n(\mu(\MP{W}_n) \leq 2^{-n})$.

\medskip

\item A set  $X$ is \emph{$2$-random relative to $Z$} if $X \notin \bigcap_{n \in \Nb}\MP{W}_n$ for every $\Sigma^{0,Z}_2$-test $(\MP{W}_n)_{n \in \Nb}$.

\medskip

\item $\mlrr{2}$ is the statement ``for every $Z$ there is an $X$ that is $2$-random relative to $Z$.''
\end{itemize}
\end{Definition}

\section{$\mlrr{2}$ and $C$-incompressibility over $\rca$}\label{sec-Inc}
  The statement $\mlrr 2$  (i.e., the existence of 2-random sets) is well-studied  in reverse mathematics.  For instance, in the presence of  the scheme $\bst$  (i.e., $\Sigma^0_2$-bounding), $\mlrr 2$ is equivalent to two formalizations of the dominated convergence theorem~\cite{AvigadDeanRute}, and it implies the rainbow Ramsey theorem for pairs and $2$-bounded colorings~\cites{CsimaMileti,ConidisSlaman}.
 
The goal of this section is to prove the equivalence between $2$-randomness and infinitely often $C$-incompressibility in $\rca$.  The difficulty in doing so is in avoiding arbitrary computations relative to $Z'$ for a set $Z$ (in the sense described the discussion of $\dnr$ in Section~\ref{sec-NonImp}).  In general, $\bst$ is required to show that if $\forall n(\Phi^{Z'}(n)\da)$, then for every $n$ the sequence $\sigma = \la \Phi^{Z'}(0), \dots, \Phi^{Z'}(n-1) \ra$ of the first $n$ values of $\Phi^{Z'}$ exists because this is essentially an arbitrary instance of bounded $\Delta^0_2$ comprehension, which is equivalent to $\Delta^0_2$ induction and hence to $\bst$.  Thus there is a danger of needing $\bst$ when working with computations relative to $Z'$ in $\rca$.  Furthermore, we wish to give proofs that are as concrete as possible, meaning that we prefer to work with objects that exists as sets in $\rca$, such as codes for tests, rather than with virtual objects defined by formulas, such as $Z'$ and sets computable from $Z'$.  This is one reason why we prefer the formalization of $2$-randomness relative to $Z$ in terms of $\Sigma^{0,Z}_2$-tests to the formalization in terms of $1$-randomness relative to $Z'$.

In $\rca$, we may define the standard optimal plain oracle machine $\Vb$ from an effective sequence of all Turing functionals in the usual way.  We may then discuss plain complexity relative to a set $Z$ by writing
\begin{itemize}
\item $C^Z(\sigma) \leq n$ if there is a $\tau$ such that $|\tau| \leq n$ and $\Vb^Z(\tau) = \sigma$ (and similarly with `$<$' in place of `$\leq$');

\medskip

\item $C^Z(\sigma) > n$ if $C^Z(\sigma) \nleq n$ (and similarly with `$\geq$' in place of `$>$');

\medskip

\item $C^Z(\sigma) = n$ if $n$ is least such that $C^Z(\sigma) \leq n$.
\end{itemize}

 $\rca$ proves, using $\iso$ in the guise of the $\Sigma^0_1$ least element principle, that for every $\sigma$ there is an $n$ such that $C(\sigma) = n$.  However, the function $\sigma \mapsto C(\sigma)$ is not computable and therefore   $\rca$ does not prove that this function exists.

\begin{Definition}[$\rca$]{\ }
\begin{itemize}
\item $X$ is \emph{eventually $C^Z$-compressible} if $\forall b \forall^\infty m (C^Z(X \rst m) < m-b)$.

\medskip

\item $X$ is \emph{infinitely often $C^Z$-incompressible} if $\exists b \exists^\infty m(C^Z(X \rst m) \geq m-b)$.

\medskip

\item $\cinc$ is the statement ``for every $Z$ there is an $X$ that is infinitely often $C^Z$-incompressible.''
\end{itemize}
\end{Definition}

We first show that $\rca \vdash \cinc \imp \mlrr{2}$.  The original proof that every infinitely often $C$-incompressible set is $2$-random makes use of prefix-free Kolmogorov complexity relative to $0'$, which we wish to avoid.  We give a proof that is similar to the one given in~\cite{BMSV}.  To do this, we use the following parameterized version of~\cite[Proposition~2.1.14]{NiesBook}, which says that if $\rho(p,n,\tau,Z)$ defines a sequence of $\Sigma^{0,Z}_1$ `sets' (`sets' in quotation because, in $\rca$, $\rho$ may not literally define a set) of requests indexed by $p$, then there is a machine $M$ such that, for every $p$, $M(p, \cdot)$ honors request set $p$.

\begin{Proposition}[$\rca$]\label{prop-MachineExist}
Let $Z$ be a set and suppose that $\rho(p,n,\tau,Z)$ is a $\Sigma^0_1$ formula such that, for each $p, n \in \Nb$, there are at most $2^n$ strings $\tau \in 2^{<\Nb}$ such that $\rho(p,n,\tau,Z)$ holds.  Then there is a machine $M$ such that
\begin{align*}
(\forall p,n \in \Nb)(\forall \tau \in 2^{<\Nb})[\rho(p,n,\tau,Z) \biimp (\exists \sigma \in 2^n)(M^Z(p,\sigma) = \tau)].
\end{align*}
\end{Proposition}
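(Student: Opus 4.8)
The plan is to construct the machine $M$ by a straightforward enumeration/diagonalization argument, but carried out carefully so that the bookkeeping is available as sets in $\rca$. First I would fix an effective enumeration of the $\Sigma^0_1$ relation $\rho$: since $\rho(p,n,\tau,Z)$ is $\Sigma^0_1$, write it as $\exists s\, \theta(p,n,\tau,Z,s)$ with $\theta$ being $\Delta^0_1$ (in fact $\Delta^0_0$ after collecting the witness $s$). For a fixed $Z$, the set of quadruples $\{(p,n,\tau,s) : \theta(p,n,\tau,Z,s)\}$ exists by $\Delta^0_1$ comprehension, so we can speak of ``the stage $s$ at which $\rho(p,n,\tau,Z)$ first appears.'' The idea is then: when a new request $(p,n,\tau)$ appears at stage $s$, we assign it a fresh string $\sigma \in 2^n$ (one not yet used for any request of the form $(p,n,\cdot)$) and declare $M^Z(p,\sigma) = \tau$. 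The hypothesis that there are at most $2^n$ strings $\tau$ with $\rho(p,n,\tau,Z)$ guarantees that we never run out of strings in $2^n$, so every request gets served; conversely, $M^Z(p,\sigma) = \tau$ only ever happens because $(p,n,\tau)$ was a genuine request, giving the biconditional.

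The key steps, in order: (1) rewrite $\rho$ with an explicit $\Sigma^0_1$ witness and form, via $\Delta^0_1$ comprehension relative to $Z$, the set of ``activated'' requests together with their activation stages; (2) define, by primitive recursion on the stage $s$ (which is available in $\rca$ since the recursion is $\Sigma^0_1$-definable and bounded at each step — at stage $s$ only requests with all coordinates below some bound depending on $s$ can have appeared), the finite injury-free assignment $s \mapsto (\text{finite partial function } M_s^Z)$, where at each stage we process the requests activated exactly at that stage in some canonical order and allot each the least unused $\sigma \in 2^n$ for the relevant $n$; (3) set $M^Z(p,\sigma)\da = \tau$ iff $\exists s\, (M_s^Z(p,\sigma) = \tau)$, which is again $\Sigma^{0,Z}_1$, hence $M$ is a legitimate (partial) oracle machine with index obtainable in $\rca$; (4) verify the forward direction of the biconditional (every request is served), which needs the counting hypothesis plus a $\Sigma^0_1$ (or $\Pi^0_1$) induction to show that at the stage a request $(p,n,\tau)$ is activated, fewer than $2^n$ strings of length $n$ have been used for $p$, so a fresh one exists; (5) verify the reverse direction, which is immediate from the construction since $M^Z(p,\sigma)$ is only ever defined in response to an activated request.

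The main obstacle I anticipate is step (2)/(4): making sure the stage-by-stage construction genuinely produces a set (the graph of $M^Z$) in $\rca$ rather than merely a formula, and that the ``fresh string exists'' claim goes through with only $\Sigma^0_1$ induction rather than $\Sigma^0_2$ induction. The delicate point is that at stage $s$ we must know how many strings in $2^n$ have already been committed to requests with first coordinate $p$; this is a count over a finite, $\Delta^{0,Z}_1$-decidable set, so it exists and is provably bounded by $2^n$ using the counting hypothesis together with bounded $\Sigma^0_1$ comprehension and $\Sigma^0_1$ induction — all available in $\rca$. One must be a little careful that the canonical order in which requests activated at a common stage are processed is fixed in advance (say by Gödel number of $(p,n,\tau)$) so that the recursion is well-defined; with that in place the construction is primitive-recursive in $Z$ and the desired biconditional follows. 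No appeal to $\bst$ is needed because all the counting happens level-by-level in $2^{<\Nb}$ and the hypothesis caps each level's demand at its capacity $2^n$.
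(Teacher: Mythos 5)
Your construction is correct and is exactly the argument the paper has in mind: the paper's proof consists of the single remark that the statement is a straightforward extension (adding the parameter $p$ and the oracle $Z$) of the standard bounded-request machine construction in Nies's Proposition~2.1.14, which is precisely the stage-by-stage assignment of fresh length-$n$ strings that you carry out, with the same level-by-level counting showing that no appeal beyond $\Sigma^0_1$ induction and bounded comprehension is needed. So your proposal matches the paper's approach, just written out in full.
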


\begin{proof}
The proof is a straightforward (even in $\rca$) extension of the proof of~\cite[Proposition~2.1.14]{NiesBook}.
\end{proof}

\begin{Theorem}
\begin{align*}
\rca \vdash \forall X \forall Z(\text{$X$ is infinitely often $C^Z$-incompressible} \imp \text{$X$ is $2$-random relative to $Z$}).
\end{align*}
Hence $\rca \vdash \cinc \imp \mlrr{2}$.
\end{Theorem}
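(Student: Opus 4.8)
The plan is to establish the contrapositive of the displayed implication: working in $\rca$, fix $X$ and $Z$, assume $X$ is not $2$-random relative to $Z$, and prove that $X$ is eventually $C^Z$-compressible, which is literally the negation (via De Morgan and classical logic) of ``$X$ is infinitely often $C^Z$-incompressible.'' So fix a $\Sigma^{0,Z}_2$-test $(\MP{W}_n)_{n \in \Nb}$, coded by a $Z$-computable double array of trees $(T_{n,i})_{n,i \in \Nb}$, with $\mu(\MP{W}_n) \le 2^{-n}$ for all $n$ and $X \in \bigcap_n \MP{W}_n$. Since $X \in \MP{W}_n$ unwinds to $\exists i\,(X \in [T_{n,i}])$, for the $n$'s we will care about we may fix an $i^*_n$ with $X \in [T_{n,i^*_n}]$, so $X \rst m \in T^m_{n,i^*_n}$ for every $m$. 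Moreover, instantiating $\mu(\MP{W}_n) \le 2^{-n}$ at $i = i^*_n$ and using the monotonicity of $m \mapsto 2^{-m}\,\bigl|\bigcup_{j \le i}T^m_{n,j}\bigr|$ (a level-by-level counting fact, provable with $\iso$), we get an $m_1$ with $\bigl|\bigcup_{j \le i^*_n}T^m_{n,j}\bigr| \le 2^{m-n}$ for all $m \ge m_1$.

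The engine is Proposition~\ref{prop-MachineExist}, applied (with its parameter $p$ naming the test component and its ``level'' $\ell$) to the formula
\[
\rho(p,\ell,\tau,Z) \;:\equiv\; |\tau| = \ell + p \;\andd\; (\exists i)\Bigl[\tau \in T^{|\tau|}_{p,i} \;\andd\; \bigl|{\textstyle\bigcup_{j \le i}}T^{|\tau|}_{p,j}\bigr| \le 2^{\ell}\Bigr],
\]
which is $\Sigma^0_1$ since $(T_{n,i})_{n,i} \leqT Z$. The point is that for each $p$ and $\ell$ the strings $\tau$ with $\rho(p,\ell,\tau,Z)$ form $\bigcup_{i \in I}T^{\ell+p}_{p,i}$, where $I = \{i : |\bigcup_{j \le i}T^{\ell+p}_{p,j}| \le 2^{\ell}\}$ is an initial segment of $\Nb$; since $\bigl(\bigcup_{j \le i}T^{\ell+p}_{p,j}\bigr)_i$ is an increasing chain of subsets of $2^{\ell+p}$, each of size $\le 2^{\ell}$, the union of the chain still has size $\le 2^{\ell}$ (a bounded pigeonhole argument, so available in $\rca$). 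Thus the cardinality hypothesis of Proposition~\ref{prop-MachineExist} holds, and we obtain a machine $M$ with $\rho(p,\ell,\tau,Z) \biimp (\exists \sigma \in 2^{\ell})(M^Z(p,\sigma) = \tau)$ for all $p,\ell,\tau$.

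Now fix $b$; we must exhibit an $m_0$ with $C^Z(X \rst m) < m - b$ for all $m \ge m_0$. Since $\rca$ proves that $n - \lambda(n) \to \infty$, where $\lambda(n) = O(\log n)$ is the length of a standard self-delimiting code for $n$, choose $n_0$ with $n_0 - \lambda(n_0) > b + c$, where $c$ is the (fixed, $n_0$-independent) constant appearing in the bound below. Let $i^* = i^*_{n_0}$ and take $m_1$ as above for this $n_0$. For every $m \ge m_1$, setting $\ell = m - n_0$ and taking $i = i^*$ witnesses $\rho(n_0, m-n_0, X \rst m, Z)$, so some $\sigma \in 2^{m-n_0}$ has $M^Z(n_0,\sigma) = X \rst m$. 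Prefixing $\sigma$ with a self-delimiting code for $n_0$ and running $M$ through a fixed auxiliary machine $M'$ that reads off $n_0$ and then simulates $M$, we get $C^Z(X \rst m) \le C^Z_{M'}(X \rst m) + c \le (m - n_0) + \lambda(n_0) + c < m - b$, where $c$ depends only on $M$, $M'$, and the coding (hence only on the fixed data $X,Z$, the test, and $b$), not on $n_0$ or $m$. This gives the claim with $m_0 = m_1$, completing the proof of the displayed implication. The ``Hence'' is then immediate: given $Z$, apply $\cinc$ to get an $X$ that is infinitely often $C^Z$-incompressible, and apply what was just proved to conclude $\mlrr{2}$.

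I expect the main obstacle to be precisely the design of $\rho$. The na\"ive approach encodes into the description both the component index $n_0$ and the tree index $i^*_{n_0}$ of the $\Pi^{0,Z}_1$ piece of $\MP{W}_{n_0}$ on which $X$ lies, paying $\log i^*_{n_0}$ bits; this is fatal because $i^*_{n_0}$ is uncontrolled. Pushing the search over tree indices inside $\rho$ --- while capping $|\bigcup_{j \le i}T^{|\tau|}_{p,j}|$ so that the per-level request bound $2^\ell$ is preserved --- removes this cost, leaving only $\lambda(n_0) = O(\log n_0)$ to be beaten by $n_0$. A secondary point, but one to watch everywhere, is that because we only ever manipulate the $Z$-computable trees $T_{n,i}$ directly (never computations relative to $Z'$), all the induction used is $\Sigma^0_1$ --- for the monotonicity of the level-measures, for the chain-union cardinality bound, for the hypothesis-checking of Proposition~\ref{prop-MachineExist}, and for $n - \lambda(n) \to \infty$ --- so $\bst$ is never needed, which is the whole point of this section.
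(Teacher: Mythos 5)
Your proof is correct, and it follows the paper's overall strategy: argue the contrapositive, feed a parameterized $\Sigma^{0,Z}_1$ request predicate into Proposition~\ref{prop-MachineExist}, and beat the cost of encoding the test-component index. Where you genuinely diverge is in how the per-level request bound is secured without consulting $Z'$. The paper first replaces the trees by their unions so they are nested in $i$, then precomputes (uniformly in $Z$) checkpoint levels $p < m_{p,0} < m_{p,1} < \cdots$ at which $|T_{p+1,i}^{m_{p,i}}| \leq 2^{m_{p,i}-p}$, and its $\rho$ requires $m_{p,i} \leq p+n < m_{p,i+1}$; the $2^n$ bound then comes from counting extensions of level-$m_{p,i}$ strings, and the header is the unary prefix $0^p{}^\smf 1$ paired with component $2^p+1$ to make the gain exponential in the header length. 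You instead fold the measure witness directly into $\rho$ as the cap $|\bigcup_{j \leq i}T^{|\tau|}_{p,j}| \leq 2^{\ell}$, get the cardinality hypothesis from downward-closedness of the good indices plus the increasing-chain count (which, as you note, is a bounded argument; making "the union of the chain has at most $2^{\ell}$ elements" precise uses $\bso$, available in $\rca$), use monotonicity of $m \mapsto 2^{-m}|\bigcup_{j\leq i}T^m_{n,j}|$ so that one witness level $m_1$ for the fixed $i^* = i^*_{n_0}$ serves all $m \geq m_1$, and pay only a logarithmic self-delimiting header for $n_0$. This buys you a proof with no auxiliary checkpoint sequences and no nesting normalization, at the price of the chain-counting and monotonicity lemmas; the paper's version buys a more elementary counting step at the price of the extra uniform construction of the $m_{p,i}$ and the unary-header/exponential-component bookkeeping. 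Two trivial touch-ups: take $m_0 = \max(m_1, n_0)$ so that $\ell = m - n_0 \geq 0$, and your constant $c$ depends only on $M$ and the simulator $M'$ (hence on the test code), not on $b$.
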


\begin{proof}
We work in $\rca$ and show that for every $X$ and $Z$, if $X$ is not $2$-random relative to $Z$, then $X$ is eventually $C^Z$-compressible.

Suppose $X$ and $Z$ are sets where $X$ is not $2$-random relative to $Z$.  Let $(T_{n,i})_{n,i \in \Nb} \leqT Z$ be a code for a $\Sigma^{0, Z}_2$-test $(\MP{U}_n)_{n \in \Nb}$ capturing $X$.  Assume that $(\forall n,i,j)(i \leq j \imp T_{n,i} \subseteq T_{n,j})$ by replacing each $T_{n,j}$ by $\bigcup_{i \leq j}T_{n,i}$ if necessary.  Note that $(\forall n,i)(\mu([T_{n,i}]) \leq 2^{-n})$ because $(\MP{U}_n)_{n \in \Nb}$ is a $\Sigma^{0, Z}_2$-test.

Recall that for a tree $T$, $T^m = \{\sigma \in T : |\sigma| = m\}$ denotes the $m$\textsuperscript{th} level of $T$.  To compress the initial segments of $X$, define a parameterized $\Sigma^{0, Z}_1$ set of requests as follows.  First, uniformly define auxiliary sequences $p < m_{p,0} < m_{p,1} < m_{p,2} < \cdots$ for each $p \in \Nb$ so that $(\forall p,i)(|T_{p+1,i}^{m_{p,i}}| \leq 2^{m_{p,i}-p})$, which is possible because $(\forall p,i)(\mu([T_{p+1,i}]) \leq 2^{-(p+1)})$.   Let
\begin{align*}
\rho(p, n, \tau, Z) = \exists i[(\tau \in T_{p+1,i}^{p+n}) \andd (m_{p,i} \leq p+n < m_{p,i+1})].
\end{align*}
If $\rho(p, n, \tau, Z)$ holds, then it must be that $\tau \in T_{p+1,i}^{p+n}$ for the $i$ such that $m_{p,i} \leq p+n < m_{p,i+1}$.  There are at most $2^{m_{p,i} - p} \leq 2^n$ such $\tau$ by the choice of $m_{p,i}$.  Thus for every $p,n \in \Nb$, there are at most $2^n$ strings $\tau \in 2^{<\Nb}$ such that $\rho(p,n,\tau,Z)$ holds.  Thus let $M$ be as in the conclusion of Proposition~\ref{prop-MachineExist} for this $\rho$.  Let $N$ be a machine such that $(\forall p \in \Nb)(\forall \sigma \in 2^{<\Nb})(N^Z({0^p}^\smf{1}^\smf\sigma) = M^Z(2^p, \sigma))$ (here we warn the reader that in $N$, `$0^p$' is the string of $0$'s of length $p$, but in $M$, `$2^p$' is the number $2^p$).  Let $c \in \Nb$ be a constant such that $\forall \tau(C^Z(\tau) \leq C_N^Z(\tau) + c)$.

We show that $\forall b \forall^\infty m(C^Z(X \rst m) < m-b)$ by showing that $\forall b \forall^\infty m(C_N^Z(X \rst m) < m-b-c)$.  Fix $b \in \Nb$.  Let $p$ be large enough so that $2^p > b+c+p+1$.  By the fact that $(\MP{U}_n)_{n \in \Nb}$ captures $X$, let $i_0$ be such that $(\forall i \geq i_0)(X \in [T_{2^p+1, i}])$.  Now consider any $n \geq  m_{2^p, i_0} - 2^p$.  Let $i \geq i_0$ be the $i$ such that $m_{2^p, i} \leq 2^p + n < m_{2^p, i+1}$.  Then $X \rst (2^p+n) \in T_{2^p+1, i}^{2^p + n}$ by the choice of $i_0$, so $\rho(2^p, n, X \rst (2^p + n), Z)$.  Thus there is a $\sigma \in 2^n$ such that $N^Z({0^p}^\smf{1}^\smf\sigma) = M^Z(2^p, \sigma) = X \rst (2^p + n)$.  Thus 
\begin{align*}
C_N^Z(X \rst (2^p + n)) \leq p + 1 + |\sigma| = p+1+n < 2^p + n - b - c.
\end{align*}
Therefore, if $m \geq m_{2^p, i_0}$, then $C_N^Z(X \rst m) < m-b-c$, as desired.  Thus $X$ is eventually $C^Z$-compressible.
\end{proof}

Next  we show the harder implication that $\rca \vdash \mlrr{2} \imp \cinc$.  The proof in Miller~\cite{MillerKRand} that every $2$-random set is infinitely often $C$-incompressible uses the familiar characterization of $2$-random sets in terms of prefix-free Kolmogorov complexity relative to $0'$, which we wish to avoid.  The proof in Nies, Stephan, and Terwijn~\cite{NiesStephanTerwijn} (see also Nies~\cite[Theorem 3.6.10]{NiesBook}) uses the so-called \emph{compression functions} and an application of the low basis theorem.  Though we did not pursue this approach in detail, we believe that it is possible to give a metamathematical version of the argument via compression functions in $\rca$ by following the proof of~\cite[Theorem 3.6.10]{NiesBook} and using a carefully formalized version of the low basis theorem, such as H\'{a}jek and Pudlak~\cite[Theorem~I.3.8]{HajekPudlak}.  This strategy could be implemented entirely (and quite delicately) in $\rca$, or it could be implemented by observing that the desired statement
\begin{align*}
\forall X \forall Z (\text{$X$ is $2$-random relative to $Z$} \imp \text{$X$ is infinitely often $C^Z$-incompressible})\label{fmla-2RandImpCinc}\tag{$\star$}
\end{align*}
is $\Pi^1_1$ and by appealing to conservativity.  A classic result of Harrington is that every countable model of $\rca$ can be extended to a countable model of $\wkl$ with the same first-order part (see~\cite[Theorem~IX.2.1]{SimpsonSOSOA}).  It follows that $\wkl$ is $\Pi^1_1$-conservative over $\rca$.  By combining the proof of Harrington's result with the proof of the formalized low basis theorem from H\'{a}jek and Pudlak, one may ensure that the sets in the extended model of $\wkl$ are all low in the sense of H\'{a}jek and Pudlak.  This yields that $\rca$ plus the statement ``every infinite binary-branching tree has a low infinite path'' is $\Pi^1_1$-conservative over $\rca$.  The conceptual advantage of the conservativity strategy over the directly-in-$\rca$ strategy is that one may assume that the desired compression function actually exists as a second-order object instead of merely being defined by some formula.  We thank Keita Yokoyama for many helpful comments concerning metamathematical approaches to showing that $\rca \vdash$~\eqref{fmla-2RandImpCinc}.

We prefer a concrete argument given in $\rca$ to the metamathematical approach outlined above, and   find it interesting that a concrete argument is possible.  Our argument is a formalization of the proof presented in Bauwens~\cite{Bauwens}, which itself is based on the proof in Bienvenu et al.~\cite{BMSV}.  The proof in~\cite{Bauwens} proceeds via the following  covering result.

\begin{Theorem}[{Conidis \cite[Theorem~3.1]{Conidis}}]\label{thm-Conidis}
Let $q \in \Qb$, and let $(\MP{U}_i)_{i \in \omega}$ be a uniform sequence of $\Sigma^0_1$ sets such that $  \mu(\MP{U}_i) \leq q$ for each $i$.  For every $p \in \Qb$ with $p > q$, there is a $\Sigma^{0,0'}_1$ set $\MP V$ such that $\mu(\MP V) \leq p$ and $ \bigcap_{i \geq N}\MP{U}_i \subseteq \MP V$ for each $N$.  Furthermore, $\MP V$ is produced uniformly from an index $e$ such that $\Phi_e = (\MP{U}_i)_{i \in \Nb}$ as well as $q$ and $p$.
\end{Theorem}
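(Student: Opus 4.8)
The plan is to read the conclusion as a covering statement for a $\liminf$ class and to build $\MP{V}$ in stages under a geometric measure budget. Since $\bigcap_{i \geq N}\MP{U}_i$ is nondecreasing in $N$, the requirement ``$\bigcap_{i \geq N}\MP{U}_i \subseteq \MP{V}$ for each $N$'' amounts to $L \subseteq \MP{V}$, where $L := \bigcup_N \bigcap_{i \geq N}\MP{U}_i$ is the class of $X$ lying in all but finitely many $\MP{U}_i$; as $\mu\bigl(\bigcap_{i \geq N}\MP{U}_i\bigr) \leq \mu(\MP{U}_N) \leq q$ for every $N$, this increasing union has measure at most $q$. Writing $\MP{A}_N := \bigcap_{i \geq N}\MP{U}_i = \bigcap_k \MP{C}_{N,k}$ with $\MP{C}_{N,k} := \bigcap_{i=N}^{N+k}\MP{U}_i$ (uniformly $\Sigma^0_1$, nonincreasing in $k$, of measure $\leq q$, and equipped with a computable increasing clopen approximation), the real issue is that the obvious cover $\bigcup_N \MP{U}_N \supseteq L$ can have measure far above $p$, so the $\MP{U}_N$ must be replaced by heavily overlapping pieces of bounded total measure.

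Fix rationals $\varepsilon_N > 0$ with $\sum_N \varepsilon_N = p - q$. I would build, computably in $0'$, an increasing chain $\emptyset = \MP{V}_{-1} \subseteq \MP{V}_0 \subseteq \MP{V}_1 \subseteq \cdots$ of ($0'$-c.e.) open sets with $\MP{A}_N \subseteq \MP{V}_N$ and, crucially, $\mu(\MP{V}_N \setminus \MP{V}_{N-1}) \leq \mu(\MP{A}_N \setminus \MP{V}_{N-1}) + \varepsilon_N$, enumerating $\MP{V} := \bigcup_N \MP{V}_N$ by, at stage $N$, adding the uncovered part of a large finite intersection $\MP{C}_{N,k_N}$. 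Because $\MP{A}_{N-1} \subseteq \MP{V}_{N-1}$ and the $\MP{A}_N$ increase, $\MP{A}_N \setminus \MP{V}_{N-1} \subseteq \MP{A}_N \setminus \MP{A}_{N-1}$, and the sets $\MP{A}_N \setminus \MP{A}_{N-1}$ are pairwise disjoint with union $L$; telescoping then gives $\mu(\MP{V}) = \sup_N \mu(\MP{V}_N) \leq \sum_N \mu(\MP{A}_N \setminus \MP{A}_{N-1}) + \sum_N \varepsilon_N \leq \mu(L) + (p-q) \leq p$, while $\MP{V} \supseteq \bigcup_N \MP{A}_N = L$. Since every object named in the construction comes uniformly from an index for $(\MP{U}_i)_{i}$ together with $q$ and $p$, the ``Furthermore'' clause is immediate.

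\emph{The step I expect to be the main obstacle is the effective choice of the cutoffs $k_N$ from $0'$.} One wants $\mu(\MP{C}_{N,k_N} \setminus \MP{V}_{N-1})$ within $\varepsilon_N$ of its limiting value $\mu(\MP{A}_N \setminus \MP{V}_{N-1}) = \inf_k \mu(\MP{C}_{N,k} \setminus \MP{V}_{N-1})$; a predicate of the form ``$\mu(\MP{C}_{N,k} \setminus \MP{V}_{N-1}) < r$'' with $r$ rational is of sufficiently low complexity to be recognized by $0'$ and does hold for large $k$ with $r$ descending toward that limit, but the limit itself is in general only $0''$-computable and comes with no recursive modulus of convergence, so $0'$ cannot certify directly that a candidate cutoff makes the overshoot small (a crude rational threshold like $q + \varepsilon_N$ is recognizable but useless, since iterating it would cost $\Theta(Nq)$ rather than the required $\mu(L)$); there is the additional delicacy that the measures entering stage $N$ already depend on the $0'$-enumerated earlier stages, so the monitoring must be arranged so as not to escape $0'$. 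Circumventing all of this is exactly the content of Conidis's theorem, via a careful $0'$-oracle construction tracking the rate at which the measures $\mu(\MP{C}_{N,k})$ settle; in this paper the cleanest route is to cite it, and the remaining bookkeeping (prefix-free presentations, monotonicity of the $\MP{V}_N$, uniformity) is routine.
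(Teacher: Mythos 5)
Your measure bookkeeping is fine (the telescoping bound via the disjoint pieces $\MP{A}_N \setminus \MP{A}_{N-1}$ and the error budget $\sum_N \varepsilon_N = p-q$ would indeed give $\mu(\MP{V}) \leq p$), but the proof has a genuine gap exactly at the step you flag, and your resolution of it is circular: choosing the cutoffs $k_N$ so that $\mu(\MP{C}_{N,k_N} \setminus \MP{V}_{N-1})$ is within $\varepsilon_N$ of its limiting value \emph{is} the theorem, and you dispose of it by saying that circumventing it ``is exactly the content of Conidis's theorem'' and should be cited. As you yourself observe, the limit $\mu(\MP{A}_N \setminus \MP{V}_{N-1})$ is in general only $0''$-computable and comes with no modulus, so the greedy construction as described cannot be carried out relative to $0'$; no device is offered that makes the stage-$N$ choice recognizable by $0'$, and the crude threshold you mention does not work. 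So what is written is a correct reduction of the theorem to its own key difficulty, not a proof.

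The mechanism that actually closes this gap (in Conidis, in Bauwens, and in the formalized variant proved in this paper as Lemma~\ref{lem-2MLRimpINCHelper}) never estimates a limiting measure at all. Fix rationals $q < q_0 < q_1 < \cdots < p$. One first proves (Claim~\ref{claim-LongSeq}(i)) that for every $a$ and every rational $r > q$ there is a $b > a$ with $\mu(\MP{U}_{a \ldots b} \cup \MP{U}_i) \leq r$ for \emph{every} $i > b$: otherwise each counterexample $i$ forces, by the inclusion--exclusion Lemma~\ref{lem-IncExcl}, the measure of the running finite intersection $\MP{U}_{a \ldots b}$ to drop by at least $r-q$, which can happen at most $q/(r-q)$ times. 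The condition on $b$ is $\Pi^0_1$, so $0'$ can find the least such $b$; iterating yields $b_0 < b_1 < \cdots$ and $\MP{V} = \bigcup_k \MP{U}_{k \ldots b_k}$, and the maintained invariant $\mu(\MP{S}_{\la b_0,\dots,b_k \ra} \cup \MP{U}_i) \leq q_k$ for all $i > b_k$ bounds $\mu(\MP{V})$ by $\sup_k q_k < p$ outright, since each new block $\MP{U}_{(k+1) \ldots b_{k+1}}$ is contained in $\MP{U}_{b_{k+1}}$ with $b_{k+1} > b_k$; coverage is immediate because $\bigcap_{i \geq N} \MP{U}_i \subseteq \MP{U}_{N \ldots b_N}$. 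So the missing idea is the inclusion--exclusion finite-descent argument that turns the stage-by-stage choice into a $\Pi^0_1$ search, replacing your ``cover $\MP{A}_N$ up to $\varepsilon_N$'' target, which is not $0'$-certifiable. Note also that for the use made of the result here one needs precisely the formalized Lemma~\ref{lem-2MLRimpINCHelper}, where $\MP{V}$ is presented by a $\Sigma^{0,Z}_2$ code built from trees indexed by least ``good'' sequences, since in $\rca$ one avoids explicit computations relative to $Z'$ altogether.
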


Assuming  Theorem~\ref{thm-Conidis}, we sketch the argument that no eventually $C$-compressible set $X$ is $2$-random.  Suppose that $\forall^\infty i (C(X \rst i) < i-b)$ for each $b$.  We want to find a $\Sigma^{0,0'}_1$-test capturing $X$.  Define a double-sequence $(\MP{U}_{b,i})_{b,i \in \omega}$ of $\Sigma^0_1$ sets by taking $\MP{U}_{b,i} = \{Y : C(Y \rst i) < i-b\}$.  Then $ \mu(\MP{U}_{b,i}) \leq 2^{-b}$ for each $b$ and $i$.  By Theorem~\ref{thm-Conidis}, we obtain a $\Sigma^{0,0'}_1$-test $(\MP{V}_b)_{b \in \omega}$ such that $ \bigcap_{i \geq N}\MP{U}_{b+1, i} \subseteq \MP{V}_b$ for each $b$ and $N$.  The test $(\MP{V}_b)_{b \in \omega}$ captures $X$ because for each $b$ there is an $N$ such that $(\forall i > N)(C(X \rst i) < i-(b+1))$, and hence   $  X \in \bigcap_{i \geq N}\MP{U}_{b+1, i}$, which is contained in $ \MP{V}_b$.  Thus $X$ is not $2$-random.

  The proof of Theorem~\ref{thm-Conidis} in~\cites{Bauwens}  makes use of an  inclusion-exclusion principle for open sets provable in $\rca$.  We include the standard proof to in order convince the reader that it can be carried out in $\rca$.
\begin{Lemma}[$\rca$]\label{lem-IncExcl}
Let $\MP A, \MP B \subseteq 2^{\Nb}$ be open sets, and let $a,b,r \in \Qb^{\geq 0}$ be such that $\mu(\MP A) \leq a$, $\mu(\MP B) \leq b$ and $\mu(\MP{A} \cup \MP{B}) > r$.  Then $\mu(\MP{A} \cap \MP{B}) \leq a+b-r$.
\end{Lemma}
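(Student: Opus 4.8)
The plan is to reduce the claim to the exact inclusion--exclusion identity $\mu(\alpha\cup\beta)+\mu(\alpha\cap\beta)=\mu(\alpha)+\mu(\beta)$ for \emph{finite} unions of cylinders; that identity, together with monotonicity of $\mu$ on finite sets of strings, is pure bounded combinatorics and so is available in $\rca$.

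First I would fix explicit codes. Write $\MP A=(A_i)_{i\in\Nb}$ and $\MP B=(B_i)_{i\in\Nb}$, and for finite $D,E\subseteq 2^{<\Nb}$ set $D\wedge E = \{\sigma\in D:(\exists\tau\in E)(\tau\subseteq\sigma)\}\cup\{\tau\in E:(\exists\sigma\in D)(\sigma\subseteq\tau)\}$, a finite set, computable from $D,E$, with $[D\wedge E]=[D]\cap[E]$. Then $\MP A\cup\MP B$ is the open set coded by $(A_i\cup B_i)_{i\in\Nb}$ and $\MP A\cap\MP B$ the one coded by $(D_n)_{n\in\Nb}$, where $D_{\la i,j\ra}=A_i\wedge B_j$. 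Next I would record two facts, each proved in $\rca$ using only bounded $\Sigma^0_1$ comprehension and $\iso$ (everything in sight is bounded): (i) if $D,E$ are finite string sets with $[D]\subseteq[E]$, then $\mu(D)\le\mu(E)$; and (ii) $\mu(D\cup E)+\mu(D\wedge E)=\mu(D)+\mu(E)$. For (ii) I would pass to a level $N$ exceeding every string length occurring in $D\cup E$, form $S_D=\{\rho\in 2^N:(\exists\sigma\in D)(\sigma\subseteq\rho)\}$ and likewise $S_E$, check $\mu(D)=2^{-N}|S_D|$, $S_{D\cup E}=S_D\cup S_E$ and $S_{D\wedge E}=S_D\cap S_E$, and conclude from the finite cardinality law $|S\cup T|+|S\cap T|=|S|+|T|$.

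With these in place the main argument is short. Fix $m$; I must show $\mu(\bigcup_{n\le m}D_n)\le a+b-r$. By the hypothesis $\mu(\MP A\cup\MP B)>r$ fix $m_0$ with $\mu(\bigcup_{i\le m_0}(A_i\cup B_i))>r$, put $k=\max(m,m_0)$, $\alpha=\bigcup_{i\le k}A_i$, $\beta=\bigcup_{i\le k}B_i$. The hypotheses on $\MP A$ and $\MP B$ give $\mu(\alpha)\le a$ and $\mu(\beta)\le b$; since $k\ge m_0$, fact (i) gives $\mu(\alpha\cup\beta)\ge\mu(\bigcup_{i\le m_0}(A_i\cup B_i))>r$; and since $k\ge m$ and $i,j\le\la i,j\ra$, each $D_n$ with $n\le m$ equals $A_i\wedge B_j$ for some $i,j\le k$, so $[D_n]=[A_i]\cap[B_j]\subseteq[\alpha]\cap[\beta]=[\alpha\wedge\beta]$, whence $[\bigcup_{n\le m}D_n]\subseteq[\alpha\wedge\beta]$ and, by fact (i) again, $\mu(\bigcup_{n\le m}D_n)\le\mu(\alpha\wedge\beta)$. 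Finally fact (ii) gives $\mu(\alpha\wedge\beta)=\mu(\alpha)+\mu(\beta)-\mu(\alpha\cup\beta)\le a+b-r$, as required.

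The only genuinely delicate part is discharging facts (i) and (ii) honestly in $\rca$: forming the level-$N$ sets $S_D,S_E$ by bounded comprehension, proving $[D]=[S_D]$ and $[D\wedge E]=[D]\cap[E]$, and proving the finite cardinality inclusion--exclusion law by $\Sigma^0_1$ (indeed bounded) induction. None of this is hard, but since the point of including the lemma is precisely to certify that the classical proof survives in $\rca$, I would write these verifications out in full rather than gesture at them.
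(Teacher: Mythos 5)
Your proof is correct, but it takes a different route from the paper. You argue directly: fix a stage $m$ of the natural pairwise-meet code for $\MP A \cap \MP B$, pass to a common finite stage $k$ of the codes for $\MP A$, $\MP B$, and $\MP A \cup \MP B$, and then invoke the exact identity $\mu(D \cup E) + \mu(D \wedge E) = \mu(D) + \mu(E)$ for finite string sets, proved by counting at a single level $N$. The paper instead argues by contradiction: assuming $\mu(\MP A \cap \MP B) > a+b-r$, it extracts a clopen $\MP C \subseteq \MP A \cap \MP B$ with $a+b-r+2^{-(n+1)} \leq \mu(\MP C) \leq a+b-r+2^{-n}$, and then uses monotonicity and finite subadditivity, $\mu(\MP A \cup \MP B) \leq \mu(\MP A \setminus \MP C) + \mu(\MP B \setminus \MP C) + \mu(\MP C) \leq r$, to contradict $\mu(\MP A \cup \MP B) > r$. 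Both arguments are finitary and unproblematic in $\rca$; yours buys a completely explicit treatment of codes (which is the honest reading of the lemma, since in $\rca$ the statements $\mu(\MP A \cap \MP B) \leq q$ and $\mu(\MP A \cup \MP B) > r$ only have meaning relative to fixed codes, here the natural meet and union codes), it avoids the trimming step needed to produce the clopen $\MP C$ of intermediate measure, and it even yields the strict inequality $\mu(\MP A \cap \MP B) < a+b-r$; the paper's proof is shorter because it leans on standard measure-theoretic moves (subadditivity, set difference with a clopen set) stated at the level of classes. Two small simplifications you could note: the containment $A_i \wedge B_j \subseteq \alpha \wedge \beta$ already holds at the level of string sets (since $A_i \subseteq \alpha$ and $B_j \subseteq \beta$), so your fact (i) is only needed in its trivial string-inclusion form there; and if your pairing function did not satisfy $i,j \leq \la i,j \ra$, you could simply choose $k$ to bound the finitely many pairs with code at most $m$.
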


\begin{proof}
Suppose for a contradiction that $\mu(\MP{A} \cap \MP{B}) > a+b-r$.  Then $\mu(\MP{A} \cap \MP{B}) > a+b-r + 2^{-n}$ for some $n \in \Nb$, so there is a clopen $\MP C \subseteq \MP{A} \cap \MP{B}$ with $a+b-r + 2^{-(n+1)} \leq \mu(\MP C) \leq a+b-r + 2^{-n}$.  Let $\MP{A}_0 = \MP{A} \setminus \MP{C}$, and let $\MP{B}_0 = \MP{B} \setminus \MP{C}$.  Note that $\mu(\MP{A}_0) \leq a - (a+b-r + 2^{-(n+1)}) = r-b - 2^{-(n+1)}$ and that $\mu(\MP{B}_0) \leq b - (a+b-r + 2^{-(n+1)}) = r-a - 2^{-(n+1)}$.  Then
\begin{align*}
\mu(\MP{A} \cup \MP{B}) &\leq \mu(\MP{A}_0) + \mu(\MP{B}_0) + \mu(\MP C)\\
&\leq (r-b - 2^{-(n+1)}) + (r-a - 2^{-(n+1)}) + (a+b-r + 2^{-n}) = r. 
\end{align*}
This contradicts $\mu(\MP{A} \cup \MP{B}) > r$.
\end{proof}

Lemma~\ref{lem-2MLRimpINCHelper} formalizes Theorem~\ref{thm-Conidis} for use in $\rca$.  Notice that  the set  $\MP V$ produced is now a $\Sigma^{0,Z}_2$ set, rather than  a $\Sigma^{0,Z'}_1$ set.

\begin{Lemma}[$\rca$]\label{lem-2MLRimpINCHelper}
Let $Z$ be a set, let $q \in \Qb$, and let $(\MP{U}_i)_{i \in \Nb}$ be a uniform sequence of $\Sigma^{0,Z}_1$ sets such that $\forall i(\mu(\MP{U}_i) \leq q)$.  Then, for every $p \in \Qb$ with $p > q$, there is a $\Sigma^{0,Z}_2$ set $\MP V$ such that $\mu(\MP V) \leq p$ and $\forall N(\bigcap_{i \geq N}\MP{U}_i \subseteq \MP V)$.  Furthermore, $\MP V$ is produced uniformly from $Z$, an index $e$ such that $\Phi_e^Z = (\MP{U}_i)_{i \in \Nb}$, $q$, and $p$.
\end{Lemma}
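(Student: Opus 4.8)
The plan is to formalize in $\rca$ the proof of Conidis's covering theorem (Theorem~\ref{thm-Conidis}) as presented by Bauwens~\cite{Bauwens}, making two changes forced by the ambient weak system. First, a tail $\bigcap_{i\ge N}\MP U_i$ is a $\Pi^0_2$ class whose Lebesgue measure need not provably exist in $\rca$, so I would work throughout with the finite-window intersections $\MP U_{N,M}:=\bigcap_{i=N}^M\MP U_i$; these are $\Sigma^{0,Z}_1$ uniformly in $N,M$, they satisfy $\mu(\MP U_{N,M})\le q$, they decrease in $M$, and $\bigcap_{i\ge N}\MP U_i=\bigcap_M\MP U_{N,M}\subseteq\MP U_{N,M}$ for every $M$, so all the relevant measure estimates can be phrased using only the ``$\mu>r$'' and ``$\mu\le r$'' relations of Section~\ref{sec-RandFormal} for $\Sigma^{0,Z}_1$ sets, which are $\Sigma^0_1$ and $\Pi^0_1$ over $Z$, together with the rational measures of clopen sets. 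Second, rather than producing a $\Sigma^{0,Z'}_1$ set, I would produce $\MP V$ directly as a $\Sigma^{0,Z}_2$ set in the sense of Definition~\ref{def-Sigma2Class}: a $Z$-computable sequence of trees. This is the whole point of the lemma, since manipulating arbitrary objects computable from $Z'$ is exactly what usually requires $\bst$.

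The construction is a stagewise enumeration of $\MP V$ as an increasing union of clopen sets $O_0\subseteq O_1\subseteq\cdots$, with $\mu(O_k)\le p$ maintained at all stages, and organized so that for each $N$ the tail $\bigcap_{i\ge N}\MP U_i$ is eventually engulfed by $\MP V$. Following Bauwens, one processes the tails in order: when working on tail $N$ at stage $s$ one is allowed to add to $\MP V$ cylinders $[\sigma]$ for which $[\sigma]\subseteq\MP U_{N,M}[s]$ for the current window bound $M$ (such a $[\sigma]$ lies in $\bigcap_{i\ge N}\MP U_i$ provided this inclusion is never later refuted), always respecting the budget $p$, and one argues that no tail is ever permanently blocked. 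The key quantitative input — and the only place where one reasons about the measure of a union of two open sets — is the inclusion--exclusion bound of Lemma~\ref{lem-IncExcl}: since every window $\MP U_{N,M}$ has measure $\le q$ while the already-enumerated part of $\MP V$ may have measure approaching $p=q+\epsilon$, Lemma~\ref{lem-IncExcl} forces the intersection of $\MP U_{N,M}$ with the current $\MP V$ to be large, hence the part of the tail still missing from $\MP V$ to be small, so finishing tail $N$ costs only a small increment of measure; choosing the permitted increments to be a summable sequence of positive rationals with sum $<\epsilon$ keeps $\mu(\MP V)\le p$ throughout. All of this bookkeeping is carried out at the level of the $\Sigma^{0,Z}_1$ sets $\MP U_{N,M}$ and their clopen stage approximations $\MP U_{N,M}[s]$, so that every induction used is $\Sigma^0_1$/$\Pi^0_1$ induction, available in $\rca$.

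To present $\MP V$ as a $\Sigma^{0,Z}_2$ set I would observe that ``$X\in\MP V$'' is naturally a $\Sigma^{0,Z}_2$ statement — ``there is a finite transcript of the first several steps of the construction, each step's search witnessed correctly, whose resulting clopen set contains $X$'', which has a $\Sigma^0_2$ matrix over $Z$ — and then convert this formula into a $Z$-computable sequence of trees by the standard device: for each candidate transcript-plus-witness, build a tree that follows the corresponding clopen set as long as the $\Pi^0_1$ correctness condition of the transcript remains unrefuted and is frozen (hence contributes no infinite path) as soon as it is refuted. The bound ``$\mu(\MP V)\le p$'' then unwinds, via the definition of the measure of a $\mbf{\Sigma}^0_2$ set, to the assertion that every finite sub-union of the coding trees has measure $\le p$ at some level, which holds because these sub-unions are contained in the clopen sets $O_k$. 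Uniformity in $Z$, the index $e$, $q$, and $p$ is visible from the construction.

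I expect the main obstacle to be exactly the avoidance of $\bst$. The naive reading of the construction wants, for each $i$, a stage $s_i$ by which $\MP U_i[s_i]$ has ``settled'' (captured nearly all of the measure of $\MP U_i$), and assembling all the $s_i$ into a single sequence is an instance of bounded $\Delta^0_2$ comprehension, hence equivalent to $\bst$ — precisely the pitfall noted earlier in this section. The remedy is never to treat the $\MP U_i$ as a completed family: the construction visits the tails $N=0,1,2,\dots$ one at a time, and each individual step performs a single search depending only on the finitely much already built, so no infinite family of $Z'$-computations is ever collected. The delicate part of the proof is to verify, using only $\iso$, that these one-step searches always terminate (this is where Lemma~\ref{lem-IncExcl} and the hypothesis $\forall i(\mu(\MP U_i)\le q)$ enter) and that the measure bound and the covering property survive with this weak induction.
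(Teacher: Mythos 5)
Your framing matches the paper's in several respects: working with the finite windows $\MP{U}_{N \ldots M}=\bigcap_{i=N}^{M}\MP{U}_i$, producing $\MP V$ directly as a $\Sigma^{0,Z}_2$ set coded by trees that freeze once a $\Pi^{0,Z}_1$ correctness condition is refuted, and your ``transcript of the construction with each step's search witnessed'' is genuinely close to the paper's device of \emph{good} sequences. But the core of your construction, the budget argument, does not work as stated. You claim that because each window has measure $\leq q$ while the already-enumerated part of $\MP V$ may have measure approaching $p$, Lemma~\ref{lem-IncExcl} ``forces the intersection of $\MP{U}_{N,M}$ with the current $\MP V$ to be large,'' so that finishing tail $N$ is cheap. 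Lemma~\ref{lem-IncExcl} only bounds an intersection from \emph{above} when the union is known to be large; it gives no lower bound on the overlap of a fresh window with the previously enumerated clopen set, and nothing in your construction guarantees any such overlap: the current $\MP V$, of measure close to $p$, could be disjoint from $\MP{U}_{N,M}$, and then you are stuck --- adding the window blows the budget, and increasing $M$ need not shrink $\mu(\MP{U}_{N,M})$ below $p-\mu(\MP V)$ (its limit is the measure of the tail, about which you have no effective information). So ``no tail is ever permanently blocked'' is precisely the unproved point, and it is the mathematical heart of the lemma; your last paragraph explicitly defers it.

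What is missing is the extendibility lemma that the paper isolates as Claim~\ref{claim-LongSeq}(i): for every $a$ and every rational $r>q$ there is $b>a$ with $\mu(\MP{U}_{a\ldots b}\cup\MP{U}_i)\leq r$ for \emph{all} $i>b$. This is where Lemma~\ref{lem-IncExcl} really enters: if the union with some later $\MP{U}_i$ exceeds $r$, then intersecting the window with that $\MP{U}_i$ (measure $\leq q$) costs $r-q$ of measure, which can happen at most about $q/(r-q)$ times; formalizing this iteration uses $\iso$ and $\ipo$. Fixing rationals $q<q_0<q_1<\cdots<p$, one then gets, for each $n$, a sequence $b_0<\cdots<b_{n-1}$ with $\mu\bigl(\bigcup_{j\leq k}\MP{U}_{j\ldots b_j}\cup\MP{U}_i\bigr)\leq q_k$ for all $i>b_k$; since each new window $\MP{U}_{(k+1)\ldots b_{k+1}}$ is contained in a \emph{single} later $\MP{U}_i$, this invariant --- not near-exhaustion of a budget --- is what keeps the measure below $p$ while every tail gets covered. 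Finally, even with this in hand you cannot simply run the construction as a step-by-step recursion (that is the $\bst$ trap you yourself flag): the paper replaces the recursion by the lexicographically least such sequence of each length, characterized by a $\Pi^{0,Z}_1$ ``goodness'' predicate carrying explicit $\Sigma^{0,Z}_1$ witnesses for minimality, proves existence and uniqueness via the $\Pi^0_1$ least element principle (Claim~\ref{claim-UniqueSeq}), and then verifies $\mu(\MP V)\leq p$ for the tree code by finding, with bounded $\Sigma^0_1$/$\Pi^0_1$ comprehension and $\bso$, a level at which all trees indexed by non-good tuples have died. Your sketch gestures at the coding step but supplies neither the combinatorial lemma nor this measure verification, so there is a genuine gap.
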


\begin{proof}
The basic idea is to replace
\begin{align*}
\bigcup_{N \in \Nb} \bigcap_{i \geq N} \MP{U}_i
\end{align*}
by a superset of the form
\begin{align*}
\MP V =  \bigcup_{N \in \Nb} \bigcap_{i = N}^{b_N} \MP{U}_i
\end{align*}
 for an appropriate sequence $0 < b_0 < b_1 < \cdots$ because $\bigcup_{N \in \Nb} \bigcap_{i \geq N} \MP{U}_i$ is too complicated, whereas $\bigcup_{N \in \Nb} \bigcap_{i = N}^{b_N} \MP{U}_i$ is open (but in our case not effectively open; we produce a $\Sigma^0_2$ code for a set that happens to be open).

We want to identify a sequence $0 < b_0 < b_1 < \cdots$ that yields $\mu(\MP V) \leq p$.     The proof in~\cite{Bauwens} computes such a sequence from $0'$.  We wish to avoid explicit computations relative to $0'$ because the analysis of such computations has the danger of possibly requiring $\bst$.

First some notation.  For $a, b \in \Nb$ with $a < b$, let $\MP{U}_{a \ldots b} = \bigcap_{i=a}^b \MP{U}_i$.  For a sequence $\la b_0, b_1, \dots, b_{n-1} \ra$ with $0 < b_0 < b_1 < \cdots < b_{n-1}$, let $$\MP{S}_{\la b_0, \dots, b_{n-1} \ra} = \bigcup_{j < n} \MP{U}_{j \ldots b_j}.$$  We can fix codes for these sets:
\begin{itemize}
\item Let $(U_{i,s})_{i,s \in \Nb} \leqT Z$ denote the code for $(\MP{U}_i)_{i \in \Nb}$ so that, for all $i$, $\bigcup_{s \in \Nb}[U_{i,s}] = \MP{U}_i$.

\medskip

\item From $(U_{i,s})_{i,s \in \Nb}$, define codes $(U_{a \ldots b, s})_{s \in \Nb} \leqT Z$ uniformly for all $a,b \in \Nb$ with $a < b$ so that $\bigcup_{s \in \Nb}[U_{a \ldots b, s}] = \MP{U}_{a \ldots b}$.

\medskip

\item Similarly, for every sequence $\la b_0, b_1, \dots, b_{n-1} \ra$ with $0 < b_0 < b_1 < \cdots < b_{n-1}$, uniformly define codes $(S_{\la b_0, \dots, b_{n-1} \ra, s})_{s \in \Nb} \leqT Z$ so that $\bigcup_{s \in \Nb}[S_{\la b_0, \dots, b_{n-1} \ra, s}] = \MP{S}_{\la b_0, \dots, b_{n-1} \ra}$.
\end{itemize}

Notice that if $\la b_0, b_1, \dots, b_{n-1} \ra$ is a sequence with $0 < b_0 < b_1 < \cdots < b_{n-1}$, then $\bigcap_{i \geq N}\MP{U}_i \subseteq \MP{S}_{\la b_0, \dots, b_{n-1} \ra}$ holds when $N < n$.  

We would like to define $\MP V$ by taking the union of sets of the form $\MP{S}_{\la b_0, \dots, b_{n-1} \ra}$ for longer and longer sequences $\la b_0, \dots, b_{n-1} \ra$.  However, we also need to ensure that $\mu(\MP V) \leq p$.  Thus we need to find sequences $\la b_0, \dots, b_{n-1} \ra$ where $\MP{S}_{\la b_0, \dots, b_{n-1} \ra}$ has small measure and that  additionally are extendible to longer sequences $\la b_0, \dots, b_{m-1} \ra \supseteq \la b_0, \dots, b_{n-1} \ra$ where $\MP{S}_{\la b_0, \dots, b_{m-1} \ra}$ also has small measure.    Part~(ii) of the following claim says that this is possible: there are sequences $\la b_0, \dots, b_{n-1} \ra$ of arbitrary length such that for every subsequence $\la b_0, \dots, b_k \ra$ with  $k < n$, the set $\MP{S}_{\la b_0, \dots, b_k \ra} \cup \MP{U}_i$ has small measure for all $i > b_k$. The main technical work to prove the claim is in its Part~(i).

\begin{Claim}\label{claim-LongSeq}{\ }
\begin{itemize}
\item[(i)]  For every $a \in \Nb$ and every $r \in \Qb$ with $r > q$, there is  $b > a$ such that $\mu(\MP{U}_{a \ldots b} \cup \MP{U}_i) \leq r$ for each $i>b$.

\medskip

\item[(ii)]  For every $n \in \Nb$ and every $q_0, \dots, q_{n-1} \in \Qb$ with $q < q_0 < \cdots < q_{n-1}$, there is a sequence $\la b_0, b_1, \dots, b_{n-1} \ra$ with $0 < b_0 < b_1 < \cdots < b_{n-1}$ such that $\mu(\MP{S}_{\la b_0, \dots, b_k \ra} \cup \MP{U}_i) \leq q_k)$ for each $ k < n$ and each $  i > b_k$.
\end{itemize}
\end{Claim}

\begin{proof}[Proof of Claim]
(i) Suppose for a contradiction that $(\forall b > a)(\exists i > b)(\mu(\MP{U}_{a \ldots b} \cup \MP{U}_i) > r)$.  Consider for a moment any $b > a$ and a $c > b$ such that $\mu(\MP{U}_{a \ldots b} \cup \MP{U}_c) > r$.  We assume that $\mu(\MP{U}_c) \leq q$, so if $\mu(\MP{U}_{a \ldots b}) \leq x$ for some $x \in \Qb$, then $\mu(\MP{U}_{a \ldots c}) \leq \mu(\MP{U}_{a \ldots b} \cap \MP{U}_c) \leq x - (r-q)$ by Lemma~\ref{lem-2MLRimpINCHelper}.  By iterating this argument sufficiently many times, we find a contradictory $c$ such that $\mu(\MP{U}_{a \ldots c}) < 0$.

To implement this argument formally, consider the formula
\begin{align*}
\varphi(k) = (\exists \la b_0, \dots, b_k \ra \in \Nb)\left[(a < b_0) \andd (\forall i < k)(b_i < b_{i+1}) \andd (\forall i < k)(\mu(\MP{U}_{a \ldots b_i} \cup \MP{U}_{b_{i+1}}) > r)\right].
\end{align*}
The formula $\varphi(k)$ is $\Sigma^{0, Z}_1$ because `$\mu(\MP{U}_{a \ldots b_i} \cup \MP{U}_{b_{i+1}}) > r$' is $\Sigma^{0,Z}_1$.  Thus we may conclude $\forall k \varphi(k)$ by $\iso$ and the assumption $(\forall b > a)(\exists i > b)(\mu(\MP{U}_{a \ldots b} \cup \MP{U}_i) > r)$.  Now choose $k > q/(r-q)$ and, by $\varphi(k)$, let $a < b_0 < b_1 < \cdots < b_k$ be such that $(\forall i < k)(\mu(\MP{U}_{a \ldots b_i} \cup \MP{U}_{b_{i+1}}) > r)$.  Then, for any $x \in \Qb$ and $i < k$, if $\mu(\MP{U}_{a \ldots b_i}) \leq x$, then $\mu(\MP{U}_{a \ldots b_{i+1}}) \leq x - (r-q)$ by Lemma~\ref{lem-IncExcl} and the assumption $\mu(\MP{U}_{b_{i+1}}) \leq q$.  By $\ipo$, we can then conclude that $(\forall i \leq k)[\mu(\MP{U}_{a \ldots b_i}) \leq q - i(r-q)]$.  This is a contradiction because for $i = k$ it gives
\begin{align*}
\mu(\MP{U}_{a \ldots b_k}) \leq q - k(r-q) < q - q = 0.
\end{align*}

\noindent (ii)   Given $n$ and $q_0, \dots, q_{n-1} \in \Qb$ with $q < q_0 < \cdots < q_{n-1}$, let $b > n$ be such that $ \mu(\MP{U}_{n \ldots b} \cup \MP{U}_i) \leq q_0$ for each $i> b$.  Let $b_j = b+j$ for each $j < n$.  Then $(\forall k < n)(\MP{S}_{\la b_0, b_1, \dots, b_k \ra} \subseteq \MP{U}_{n \ldots b})$, so if $i > b_k \geq b$, then $\mu(\MP{S}_{\la b_0, b_1, \dots, b_k \ra} \cup \MP{U}_i) \leq \mu(\MP{U}_{n \ldots b} \cup \MP{U}_i) \leq q_0 \leq q_k$.
\end{proof}

Choose an increasing sequence of rationals $q_0 < q_1 < q_2 < \cdots$ inside the interval $(q, p)$.  We first illustrate some of the ideas behind constructing the code for $\MP V$ before diving into its full construction.  Claim~\ref{claim-LongSeq} part~(ii) tells us that it is possible to find arbitrary long sequences $b_0 < \cdots < b_{n-1}$ with $\mu(\MP{S}_{\la b_0, \dots, b_{n-1} \ra})$ under control that can be extended to even longer sequences with the corresponding measure still under control.  The conclusion of Claim~\ref{claim-LongSeq} part~(ii) is $\Pi^{0, Z}_1$, so we can use trees to identify sequences $b_0 < \cdots < b_{n-1}$ satisfying the conclusion for $q_0, \dots, q_{n-1}$ in the following way.  For each $t$ and $\la b_0, \dots, b_{n-1} \ra$ we can define a tree $T_{\la t, b_0, \dots, b_{n-1} \ra}$ such that
\begin{align*}
[T_{\la t, b_0, \dots, b_{n-1} \ra}] = 
\begin{cases}
[S_{\la b_0, \dots, b_{n-1} \ra, t}] & \text{if $\la b_0, \dots, b_{n-1} \ra$ satisfies Claim~\ref{claim-LongSeq} part~(ii)}\\
\emptyset & \text{otherwise}.
\end{cases}
\end{align*}
This is accomplished by adding to $T_{\la t, b_0, \dots, b_{n-1} \ra}$ all strings comparable with the finitely many strings in $S_{\la b_0, \dots, b_{n-1} \ra, t}$ until possibly noticing that $b_0 < \cdots < b_{n-1}$ does \textbf{not} satisfy Claim~\ref{claim-LongSeq} part~(ii) for $q_0, \dots, q_{n-1}$.

For a fixed $b_0 < \cdots < b_{n-1}$, we then have that
\begin{align*}
\bigcup_{t \in \Nb}&[T_{\la t, b_0, \dots, b_{n-1} \ra}] =\\
&\begin{cases}
\bigcup_{t \in \Nb}[S_{\la b_0, \dots, b_{n-1} \ra, t}] = \MP{S}_{\la b_0, \dots, b_{n-1} \ra} & \text{if $\la b_0, \dots, b_{n-1} \ra$ satisfies Claim~\ref{claim-LongSeq} part~(ii)}\\
\emptyset & \text{otherwise}.
\end{cases}
\end{align*}

Therefore, if we take the sequence $(T_i)_{i \in \Nb}$ of all trees $T_{\la t, b_0, \dots, b_{n-1} \ra}$ for all $t$, $n$, and $b_0 < \cdots < b_{n-1}$ as a code for the $\Sigma^0_2$ set $\MP V$, we get that
\begin{align*}
\MP V = \bigcup \{\MP{S}_{\la b_0, \dots, b_{n-1} \ra} : \text{$\la b_0, \dots, b_{n-1} \ra$ satisfies Claim~\ref{claim-LongSeq} part~(ii)}\}.
\end{align*}
In this case, we certainly have
\begin{align*}
\bigcup_{N \in \Nb} \bigcap_{i \geq N} \MP{U}_i \subseteq \MP V,
\end{align*}
but we have done nothing to help keep track of $\mu(\MP V)$.

So instead of having $\MP V$ contain $\MP{S}_{\la b_0, \dots, b_{n-1} \ra}$ for every $b_0 < \dots < b_{n-1}$ that satisfies Claim~\ref{claim-LongSeq} part~(ii), we want $\MP V$ to contain $\MP{S}_{\la b_0, \dots, b_{n-1} \ra}$ for exactly one $b_0 < \dots < b_{n-1}$ satisfying Claim~\ref{claim-LongSeq} part~(ii) for each $n$.  Moreover, if $n > m$, we want $\la b_0, \dots, b_{n-1} \ra$ to extend $\la b_0, \dots, b_{m-1} \ra$ so that $\MP{S}_{\la b_0, \dots, b_{n-1} \ra} \supseteq \MP{S}_{\la b_0, \dots, b_{m-1} \ra}$, which makes the measures of these sets easier to analyze.  To accomplish this and to give the full construction of the code for $\MP V$, we introduce the notion of a \emph{good} sequence.

Call a sequence $\la b_0, s_0, \dots, b_{n-1}, s_{n-1} \ra$ \emph{good} if $\la s_0, \dots, s_{n-1} \ra$ witnesses that $\la b_0, \dots, b_{n-1} \ra$ is the lexicographically least sequence of length $n$ satisfying Claim~\ref{claim-LongSeq} part~(ii) for $q_0, \dots, q_{n-1}$.  More formally, $\la b_0, s_0, \dots, b_{n-1}, s_{n-1} \ra$ is \emph{good} if
\begin{itemize}
\item[(i)] $0 < b_0 < b_1 < \cdots < b_{n-1}$;

\medskip

\item[(ii)] $(\forall k < n)(\forall i > b_k)(\mu(\MP{S}_{\la b_0, \dots, b_k \ra} \cup \MP{U}_i) \leq q_k)$; and

\medskip

\item[(iii)] for all $k < n$, if $b_k > b_{k-1} + 1$ (or if $b_0 > 1$ in the case $k = 0$), then $s_k = \la i, s \ra$ is such that $i > b_k - 1$ and $\mu([S_{\la b_0, \dots, b_{k-1}, b_k - 1 \ra, s}] \cup [U_{i,s}]) > q_k$.
\end{itemize}

Item (iii) says that if $b_k$ is not as small as possible (i.e., if $b_k > b_{k-1}+1$ or if $b_0 > 1$ in the case $k = 0$), then $s_k$ is a pair witnessing that $b_k$ cannot be chosen smaller and still satisfy Claim~\ref{claim-LongSeq} part~(ii).  It is in this sense that $\la s_0, \dots, s_{n-1} \ra$ witnesses that $\la b_0, \dots, b_{n-1} \ra$ is the lexicographically least sequence of length $n$ satisfying Claim~\ref{claim-LongSeq} part~(ii).  Notice that items~(i) and~(iii) are $\Delta^{0, Z}_1$ and that item~(ii) is $\Pi^{0, Z}_1$, so `$\la b_0, s_0, \dots, b_{n-1}, s_{n-1} \ra$ is good' is $\Pi^{0, Z}_1$.  So instead of defining trees $T_{\la t, b_0, \dots, b_{n-1} \ra}$ as above, we will define similar trees $T_{\la t, b_0, s_0, \dots, b_{n-1}, s_{n-1} \ra}$ so that
\begin{align*}
[T_{\la t, b_0, s_0, \dots, b_{n-1}, s_{n-1} \ra}] = 
\begin{cases}
[S_{\la b_0, \dots, b_{n-1} \ra, t}] & \text{if $\la b_0, s_0, \dots, b_{n-1}, s_{n-1} \ra$ is good}\\
\emptyset & \text{otherwise}.
\end{cases}
\end{align*}
However, before we do this, we show that the good sequences do indeed have their intended properties.
Note that if $\la b_0, s_0, \dots, b_{n-1}, s_{n-1} \ra$ is good and $k \leq n$, then $\la b_0, s_0, \dots, b_{k-1}, s_{k-1} \ra$ is also good.  By the following the good sequences   identify a unique infinite sequence $0 < b_0 < b_1 < \cdots$, which is the sequence we use to define $\MP V$.

\begin{Claim}\label{claim-UniqueSeq}
For each $n$ there is exactly one sequence $b_0 < \cdots < b_{n-1}$ for which there are $s_0, \dots, s_{n-1}$ such that $\la b_0, s_0, \dots, b_{n-1}, s_{n-1} \ra$ is good.
\end{Claim}

\begin{proof}[Proof of Claim] Fix $n$.  We first show that there is at most one sequence $b_0 < \cdots < b_{n-1}$ for which there are $s_0, \dots, s_{n-1}$ such that $\la b_0, s_0, \dots, b_{n-1}, s_{n-1} \ra$ is good.  Suppose that $\la b_0, s_0, \dots, b_{n-1}, s_{n-1} \ra$ and $\la b_0', s_0', \dots, b_{n-1}', s_{n-1}' \ra$ are both good and that (for the sake of argument) there is a $k < n$ such that $b_k < b_k'$ and $(\forall j < k)(b_j = b_j')$.  Then $(\forall i > b_k)(\mu(\MP{S}_{\la b_0, \dots, b_k \ra} \cup \MP{U}_i) \leq q_k)$ because $\la b_0, s_0, \dots, b_{n-1}, s_{n-1} \ra$ is good.  However, $\MP{S}_{\la b_0', \dots, b_{k-1}', b_k'-1 \ra} \subseteq \MP{S}_{\la b_0, \dots, b_k \ra}$ because $b_k \leq b_k'-1$ and $(\forall j < k)(b_j = b_j')$.  Therefore $(\forall i > b_k'-1)(\mu(\MP{S}_{\la b_0', \dots, b_{k-1}', b_k'-1 \ra} \cup \MP{U}_i) \leq q_k)$.  Thus there can be no $s_k' = \la i, s \ra$ such that $i > b_k' - 1$ and $\mu([S_{\la b_0', \dots, b_{k-1}', b_k' - 1 \ra, s}] \cup [U_{i,s}]) > q_k$.  Therefore $\la b_0', s_0', \dots, b_{n-1}', s_{n-1}' \ra$ is not good.

Now we show that there is at least one sequence $b_0 < \cdots < b_{n-1}$ for which there are $s_0, \dots, s_{n-1}$ such that $\la b_0, s_0, \dots, b_{n-1}, s_{n-1} \ra$ is good.  By Claim~\ref{claim-LongSeq} part~(ii) and the $\Pi^0_1$ least element principle, there is a least code $\la b_0, \dots, b_{n-1} \ra$ with $0 < b_0 < \cdots < b_{n-1}$ and such that $(\forall k < n)(\forall i > b_k)(\mu(\MP{S}_{\la b_0, \dots, b_k \ra} \cup \MP{U}_i) \leq q_k)$.  As usual, we tacitly assume that the coding of sequences is increasing in every coordinate.  Let $A$ be the set of $k < n$ such that $b_k > b_{k-1}+1$ (or $b_0 > 1$ in the case $k=0$).  Then, by the minimality of $\la b_0, \dots, b_{n-1} \ra$, $(\forall k \in A)(\exists i > b_k-1)(\mu(\MP{S}_{\la b_0, \dots, b_{k-1}, b_k-1 \ra} \cup \MP{U}_i) > q_k)$ and so $(\forall k \in A)(\exists i > b_k-1)(\exists s)(\mu([S_{\la b_0, \dots, b_{k-1}, b_k-1 \ra, s}] \cup [U_{i, s}]) > q_k)$.  Thus, for every $k \in A$ we may choose an $s_k = \la i, s \ra$ such that $i > b_{k-1}$ and $\mu([S_{\la b_0, \dots, b_{k-1}, b_k-1 \ra, s}] \cup [U_{i,s}]) > q_k$.  Then, letting $s_k = 0$ for all $k < n$ that are not in $A$, we see that $\la b_0, s_0, \dots, b_{n-1}, s_{n-1} \ra$ is good.  \end{proof}

We are now ready to define a code $(T_i)_{i \in \Nb}$ for the desired $\Sigma^{0,Z}_2$ set $\MP V$.  The idea is to arrange that $\MP V = \bigcup_{n \in \Nb} \MP{S}_{\la b_0, \dots, b_{n-1} \ra}$, for the sequence $b_0 < b_1 < \cdots$ identified above.

We view each $i$ as a sequence $i = \la t, b_0, s_0, \dots, b_{n-1}, s_{n-1} \ra$ and use the trees $T_{\la t, b_0, s_0, \dots, b_{n-1}, s_{n-1} \ra}$ to ensure that $\MP{S}_{\la b_0, \dots, b_{n-1} \ra} \subseteq \MP V$ when there are $s_0, \dots, s_{n-1}$ such that $\la b_0, s_0, \dots, b_{n-1}, s_{n-1} \ra$ is good.  Thus for every $\la t, b_0, s_0, \dots, b_{n-1}, s_{n-1} \ra \in \Nb$, we define $T_{\la t, b_0, s_0, \dots, b_{n-1}, s_{n-1} \ra}$ so that
\begin{align*}
[T_{\la t, b_0, s_0, \dots, b_{n-1}, s_{n-1} \ra}] = 
\begin{cases}
[S_{\la b_0, \dots, b_{n-1} \ra, t}] & \text{if $\la b_0, s_0, \dots, b_{n-1}, s_{n-1} \ra$ is good}\\
\emptyset & \text{otherwise},
\end{cases}
\end{align*}
as described above.

To define $T_{\la t, b_0, s_0, \dots, b_{n-1}, s_{n-1} \ra}$, first check that $\la b_0, s_0, \dots, b_{n-1}, s_{n-1} \ra$ satisfies items (i) and (iii) in the definition of `good.'  If the check fails, set $T_{\la t, b_0, s_0, \dots, b_{n-1}, s_{n-1} \ra} = \emptyset$.  If the check passes, then add to $T_{\la t, b_0, s_0, \dots, b_{n-1}, s_{n-1} \ra}$ all initial segments of the strings in $S_{\la b_0, \dots, b_{n-1} \ra, t}$, and then add all extensions of all strings in $S_{\la b_0, \dots, b_{n-1} \ra, t}$, level-by-level, until possibly seeing that $\la b_0, s_0, \dots, b_{n-1}, s_{n-1} \ra$ is not good by the failure of item~(ii) in the definition of `good.'  In the end, if $\la b_0, s_0, \dots, b_{n-1}, s_{n-1} \ra$ is good, then $T_{\la t, b_0, s_0, \dots, b_{n-1}, s_{n-1} \ra}$ consists of all strings comparable with some string in $S_{\la b_0, \dots, b_{n-1} \ra, t}$, so $[T_{\la t, b_0, s_0, \dots, b_{n-1}, s_{n-1} \ra}] = [S_{\la b_0, \dots, b_{n-1} \ra, t}]$.  Otherwise, $T_{\la t, b_0, s_0, \dots, b_{n-1}, s_{n-1} \ra}$ is finite, so we have that $[T_{\la t, b_0, s_0, \dots, b_{n-1}, s_{n-1} \ra}] = \emptyset$.

Formally, if $\la b_0, s_0, \dots, b_{n-1}, s_{n-1} \ra$ is not good by the failure of either (i) or (iii), then let $T_{\la t, b_0, s_0, \dots, b_{n-1}, s_{n-1} \ra} = \emptyset$.  Otherwise, let $T_{\la t, b_0, s_0, \dots, b_{n-1}, s_{n-1} \ra}$ be the set of all strings $\tau \in 2^{< \Nb}$ such that either
\begin{itemize}
\item $\tau \subseteq \sigma$ for some $\sigma \in S_{\la b_0, \dots, b_{n-1} \ra, t}$; or
\item $\tau \supseteq \sigma$ for some $\sigma \in S_{\la b_0, \dots, b_{n-1} \ra, t}$ and $(\forall k < n)(\forall i < |\tau|)(i > b_k \imp \mu([S_{\la b_0, \dots, b_k \ra, |\tau|}] \cup [U_{i, |\tau|}]) \leq q_k)$.
\end{itemize}
That is, in this case we add to $T_{\la t, b_0, s_0, \dots, b_{n-1}, s_{n-1} \ra}$ all extensions of strings in $S_{\la b_0, \dots, b_{n-1} \ra, t}$ until possibly reaching a level witnessing that $\la b_0, s_0, \dots, b_{n-1}, s_{n-1} \ra$ is not good by the failure of (ii).

Let $\MP V$ denote the $\Sigma^{0,Z}_2$ set defined by $(T_i)_{i \in \Nb}$ according to Definition~\ref{def-Sigma2Class}.  To show that $\mu(\MP V) \leq p$, we need to show that $\forall m \exists \ell(2^{-\ell}|\bigcup_{i \leq m}T_i^\ell| \leq p)$.  Fix $m \in \Nb$.  We find an $\ell$ large enough so that each string in $\bigcup_{i \leq m}T_i^\ell$ is an extension of some string in $\bigcup_{t \in \Nb}S_{\la \tilde{b}_0, \dots, \tilde{b}_{\tilde{n}-1} \ra, t}$ for a $\la \tilde{b}_0, \dots, \tilde{b}_{\tilde{n}-1} \ra$ for which there are $\tilde{s}_0, \dots, \tilde{s}_{\tilde{n}-1}$ such that $\la \tilde{b}_0, \tilde{s}_0, \dots, \tilde{b}_{\tilde{n}-1}, \tilde{s}_{\tilde{n}-1} \ra$ is good.  Once we have $\ell$, it follows that $2^{-\ell}|\bigcup_{i \leq m}T_i^\ell| \leq p$ because  then
\begin{align*}
2^{-\ell}\left|\bigcup_{i \leq m}T_i^\ell\right| \leq \mu(\MP{S}_{\la \tilde{b}_0, \dots, \tilde{b}_{\tilde{n}-1} \ra}) \leq q_{\tilde{n}-1} < p.
\end{align*}

To find $\ell$, first use bounded $\Pi^0_1$ comprehension to let $A$ be the set of all $\la t, b_0, s_0, \dots, b_{n-1}, s_{n-1} \ra \leq m$ such that $\la b_0, s_0, \dots, b_{n-1}, s_{n-1} \ra$ is good.  By bounded $\Sigma^0_1$ comprehension, let $B$ be the set of all $\la t, b_0, s_0, \dots, b_{n-1}, s_{n-1} \ra \leq m$ such that $\la b_0, s_0, \dots, b_{n-1}, s_{n-1} \ra$ is not good due to the failure of~(ii).  Then, for each $\la t, b_0, s_0, \dots, b_{n-1}, s_{n-1} \ra \in B$,
\begin{align*}
(\exists k < n)(\exists i > b_k)(\exists s)(\mu([S_{\la b_0, \dots, b_k \ra, s}] \cup [U_{i, s}]) > q_k).
\end{align*}
By $\bso$ there is a bound $\ell$ such that, for each $\la t, b_0, s_0, \dots, b_{n-1}, s_{n-1} \ra \in B$, there are a $k < n$, an $i$ with $b_k < i < \ell$, and an $s < \ell$ such that $\mu([S_{\la b_0, \dots, b_k \ra, s}] \cup [U_{i, s}]) > q_k$.  Therefore $T_{\la t, b_0, s_0, \dots, b_{n-1}, s_{n-1} \ra}^\ell = \emptyset$ for each $\la t, b_0, s_0, \dots, b_{n-1}, s_{n-1} \ra \in B$.  We have established that if $\la t, b_0, s_0, \dots, b_{n-1}, s_{n-1} \ra \leq m$ and $\la b_0, s_0, \dots, b_{n-1}, s_{n-1} \ra$ is not good, then $T_{\la t, b_0, s_0, \dots, b_{n-1}, s_{n-1} \ra}^\ell = \emptyset$.  Therefore $\bigcup_{i \leq m}T_i^\ell = \bigcup_{i \in A}T_i^\ell$.  Now, let $\tilde{n}$ be greatest such that some $\la \tilde{t}, \tilde{b}_0, \tilde{s}_0, \dots, \tilde{b}_{\tilde{n}-1}, \tilde{s}_{\tilde{n}-1} \ra$ is in $A$, and fix a witnessing $\la \tilde{b}_0, \dots, \tilde{b}_{\tilde{n}-1} \ra$.  By Claim~\ref{claim-UniqueSeq}, $\la \tilde{b}_0, \dots, \tilde{b}_{\tilde{n}-1}\ra$ is the unique sequence of length $\tilde{n}$ for which there are $\tilde{s}_0, \dots, \tilde{s}_{\tilde{n}-1}$ such that $\la \tilde{b}_0, \tilde{s}_0, \dots, \tilde{b}_{\tilde{n}-1}, \tilde{s}_{\tilde{n}-1} \ra$ is good.  Therefore, for any $\la t, b_0, s_0, \dots, b_{n-1}, s_{n-1} \ra \in A$, it must be that $n \leq \tilde{n}$ (by the maximality of $\tilde{n}$) and $(\forall j < n)(b_j = \tilde{b}_j)$.  We thus have that if $\la t, b_0, s_0, \dots, b_{n-1}, s_{n-1} \ra \in A$, then
\begin{align*}
[T_{\la t, b_0, s_0, \dots, b_{n-1}, s_{n-1} \ra}] = [S_{\la b_0, \dots, b_{n-1} \ra, t}] \subseteq \MP{S}_{\la b_0, \dots, b_{n-1} \ra} \subseteq \MP{S}_{\la \tilde{b}_0, \dots, \tilde{b}_{\tilde{n}-1} \ra}.
\end{align*}
However, $\mu(\MP{S}_{\la \tilde{b}_0, \dots, \tilde{b}_{\tilde{n}-1} \ra}) \leq q_{\tilde{n}-1}$.  So if we increase $\ell$ so as to be greater than the length of every string in every $S_{\la b_0, \dots, b_{n-1} \ra, t}$ for every $\la t, b_0, s_0, \dots, b_{n-1}, s_{n-1} \ra \in A$, we have that
\begin{align*}
2^{-\ell}\left|\bigcup_{i \leq m}T_i^\ell\right| = 2^{-\ell}\left|\bigcup_{i \in A}T_i^\ell\right| \leq \mu(\MP{S}_{\la \tilde{b}_0, \dots, \tilde{b}_{\tilde{n}-1} \ra}) \leq q_{\tilde{n}-1} < p
\end{align*}
as desired.

To see that $\bigcap_{i \geq N}\MP{U}_i \subseteq \MP V$ for each $N \in \Nb$, fix $N$ and suppose that $X \in \bigcap_{i \geq N}\MP{U}_i$.  Let $\la b_0, s_0, \dots, b_N, s_N \ra$ be good (which exists because by Claim~\ref{claim-UniqueSeq} there are good sequences of arbitrary length).  Then
\begin{align*}
X \in \bigcap_{i \geq N}\MP{U}_i \subseteq \MP{U}_{N \ldots b_N} \subseteq \MP{S}_{\la b_0, \dots, b_N \ra}.
\end{align*}
Let $t$ be such that $X \in [S_{\la b_0, \dots, b_N \ra, t}]$.  Then $X \in [T_{\la t, b_0, \dots, b_N \ra}] \subseteq \MP V$ as desired.

Finally, we observe that the sequence of trees $(T_i)_{i \in \Nb}$, and therefore the set $\MP V$, is produced with the required uniformity.
\end{proof}

\begin{Theorem}
\begin{align*}
\rca \vdash \forall X \forall Z(\text{$X$ is $2$-random relative to $Z$} \imp \text{$X$ is infinitely often $C^Z$-incompressible}).
\end{align*}
Hence $\rca \vdash \mlrr{2} \imp \cinc$.
\end{Theorem}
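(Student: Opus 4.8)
The plan is to prove the contrapositive inside $\rca$: if $X$ is eventually $C^Z$-compressible (the negation of being infinitely often $C^Z$-incompressible), then $X$ is not $2$-random relative to $Z$, witnessed by an explicitly constructed $\Sigma^{0,Z}_2$-test capturing $X$. This is the formalized version of the argument sketched after Theorem~\ref{thm-Conidis}, with Lemma~\ref{lem-2MLRimpINCHelper} playing the role that Conidis's covering theorem plays there.

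First I would fix, uniformly in $b,i \in \Nb$, a code for the $\Sigma^{0,Z}_1$ set $\MP{U}_{b,i} = \{Y : C^Z(Y \rst i) < i - b\}$; since `$C^Z(\sigma) < n$' is a $\Sigma^{0,Z}_1$ predicate, this gives a uniform double-sequence of $\Sigma^{0,Z}_1$ sets. The essential point is the measure bound $\mu(\MP{U}_{b,i}) \leq 2^{-b}$ for all $b$ and $i$ (in the sense of the formalized definition, i.e., the enumerated measure never exceeds $2^{-b}$): at any finite stage, the strings $\sigma \in 2^i$ enumerated into $\MP{U}_{b,i}$ are those for which a program of length $< i-b$ has so far halted with output $\sigma$, and mapping each such $\sigma$ to its least halting program injects them into $2^{<i-b}$; hence at most $2^{i-b}-1$ strings of length $i$ are ever enumerated, so the enumerated measure stays below $(2^{i-b}-1)2^{-i} < 2^{-b}$ (the case $i \leq b$ being trivial). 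This finite pigeonhole is available in $\rca$.

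Next, for each $b$ I would apply Lemma~\ref{lem-2MLRimpINCHelper} to the uniform sequence $(\MP{U}_{b+1,i})_{i \in \Nb}$ with $q = 2^{-(b+1)}$ and $p = 2^{-b}$ (so $p > q$), obtaining a $\Sigma^{0,Z}_2$ set $\MP{V}_b$ with $\mu(\MP{V}_b) \leq 2^{-b}$ and $\bigcap_{i \geq N}\MP{U}_{b+1,i} \subseteq \MP{V}_b$ for every $N$. Because Lemma~\ref{lem-2MLRimpINCHelper} produces $\MP{V}_b$ uniformly from $Z$, an index for $(\MP{U}_{b+1,i})_{i \in \Nb}$, $q$, and $p$ — all depending $Z$-computably on $b$ — the sequence $(\MP{V}_b)_{b \in \Nb}$ is a uniform sequence of $\Sigma^{0,Z}_2$ sets with $\mu(\MP{V}_b) \leq 2^{-b}$, i.e., a $\Sigma^{0,Z}_2$-test. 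To see this test captures $X$, fix $b$: since $X$ is eventually $C^Z$-compressible, unpacking $\forall^\infty$ yields an $N$ with $C^Z(X \rst m) < m-(b+1)$, hence $X \in \MP{U}_{b+1,m}$, for all $m > N$; thus $X \in \bigcap_{i \geq N+1}\MP{U}_{b+1,i} \subseteq \MP{V}_b$. As $b$ was arbitrary, $X \in \bigcap_{b \in \Nb}\MP{V}_b$, so $X$ is not $2$-random relative to $Z$. The deduction $\rca \vdash \mlrr{2} \imp \cinc$ is then immediate by relativization.

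The substantive difficulty has already been absorbed into Lemma~\ref{lem-2MLRimpINCHelper}, so the remaining obstacles are mild: one must verify carefully that $(\MP{U}_{b,i})_{b,i}$ is genuinely a uniform sequence of $\Sigma^{0,Z}_1$ sets of measure $\leq 2^{-b}$ in the precise sense of the formalized definitions (the counting step above), and that the outputs of Lemma~\ref{lem-2MLRimpINCHelper} for varying $b$ genuinely assemble into a single $Z$-computable double-sequence of trees coding a $\Sigma^{0,Z}_2$-test. Both go through without $\bst$, since no explicit computation relative to $Z'$ — nor any instance of bounded $\Delta^0_2$ comprehension — ever enters the argument; the only induction used is $\Sigma^0_1$-induction for the finite counting, and the verification that $X \in \MP{V}_b$ is carried out separately for each $b$ and requires no collection.
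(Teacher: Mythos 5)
Your proposal is correct and follows essentially the same route as the paper's own proof: define the uniformly $\Sigma^{0,Z}_1$ compressibility sets $\MP{U}_{b,i}$ with the finite counting bound $\mu(\MP{U}_{b,i}) \leq 2^{-b}$, feed them uniformly into Lemma~\ref{lem-2MLRimpINCHelper} to get the $\Sigma^{0,Z}_2$-test, and capture $X$ by unpacking eventual $C^Z$-compressibility at constant $b+1$. The only difference is bookkeeping (you shift the index inside the application of the lemma with $q=2^{-(b+1)}$, $p=2^{-b}$, whereas the paper shifts afterwards by passing to $(\MP{V}_{b+1})_{b\in\Nb}$), which is immaterial.
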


\begin{proof}
We work in $\rca$ and show that for every $X$ and $Z$, if $X$ is eventually $C^Z$-compressible, then $X$ is not $2$-random relative to $Z$.

Suppose $X$ and $Z$ are sets where $X$ is eventually $C^Z$-compressible.  That is,
\begin{align*}
\forall b \forall^\infty i(C^Z(X \rst i) < i-b).
\end{align*}
We show that there is a $\Sigma^{0, Z}_2$-test capturing $X$ and therefore that $X$ is not $2$-random relative to $Z$.

Define a double-sequence of open sets $(\MP{U}_{b,i})_{b, i \in \Nb} \leqT Z$ by defining $U_{b,i,s}$ so that, for each $b$ and $i$, $\bigcup_{s \in \Nb}U_{b,i,s}$ is an enumeration of all $\sigma \in 2^i$ such that $C^Z(\sigma) < i - b$.  Then $\forall b \forall i(\mu(\MP{U}_{b,i}) \leq 2^{-b})$ because there are at most $2^{i-b}$ strings $\sigma$ with $C^Z(\sigma) < i-b$.  Thus, for each fixed $b \in \Nb$, $(\MP{U}_{b,i})_{i \in \Nb}$ is a sequence of open sets such that $\forall i(\mu(\MP{U}_{b,i}) \leq 2^{-b})$.  Therefore, by the uniformity in Lemma~\ref{lem-2MLRimpINCHelper}, there is a sequence $(\MP{V}_b)_{b \in \Nb} \leqT Z$ of $\Sigma^{0, Z}_2$ sets such that $\forall b(\mu(\MP{V}_b) \leq 2^{-b+1})$ and $\forall N(\bigcap_{i \geq N}\MP{U}_{b, i} \subseteq \MP{V}_b)$.  The sequence $(\MP{V}_{b+1})_{b \in \Nb}$ is thus a $\Sigma^{0, Z}_2$ test.  We show that it captures $X$.  Given $b$, let $N$ be such that $(\forall i \geq N)[C^Z(X \rst i) < i-(b+1)]$.  Then $(\forall i \geq N)(X \in \MP{U}_{b+1, i})$.  Thus $X \in \bigcap_{i \geq N}\MP{U}_{b+1, i} \subseteq \MP{V}_{b+1}$ as desired.
\end{proof}

\begin{Corollary}
\begin{align*}
\rca \vdash \forall X \forall Z(\text{$X$ is infinitely often $C^Z$-incompressible} \biimp \text{$X$ is $2$-random relative to $Z$}).
\end{align*}
Hence $\cinc$ and $\mlrr{2}$ are equivalent over $\rca$.
\end{Corollary}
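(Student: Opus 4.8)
The plan is simply to combine the two theorems that immediately precede the corollary. The first theorem establishes, provably in $\rca$ and uniformly in $X$ and $Z$, the implication from infinitely often $C^Z$-incompressibility to $2$-randomness relative to $Z$; the second establishes the converse implication in the same uniform form. Conjoining these two $\forall X \forall Z$ statements yields the biconditional
\begin{align*}
\rca \vdash \forall X \forall Z(\text{$X$ is infinitely often $C^Z$-incompressible} \biimp \text{$X$ is $2$-random relative to $Z$}),
\end{align*}
with no additional reasoning required.

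For the second assertion — that $\cinc$ and $\mlrr 2$ are equivalent over $\rca$ — I would argue in $\rca$ as follows. Recall that $\cinc$ asserts that for every $Z$ there is an $X$ that is infinitely often $C^Z$-incompressible, while $\mlrr 2$ asserts that for every $Z$ there is an $X$ that is $2$-random relative to $Z$. Assume $\cinc$ and fix $Z$; pick an $X$ witnessing $\cinc$ for this $Z$. By the biconditional just established, this same $X$ is $2$-random relative to $Z$, and since $Z$ was arbitrary we obtain $\mlrr 2$. The implication $\mlrr 2 \imp \cinc$ is entirely symmetric, using the other direction of the biconditional. In fact these two implications are precisely the ``hence'' clauses already recorded at the ends of the two preceding theorems, so the corollary's second claim is literally their conjunction.

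There is no real obstacle in the corollary itself: all of the mathematical content lies in the two theorems, and in particular in the $\rca$-formalization of Conidis's covering result as Lemma~\ref{lem-2MLRimpINCHelper}, whose delicate proof via good sequences and $\Sigma^{0,Z}_2$ codes carries the weight of the section. The corollary is a packaging statement that collects those conclusions into a single biconditional on sets and a single equivalence of set-existence principles; the only point worth noting is that the relativized biconditional is genuinely uniform in $X$ and $Z$, which is exactly what licenses transferring a single witness $X$ between the two existence principles without invoking any further induction or collection.
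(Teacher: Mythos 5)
Your proposal is correct and is exactly the paper's route: the corollary is stated without further proof precisely because it is the conjunction of the two preceding theorems, with the ``hence'' clause following by transferring the single witness $X$ through the uniform biconditional, as you note. Nothing more is needed.
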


\section{Implications between major randomness notions within $\rca$}\label{sec-Implications}
Recall  the implications of randomness notions
\begin{center}   $2$-random $\Rightarrow$ weakly $2$-random  $\Rightarrow$  $1$-random  $\Rightarrow$ computably random $\Rightarrow$ Schnorr random.  \end{center}
  In this section, we show that the  implications between the corresponding principles  are   provable in $\rca$. We first provide the definitions of Schnorr and computable randomness.   For a   Schnorr test one requires   that the $n$\textsuperscript{th} component of the test has  measure exactly $2^{-n}$.

\begin{Definition}[$\rca$]
A \emph{Schnorr test relative to $Z$} is a Martin-L\"of test $(\MP{U}_n)_{n \in \Nb}$ relative to $Z$ where additionally the measures of the components of the test are uniformly computable from $Z$: $(\mu(\MP{U}_n))_{n \in \Nb} \leqT Z$.  $X$ is \emph{Schnorr random relative to $Z$} if $X \notin \bigcap_{n \in \Nb}\MP{U}_n$ for every Schnorr test $(\MP{U}_n)_{n \in \Nb}$ relative to $Z$.  $\sran$ is the statement ``for every $Z$ there is an $X$ that is Schnorr random relative to $Z$.''
\end{Definition}

For the purpose of defining Schnorr randomness relative to a set $Z$, we may assume that if $(\MP{U}_n)_{n \in \Nb}$ is a Schnorr test relative to $Z$, then $\mu(\MP{U}_n) = 2^{-n}$ for every $n$.  It is straightforward to implement the usual proof of this fact (see~\cite[Proposition~7.1.6]{DHBook}, for example) in $\rca$.

Computable randomness is defined in terms of computable betting strategies. They are called supermartingales in this context.

\begin{Definition}[$\rca$]{\ }
  A function $S \colon 2^{<\Nb} \imp \Qb^{\geq 0}$ is called a \emph{supermartingale} if 
\begin{align*}
(\forall \sigma \in 2^{<\Nb})(S(\sigma^\smf 0) + S(\sigma^\smf 1) \leq 2S(\sigma)),
\end{align*}
and it is called a \emph{martingale} if the defining property always holds with equality.
 A supermartingale $S$ \emph{succeeds} on a set $X$ if $\forall k \exists n(S(X \rst n) > k)$.
  $X$ is \emph{computably random relative to $Z$} if there is no supermartingale $S \leqT Z$ that succeeds on $X$.  $\cran$ is the statement ``for every $Z$ there is an $X$ that is computably random relative to $Z$.''
 
\end{Definition}

By \cite[Propositions~7.1.6 and~7.3.8]{NiesBook}, it makes no difference whether computable randomness relative to $Z$ is defined in terms of
\begin{itemize}
\item supermartingales $S \colon 2^{<\Nb} \imp \Qb^{\geq 0}$ that are $\leqT Z$;
\item supermartingales $S\colon 2^{<\Nb} \imp \Rb^{\geq 0}$ that are $\leqT Z$;
\item martingales $M \colon 2^{<\Nb} \imp \Qb^{\geq 0}$ that are $\leqT Z$; or
\item martingales $M \colon 2^{<\Nb} \imp \Rb^{\geq 0}$ that are $\leqT Z$.
\end{itemize}
It is straightforward to formalize these arguments in $\rca$.  In this setting, a function $S \colon 2^{<\Nb} \imp \Rb^{\geq 0}$ is coded by the corresponding sequence of values $(S(\sigma))_{\sigma \in 2^{<\Nb}}$.

\begin{Proposition}\label{prop-BasicRandImp}{\ }
\begin{itemize}
\item[(i)] $\rca \vdash \forall X \forall Z(\text{$X$ is $2$-random relative to $Z$} \imp \text{$X$ is weakly $2$-random relative to $Z$})$.

  Hence $\rca \vdash \mlrr{2} \imp \wtr$.

\medskip

\item[(ii)] $\rca \vdash \forall X \forall Z(\text{$X$ is weakly $2$-random relative to $Z$} \imp \text{$X$ is $1$-random relative to $Z$})$.  

Hence $\rca \vdash \wtr \imp \mlr$.

\medskip

\item[(iii)] $\rca \vdash \forall X \forall Z(\text{$X$ is $1$-random relative to $Z$} \imp \text{$X$ is computably random relative to $Z$})$.  

Hence $\rca \vdash \mlr \imp \cran$.

\medskip

\item[(iv)] $\rca \vdash \forall X \forall Z(\text{$X$ is computably random rel.\ to $Z$} \imp \text{$X$ is Schnorr random rel.\ to $Z$})$.

Hence $\rca \vdash \cran \imp \sran$.
\end{itemize}
\end{Proposition}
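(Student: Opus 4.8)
The plan is to prove each of the four implications (i)--(iv) by the standard classical arguments, checking that each goes through in $\rca$, i.e., that whenever we assert the existence of a set (a test, a martingale, a sequence of measures), we have exhibited it as a $\Delta^0_1$-definable object relative to the data at hand, and that any induction we invoke is at most $\iso$ (equivalently $\ipo$, bounded $\Sigma^0_1$/$\Pi^0_1$ comprehension, or $\bso$). For (i), the classical proof is: given a weak $2$-test $(\MP{U}_n)_{n \in \Nb}$ relative to $Z$, I would like to extract from it a $\Sigma^{0,Z}_2$-test, or more directly a $\Sigma^{0,Z}_2$ set capturing the same reals. Since $(\MP{U}_n)$ is a uniform sequence of $\Sigma^{0,Z}_1$ sets with $\lim_n \mu(\MP{U}_n) = 0$, each $\MP{U}_n$ is in particular a $\Sigma^{0,Z}_1$ set and hence (by padding a $\Sigma^0_1$ code with a trivial level structure) a $\Sigma^{0,Z}_2$ set. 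To build a $\Sigma^{0,Z}_2$-test, for each $k$ I want a single $\Sigma^{0,Z}_2$ set of measure $\le 2^{-k}$ containing $\bigcap_n \MP{U}_n$; I can take $\MP{W}_k = \MP{U}_{n_k}$, where $n_k$ is chosen (using the $\lim_n \mu(\MP{U}_n) = 0$ hypothesis, which is $\Pi^0_2$ in the parameters but whose witnesses $n_k$ I only need to \emph{exist}, not to form a $Z$-computable sequence) so that $(\forall m > n_k)(\mu(\MP{U}_m) \le 2^{-k})$. The subtlety: I need the sequence $(\MP{W}_k)_{k}$ to be a \emph{uniform} sequence of $\Sigma^{0,Z}_2$ sets, which requires a $Z$-computable choice of the indices $n_k$. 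Here I would instead define $\MP{W}_k$ directly as the $\Sigma^{0,Z}_2$ set $\{X : \exists n \, [(\forall m \le h(n))(\text{a $\Sigma^0_1$ approximation shows } \mu(\MP{U}_m) \text{ small}) \andd X \in \MP{U}_n]\}$ — more carefully, I would take the union over those $n$ such that the finite unions approximating $\mu(\MP{U}_m)$ stay $\le 2^{-k}$ for all $m$ up to some bound, which is a $\Pi^{0,Z}_1$ side condition, yielding exactly a $\Sigma^{0,Z}_2$ code uniformly in $k$; since some such $n$ exists, $\MP{W}_k$ contains $\bigcap_n \MP{U}_n \supseteq X$.

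For (ii), the argument is immediate: a $\Sigma^{0,Z}_1$-test (Martin-L\"of test relative to $Z$) is by definition a weak $2$-test relative to $Z$, since $\mu(\MP{U}_n) \le 2^{-n}$ certainly implies $\lim_n \mu(\MP{U}_n) = 0$ (the statement $\forall k \exists n (\forall m > n)(\mu(\MP{U}_m) \le 2^{-k})$ is witnessed by $n = k$). So if $X$ is captured by some ML-test relative to $Z$, it is captured by a weak $2$-test relative to $Z$, giving the contrapositive. This step requires essentially no induction.

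For (iii), I would use the classical fact that a set on which a computable (super)martingale succeeds is not ML-random, via Ville's/Schnorr's argument converting a martingale into an ML-test. Given a supermartingale $S \leqT Z$ with $S(\emptyset) \le 1$ (normalizing) that succeeds on $X$, define, for each $k$, the $\Sigma^{0,Z}_1$ set $\MP{U}_k = [\{\sigma : S(\sigma) > 2^k\}]$; by Kolmogorov's inequality (a finitary computation provable in $\rca$ by summing $S$-values over the antichain of minimal such $\sigma$, using the supermartingale inequality and $\iso$ on the length of strings) one has $\mu(\MP{U}_k) \le 2^{-k}$, so $(\MP{U}_k)_k$ is a $\Sigma^{0,Z}_1$-test, and since $S$ succeeds on $X$ we have $X \in \bigcap_k \MP{U}_k$. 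This gives the contrapositive of (iii). The one point to check is that the normalization of $S$ and the definition of $\MP{U}_k$ produce genuine codes $\leqT Z$ uniformly, which is routine since $S \leqT Z$.

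For (iv), I would use that every Schnorr test relative to $Z$ can be defeated by a computable-in-$Z$ martingale. Given a Schnorr test $(\MP{U}_n)_{n}$ relative to $Z$ with $(\mu(\MP{U}_n))_n \leqT Z$ (and, using the remark in the paper, with $\mu(\MP{U}_n) = 2^{-n}$), I would build a single martingale $M \leqT Z$ succeeding on every $X \in \bigcap_n \MP{U}_n$: for each $n$, the open set $\MP{U}_n$ of measure $2^{-n}$ induces a martingale $M_n \leqT Z$ with $M_n(\emptyset) = 1$ that reaches value $\ge 2^n$ along any $X \in \MP{U}_n$ (the "betting all on staying inside the enumerated clopen approximations" strategy — and here the exact computability of $\mu(\MP{U}_n)$ is exactly what lets this $M_n$ be a genuine \emph{martingale} with rational values $\leqT Z$, rather than merely a supermartingale built from an unknown measure); then $M = \sum_n 2^{-n} M_n$ is a martingale $\leqT Z$ (the series is finite at each string since the partial sums are bounded by $\sum 2^{-n} M_n(\sigma)$ and one estimates the tail), and $M$ succeeds on any $X$ in all the $\MP{U}_n$ because the $n$th summand alone contributes $\ge 2^{-n} \cdot 2^n = 1$ unboundedly... more carefully, one uses that $X \in \MP{U}_n$ for all $n$ to get $M(X \rst \ell) \ge \sum_{n \le N} 2^{-n} M_n(X \rst \ell)$ with each term eventually $\ge 1$, so $M(X\rst\ell) \to \infty$. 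This yields the contrapositive of (iv).

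The main obstacle I anticipate is part (i): ensuring that the witnesses $n_k$ needed to turn a weak $2$-test into a $\Sigma^{0,Z}_2$-test can be packaged into a genuinely \emph{uniform} $\Sigma^{0,Z}_2$ sequence without invoking $\bst$ — the safe route is to fold the selection of $n_k$ into the $\Sigma^0_2$ definition itself (an existential "there is an $n$ with a $\Pi^0_1$ certificate of smallness, and $X \in \MP{U}_n$"), so that no actual choice function need be asserted to exist. For (iii) the only delicate point is the finitary Kolmogorov inequality, which is a bounded sum manipulation needing only $\iso$; for (iv) it is checking that $M = \sum_n 2^{-n}M_n$ is well-defined as a $Z$-computable rational-valued martingale, which follows since at each $\sigma$ one has a convergent bound on the tail obtainable effectively from $Z$. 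All the induction used is $\Sigma^0_1$ (or equivalently $\Pi^0_1$/bounded comprehension/$\bso$), so none of these implications requires $\bst$.
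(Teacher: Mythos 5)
Parts (ii), (iii), and (iv) are fine and essentially what the paper does (the paper simply defers (iii) and (iv) to the standard martingale arguments in Nies's book, as you sketch; your only slip there is the claim in (iv) that the $M_n$ are rational-valued --- the conditional-measure martingales are naturally $\Rb^{\geq 0}$-valued, which is harmless because the formalized equivalence of the $\Qb$- and $\Rb$-valued definitions is available).

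The gap is in (i), which is the substantial part of the proposition. Your $\MP{W}_k$ is a union over \emph{all} $n$ satisfying a $\Pi^{0,Z}_1$ smallness certificate of the corresponding $\MP{U}_n$, and you verify only the capturing direction $\bigcap_n \MP{U}_n \subseteq \MP{W}_k$; you never verify $\mu(\MP{W}_k) \leq 2^{-k}$, and with your definition this bound is simply false in general: a union of many components each of measure $\leq 2^{-k}$ (and with a weak $2$-test there will typically be infinitely many qualifying indices) can have measure close to $1$. To get a genuine $\Sigma^{0,Z}_2$-test one must arrange that $\MP{W}_k$ is (essentially) the single set $\MP{U}_m$ for the \emph{least} $m$ with $\mu(\MP{U}_m) \leq 2^{-k}$. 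That requires building into the code not only a $\Pi^0_1$ certificate that $\mu(\MP{U}_m) \leq 2^{-k}$ (implemented, as in the paper, by trees that stop growing as soon as a stage shows the measure exceeds $2^{-k}$) but also a $\Sigma^0_1$ certificate --- a stage $s$ witnessing $\mu(U_{j,s}) > 2^{-k}$ for every $j < m$ --- so that contributions from non-minimal indices are killed. Moreover, in this paper's formalization the assertion $\mu(\MP{W}_k) \leq 2^{-k}$ is a statement about the levels of the coding trees, $\forall i \exists \ell\,(2^{-\ell}|\bigcup_{j \leq i} T_{k,j}^\ell| \leq 2^{-k})$, so even once $\MP{W}_k = \MP{U}_m$ set-theoretically one still has to show that all the ``wrong-guess'' trees below a given index die out by some common finite level; the paper's verification of this uses the $\Pi^0_1$ least element principle and $\bso$ (both available in $\rca$). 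Your instinct to fold the choice of witnesses into the $\Sigma^0_2$ definition rather than assert a choice function is the right one, but without the minimality device and the measure verification on the code, the construction does not yield a $\Sigma^{0,Z}_2$-test.
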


\begin{proof}
(i) To prove  that every $2$-random set is weakly $2$-random, one views $2$-randomness as $1$-randomness relative to $\emptyset'$ and shows that every weak $2$-test can be thinned to a Martin-L\"of test relative to $\emptyset'$ because $\emptyset'$ can uniformly compute the measures of $\Pi^0_1$ classes.  However, our formulation of $2$-randomness in $\rca$ is in terms of $\Sigma^0_2$-tests, so we need a version of this argument that works with $\Sigma^0_2$-tests instead of with Martin-L\"of tests relative to $0'$.

Let $(\MP{U}_n)_{n \in \Nb}$ be a weak $2$-test relative to $Z$, and let $(U_{n,i})_{n,i \in \Nb} \leqT Z$ be a code for $(\MP{U}_n)_{n \in \Nb}$.  For notational ease, assume that $\forall n \forall i(U_{n,i} \subseteq U_{n,i+1})$.  We define a double-sequence $(T_{n,i})_{n,i \in \Nb} \leqT Z$ of trees coding a $\Sigma^{0,Z}_2$-test $(\MP{W}_n)_{n \in \Nb}$ such that $\bigcap_{n \in \Nb} \MP{U}_n \subseteq \bigcap_{n \in \Nb} \MP{W}_n$.  The idea is to take $\MP{W}_n = \bigcup_{i \in \Nb}[T_{n, i}]$ to be $\MP{U}_m$ for the least $m$ such that $\mu(\MP{U}_m) \leq 2^{-n}$.  To do this, we view each $i$ as a triple $i = \la \sigma, m, s \ra$ and use the trees $T_{n, \la \sigma, m, s \ra}$ to ensure that $[\sigma] \subseteq \MP{W}_n$ when $[\sigma] \subseteq \MP{U}_m$ and $m$ is least such that $\mu(\MP{U}_m) \leq 2^{-n}$.

 To define $T_{n, \la \sigma, m, s \ra}$, first check that $\sigma \in U_{m,s}$ and that $s$ is large enough to witness that $\mu(\MP{U}_k) > 2^{-n}$ for all $k < m$.  If one of the  checks fails,   set $T_{n, \la \sigma, m, s \ra} = \emptyset$.  If both checks pass, then $[\sigma] \subseteq \MP{U}_m$, and possibly $m$ is least such that $\mu(\MP{U}_m) \leq 2^{-n}$.  In this case, start adding to $T_{n, \la \sigma, m, s \ra}$ all strings comparable with $\sigma$, level-by-level, until possibly seeing that $\mu(\MP{U}_m) > 2^{-n}$.  In the end, if $[\sigma] \subseteq \MP{U}_m$, $m$ is least such that $\mu(\MP{U}_m) \leq 2^{-n}$, and $s$ is large enough, then $T_{n, \la \sigma, m, s \ra}$ consists of all strings comparable with $\sigma$, so $[T_{n, \la \sigma, m, s \ra}] = [\sigma]$.  Otherwise, $T_{n, \la \sigma, m, s \ra}$ is finite, so $[T_{n, \la \sigma, m, s \ra}] = \emptyset$.  Therefore $\MP{W}_n = \MP{U}_m$.

Formally, for each $n$ and $\la \sigma, m, s \ra$, define $T_{n, \la \sigma, m, s \ra}$ so that
\begin{align*}
[T_{n, \la \sigma, m, s\ra}] = 
\begin{cases}
[\sigma] & \text{if $\sigma \in U_{m,s}$, $\mu(\MP{U}_m) \leq 2^{-n}$, and $(\forall k < m)(\mu(U_{k,s}) > 2^{-n})$}\\
\emptyset & \text{otherwise}.
\end{cases}
\end{align*}

To do this, if $\sigma \notin U_{m,s}$ or $(\exists k < m)(\mu(U_{k,s}) \leq 2^{-n})$, then let $T_{n, \la \sigma, m, s \ra} = \emptyset$.  Otherwise, let $T_{n, \la \sigma, m, s \ra}$ be the set of all strings $\tau \in 2^{< \Nb}$ such that $\tau$ is comparable with $\sigma$ (i.e., either $\tau \subseteq \sigma$ or $\tau \supseteq \sigma$) and $\mu(U_{m, |\tau|}) \leq 2^{-n}$.  Observe that $(T_{n,i})_{n,i \in \Nb} \leqT Z$ because $(U_{n,i})_{n,i \in \Nb} \leqT Z$.  Let $(\MP{W}_n)_{n \in \Nb}$ denote the uniform sequence of $\Sigma^{0,Z}_2$ sets defined by $(T_{n,i})_{n,i \in \Nb}$.

Fix $n$.  We show that there is a least $m$ such that $\mu(\MP{U}_m) \leq 2^{-n}$, that $\MP{U}_m \subseteq \MP{W}_n$, and that $\mu(\MP{W}_n) \leq 2^{-n}$.

To see that there is a least $m$ such that $\mu(\MP{U}_m) \leq 2^{-n}$, first observe that there is some $m$ such that $\mu(\MP{U}_m) \leq 2^{-n}$ because $\lim_m \mu(\MP{U}_m) = 0$ by the fact that $(\MP{U}_n)_{n \in \Nb}$ is a weak $2$-test.  Thus there is a least such $m$ by the $\Pi^0_1$ least element principle.  Henceforth $m$ always denotes the least $m$ such that $\mu(\MP{U}_m) \leq 2^{-n}$.

To show that $\MP{U}_m \subseteq \MP{W}_n$, we first show that there is a $t$ large enough to witness that $\mu(\MP{U}_k) > 2^{-n}$ for all $k < m$.  Once we have $t$, we argue that if $\sigma \in U_{m,s}$ for some $s$, then $\sigma \in U_{m,s}$ for some $s > t$ (as we assume that the $U_{m,s}$'s are nested), in which case $[\sigma] = [T_{n, \la \sigma, m, s \ra}] \subseteq \MP{W}_n$.  Formally, because $m$ is least, we have that $(\forall k < m)(\mu(\MP{U}_k) > 2^{-n})$ and hence that $(\forall k < m)(\exists t)(\mu(U_{k,t}) > 2^{-n})$.  By $\bso$, there is a fixed $t$ such that $(\forall k < m)(\mu(U_{k,t}) > 2^{-n})$.  Now, suppose that $Y \in \MP{U}_m$, and let $\sigma \subseteq Y$ and $s > t$ be such that $\sigma \in U_{m,s}$.  Then $[T_{n, \la \sigma, m, s \ra}] = [\sigma]$, so $Y \in [T_{n, \la \sigma, m, s \ra}] \subseteq \MP{W}_n$.  Thus $\MP{U}_m \subseteq \MP{W}_n$.

To show that $\mu(\MP{W}_n) \leq 2^{-n}$, we need to show that $\forall i \exists b (2^{-b}|\bigcup_{j \leq i}T_{n,j}^b| \leq 2^{-n})$.  Fix $i$.  We find a $b$ large enough so that each string in $\bigcup_{j \leq i}T_{n,j}^b$ is an extension of some string in $\bigcup_{s \in \Nb} U_{m,s}$.  This is achieved by choosing $b$ to be greater than $|\sigma|$ for every $\la \sigma, k, s \ra \leq i$ and greater than the length of every string in the finite trees $T_{n, \la \sigma, k, s \ra}$ with $\la \sigma, k, s \ra \leq i$.  Once we have $b$,  since $\mu(\MP{U}_m) \leq 2^{-n}$ it follows that $2^{-b}|\bigcup_{j \leq i}T_{n,j}^b| \leq 2^{-n}$.

As above, let $t$ be such that $(\forall k < m)(\mu(U_{k,t}) > 2^{-n})$.  Let $b > \max\{t, i\}$ (so that if $\la \sigma, k, s \ra \leq i$, then $b > |\sigma|$).  We show that this $b$ is large enough.  Consider a $\la \sigma, k, s \ra \leq i$.  If $k < m$, then $T_{n, \la \sigma, k, s \ra}^t = \emptyset$ because $\mu(U_{k,t}) > 2^{-n}$ by choice of $t$.  If $k > m$, then $T_{n, \la \sigma, k, s \ra} = \emptyset$ because $\mu(U_{m,s}) \leq 2^{-n}$.  So if $\tau \in T_{n, \la \sigma, k, s \ra}^b$ for $\la \sigma, k, s \ra \leq i$, then it must be that $k = m$, $\sigma \in U_{m,s}$, and $\tau \supseteq \sigma$.  Thus every string in $\bigcup_{j \leq i}T_{n,j}^b$ is an extension of some string in $\bigcup_{s \in \Nb} U_{m,s}$.  Therefore $2^{-b}|\bigcup_{j \leq i}T_{n,j}^b| \leq \mu(\MP{U}_m) \leq 2^{-n}$.

Now, suppose that $X$ is not weakly $2$-random relative to $Z$.  Then there is a weak $2$-test $(\MP{U}_n)_{n \in \Nb}$ relative to $Z$ that captures $X$.  By the above, there is a $\Sigma^{0,Z}_2$-test $(\MP{W}_n)_{n \in \Nb}$ such that $X \in \bigcap_{n \in \Nb} \MP{U}_n \subseteq \bigcap_{n \in \Nb} \MP{W}_n$.  Therefore $X$ is not $2$-random relative to $Z$.

\medskip
(ii) This is immediate from the definitions because every Martin-L\"of test relative to $Z$ is also a weak $2$-test relative to $Z$.
\medskip

(iii) See the proof of the (i)$\Imp$(iii) implication of~\cite[Proposition~7.2.6]{NiesBook}, which is straightforward to formalize in $\rca$. 

\medskip

(iv) See the proof of~\cite[Proposition~7.3.2]{NiesBook}, which is straightforward to formalize in $\rca$.  Note however that this proof makes use of $\Rb^{\geq 0}$-valued martingales.
\end{proof}

Not every Schnorr random set is computably random (see for example~\cite[Theorem 7.5.10]{NiesBook}).  However, it is provable in $\rca$ that if a Schnorr random set exists, then a computably random set exists.  Thus computable randomness and Schnorr randomness are equivalent as set-existence axioms.

\begin{Theorem}\label{thm-SRimpCR}
$\rca \vdash \sran \imp \cran$.  Thus $\sran$ and $\cran$ are equivalent over $\rca$.
\end{Theorem}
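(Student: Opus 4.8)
The plan is to prove the nontrivial direction $\rca \vdash \sran \imp \cran$ by formalizing, and relativizing, Miyabe's theorem that every Schnorr random set computes a computably random set. Concretely, I would establish
\[
\rca \vdash \forall Z \forall X\bigl(\text{$X$ is Schnorr random relative to $Z$} \imp (\exists Y \leqT X \oplus Z)\,(\text{$Y$ is computably random relative to $Z$})\bigr).
\]
Granting this, $\sran \imp \cran$ follows in $\rca$: given $Z$, apply $\sran$ to obtain an $X$ that is Schnorr random relative to $Z$, form $X \oplus Z$, and then form the witnessing $Y$ by $\Delta^0_1$ comprehension since $Y \leqT X \oplus Z$; this also yields the ``same oracle'' refinement mentioned in the introduction. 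Combined with Proposition~\ref{prop-BasicRandImp}(iv), which already gives $\rca \vdash \cran \imp \sran$, this proves the stated equivalence.

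For the construction I would work throughout with the martingale formulation of both notions (legitimate in $\rca$ by the discussion preceding Proposition~\ref{prop-BasicRandImp}), and with the fact that the potential gains of a martingale can be packaged into a Schnorr test of computable measure. Since there is provably no universal computable martingale, $Y$ cannot be obtained by diagonalizing against a single object; instead I would build $Y \leqT X \oplus Z$ in consecutive blocks $I_0, I_1, \dots \subseteq \Nb$. On block $I_k$ the bits of $Y$ are chosen to bet \emph{against} a suitably weighted combination $\bar M_k$ of those ``total-ifications'' of $Z$-partial-computable martingales that have, by stage $k$, verified far enough to look like genuine martingales; the total-ification must be of the lazy (deferred-betting) kind, so that self-referential pseudo-martingales never actually get to bet. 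At a bit of $I_k$ where the two one-step values of $\bar M_k$ are too close to be compared effectively, the bit of $Y$ is copied from a fresh bit of $X$; this is the only place $X$ is consulted, and it is what forces $Y \not\leqT Z$. The block lengths and weights are tuned so that, for each $k$, the set of $X$ for which the capital of $\bar M_k$ grows by more than a factor $2^k$ across $I_k$ has measure at most $2^{-k}$; taking tails, these events assemble into a $Z$-computable Schnorr test (the measures are rational and the events are decidable from $X$, $Z$, and the relevant finite approximations), which $X$ passes. One then checks that every total $Z$-computable martingale $M$ is, from some point on, a summand of every $\bar M_k$, that the diagonalization keeps the combined martingales bounded up to the summable tie-breaking and activation errors, and that the Schnorr-test conclusion rules out the remaining unbounded growth; hence no total $Z$-computable martingale succeeds on $Y$, i.e.\ $Y$ is computably random relative to $Z$. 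As the whole construction is an $(X \oplus Z)$-computable recursion, $\rca$ forms $Y$.

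The main obstacle is twofold, and parallels Section~\ref{sec-Inc}. First, one must design the block lengths, the weights, and the laziness of the total-ification so that the bookkeeping actually closes: the summands must activate in time, the combined-martingale growth must be genuinely governed by a Schnorr test rather than by an arbitrary $\Sigma^0_2$ event, and the pathological self-referential martingales must be harmlessly frozen. Second, one must carry the argument out in $\rca$ using only $\iso$ and $\bso$: the naive phrasings of ``for every $e$ the martingale $M_e$ is eventually captured'' and ``the combined martingale is bounded along $Y$'' threaten to invoke $\Delta^0_2$ induction, so the proof must be arranged — as in Sections~\ref{sec-Inc} and~\ref{sec-Implications} — to manipulate concrete sets (codes for the blocks, the weights, the Schnorr test) and to apply induction only to $\Sigma^0_1$ and $\Pi^0_1$ formulas. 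I expect the block design and its verification to be the real content; the $\rca$ formalization, though delicate, should follow the template already established earlier in the paper.
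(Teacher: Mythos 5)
There is a genuine gap, and it sits exactly at the step you flag as a verification: ``every total $Z$-computable martingale $M$ is, from some point on, a summand of every $\bar M_k$.'' With a lazy (deferred-betting) totalification, a summand only starts mirroring $\Phi_e^Z$ on those strings where convergence has been verified within whatever time allowance your block schedule grants, and that allowance is a function computable from $X \oplus Z$. If no $X \oplus Z$-computable function dominates the running times of all total $Z$-computable martingales --- e.g.\ when $X$ is a Schnorr random whose degree is hyperimmune-free relative to $Z$ --- then a slow but total $M$ simply never activates on the relevant blocks, $\bar M_k$ carries no information about it, and nothing in your argument defeats it. Your Schnorr test controls the wrong event: it bounds the growth of $\bar M_k$ along $Y$ caused by the copied tie-breaking bits, but the failure mode is a \emph{coverage} failure (summands not activating), which does not depend on $X$ at all and so cannot be ruled out by $X$ passing a test. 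There is also a structural obstruction to the whole plan: as described, your construction is a single functional applied to $X \oplus Z$, so if it worked for every Schnorr random $X$ it would give a uniform (Medvedev) reduction from Schnorr randoms to computably randoms, which Miyabe's theorem --- cited in the introduction of this very paper --- rules out. Some non-uniformity is therefore unavoidable, and your sketch does not say where it enters.

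The paper's proof supplies exactly the missing non-uniformity as a dichotomy. Given $Y$ Schnorr random relative to $Z$: either $Y$ is $1$-random relative to $Z$, in which case it is already computably random by Proposition~\ref{prop-BasicRandImp}; or some $\Sigma^{0,Z}_1$-test captures $Y$, and then the settling function $f(n) = $ least $i$ with $Y \in [B_{n,i}]$ eventually dominates every $Z$-computable function (Claim~\ref{claim-ZDom} --- note that this is where the ``Schnorr random passes a suitable $Z$-computable test'' idea is actually used, namely to prove domination, not to bound martingale growth). With such an $f \leqT Y \oplus Z$ in hand, the standard ``high computes computably random'' construction goes through: one builds from $f$ and $Z$ a single supermartingale multiplicatively dominating all $Z$-computable supermartingales (the domination provides exactly the time allowance your lazy totalification lacks) and takes its leftmost non-ascending path. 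Your target statement $\exists Y \leqT X \oplus Z$ and the $\rca$ bookkeeping in your first paragraph are fine; to repair the proof you should replace the uniform block construction by this case split, or by some equivalent way of extracting a dominating function from the failure of $1$-randomness.
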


\begin{proof}
Assume $\sran$.  Let $Z$ be given.  We want to find a set $X$ that is computably random relative to $Z$.  By $\sran$, let $Y$ be Schnorr random relative to $Z$.  If $Y$ is $1$-random relative to $Z$, then it is also computably random relative to $Z$ by Proposition~\ref{prop-BasicRandImp} and we are done.  Otherwise, $Y$ is not $1$-random relative to $Z$, so there is a $\Sigma^{0,Z}_1$-test $(\MP{U}_n)_{n \in \Nb}$ with $Y \in \bigcap_{n \in \Nb}\MP{U}_n$.  Let $(B_{n,i})_{n,i \in \Nb}$ denote the code for $(\MP{U}_n)_{n \in \Nb}$.  For notational ease, assume that $\forall n \forall i(B_{n,i} \subseteq B_{n,i+1})$.  Define $f \colon \Nb \imp \Nb$ by
\begin{align*}
f(n) = \text{the least $i$ such that $(\exists \sigma \in B_{n,i})(\sigma \subseteq Y)$}
\end{align*}
(recall that each $B_{n,i}$ is finite, so $f$ can be defined in $\rca$).

For functions $f, g \colon \Nb \imp \Nb$, say that \emph{$f$ eventually dominates $g$} if $(\exists n)(\forall k > n)(g(k) < f(k))$.

\begin{Claim}\label{claim-ZDom}
If $g \colon \Nb \imp \Nb$ is a function with $g \leqT Z$, then $f$ eventually dominates $g$.
\end{Claim}

\begin{proof}[Proof of Claim]
Suppose for a contradiction that there is a $g \leqT Z$ that is not eventually dominated by $f$.  Define a uniform sequence of $\Sigma^{0,Z}_1$ sets $(\MP{V}_n)_{n \in \Nb}$ coded by $(C_{n,m})_{n,m \in \Nb} \leqT Z$ by letting

\begin{align*}
C_{n,m} = 
\begin{cases}
\emptyset & \text{if $n \geq m$}\\
\text{a finite $C \supseteq C_{n,m-1} \cup B_{m,g(m)}$ with $\mu(C) = 2^{-n}-2^{-m}$} & \text{if $n < m$}.
\end{cases}
\end{align*}
This is possible because if $n < m$ and $\mu(C_{n,m-1}) = 2^{-n} - 2^{-(m-1)}$, then
\begin{align*}
\mu(C_{n, m-1} \cup B_{m,g(m)}) \leq 2^{-n} - 2^{-(m-1)} + 2^{-m} = 2^{-n} - 2^{-m},
\end{align*}
so such a $C_{n,m}$ exists.  We have that $\forall n (\mu(\MP{V}_n) = 2^{-n})$, so $(\MP{V}_n)_{n \in \Nb}$ is a Schnorr test relative to $Z$.  Furthermore, this test captures $Y$ because if $m > n$ and $f(m) \leq g(m)$, then $Y \in [B_{m,g(m)}] \subseteq [C_{n,m}] \subseteq \MP{V}_n$.  This contradicts that $Y$ is Schnorr random relative to $Z$.
\end{proof}

The rest of the proof follows the usual proof that every high set computes a computably random set (see e.g.\ \cite[Theorem~7.5.2]{NiesBook}).  We use $f$ to define a supermartingale that multiplicatively dominates every supermartingale $\leqT Z$.  In the following, all supermartingales are $\Qb^{\geq 0}$-valued.

First, using our effective sequence $(\Phi_e)_{e \in \Nb}$ of all Turing functionals, define a sequence of Turing functionals $(\Psi_e)_{e \in \Nb}$ such that
  $\Psi_e^Z$ always computes a partial supermartingale, and
  if $\Phi_e^Z$ is total and computes a supermartingale, then $\forall \sigma(\Phi_e^Z(\sigma) = \Psi_e^Z(\sigma))$.
This can be accomplished by setting
\begin{align*}
\Psi_e^Z(\emptyset) &= \Phi_e^Z(\emptyset)\\
\Psi_e^Z(\sigma^\smf a) &=
\begin{cases}
\Phi_e^Z(\sigma^\smf a) & \text{if $\Phi_e^Z(\sigma)\da$, $\Phi_e^Z(\sigma^\smf0)\da$, $\Phi_e^Z(\sigma^\smf1)\da$, and $\Phi_e^Z(\sigma^\smf0) + \Phi_e^Z(\sigma^\smf1) \leq 2\Phi_e^Z(\sigma)$}\\
\ua & \text{otherwise},
\end{cases}
\end{align*}
for $a \in \{0,1\}$.
Now define a sequence of Turing functionals $(\Gamma_e)_{e \in \Nb}$ such that
  $\Gamma_e^Z$ always computes a partial supermartingale,
  $\Gamma_e^Z(|\sigma|) = 1$ if $|\sigma| \leq e$, and
  if $\Phi_e^Z$ is total and computes a supermartingale, then there is a $c \in \Nb$ such that $\forall \sigma(\Phi_e^Z(\sigma) \leq c\Gamma_e^Z(\sigma))$.
This can be accomplished by setting
\begin{align*}
\Gamma_e^Z(\sigma) = 
\begin{cases}
1 & \text{if $|\sigma| \leq e$}\\
0 & \text{if $|\sigma| > e$ and $\Psi_e^Z(\sigma \rst e)\da = 0$}\\
\Psi_e^Z(\sigma)/\Psi_e^Z(\sigma \rst e) & \text{if $|\sigma| > e$, $\Psi_e^Z(\sigma)\da$, and $\Psi_e^Z(\sigma \rst e)\da > 0$}\\
\ua & \text{otherwise}.
\end{cases}
\end{align*}
If $\Phi_e^Z$ is total and computes a supermartingale, let $c > \max\{\Phi_e^Z(\sigma) : \sigma \in 2^e\}$.  Then $\forall \sigma(\Phi_e^Z(\sigma) \leq c\Gamma_e^Z(\sigma))$.

Assemble a supermartingale from $Z$ and $f$ as follows.  First, for each $e \in \Nb$, let
\begin{align*}
S_e(\sigma) =
\begin{cases}
\Gamma_e^Z(\sigma) & \text{if $|\sigma| \leq e$ or $(\forall \tau \in 2^{\leq|\sigma|})(\text{$\Gamma_e^Z(\tau)$ halts within $f(|\sigma|) + e$ steps})$}\\
0 & \text{otherwise.}
\end{cases}
\end{align*}
Now let $S(\sigma) = \sum_{e \in \Nb}2^{-e}S_e(\sigma)$.  Notice that $S$ is $\Qb^{\geq 0}$-valued because $S_e(\sigma) = 1$ when $e \geq |\sigma|$, so $\sum_{e \geq |\sigma|}2^{-e}S_e(\sigma) = 2^{-e+1}$.  One may then verify that each $S_e$ is a supermartingale and therefore that $S$ is a supermartingale.

Suppose that $P \leqT Z$ is a supermartingale.  We show that there is a $d \in \Nb$ such that $\forall  \sigma(P(\sigma) \leq dS(\sigma))$.  Let $e_0$ be such that $P = \Phi_{e_0}^Z$.  Then $\Gamma_{e_0}^Z$ is total, so define the total function $g \leqT Z$ by
\begin{align*}
g(n) = \text{the least $t$ such that $(\forall \tau \in 2^{\leq n})(\text{$\Gamma_{e_0}^Z(\tau)$ halts within $t$ steps})$}.
\end{align*}
By Claim~\ref{claim-ZDom}, there is an $n \in \Nb$ such that $(\forall k > n)(g(k) < f(k))$.  By padding, let $e_1 > \max\{g(m) : m \leq n\}$ be such that $\Gamma_{e_1}$ is the same functional as $\Gamma_{e_0}$.  Then
\begin{align*}
(\forall k)(\forall \tau \in 2^{\leq k})(\text{$\Gamma_{e_1}^Z(\tau)$ halts within $f(k) + e_1$ steps}),
\end{align*}
and therefore $\forall \sigma (S_{e_1}(\sigma) = \Gamma_{e_1}^Z(\sigma))$.  Let $c$ be such that $\forall \sigma(P(\sigma) \leq c\Gamma_{e_1}^Z(\sigma))$.  Let $d = c2^{e_1}$.  Then, for all $\sigma \in 2^{<\Nb}$, 
\begin{align*}
P(\sigma) \leq c\Gamma_{e_1}^Z(\sigma) = cS_{e_1}(\sigma) \leq c2^{e_1}S(\sigma) = dS(\sigma),
\end{align*}
as desired.

To finish the proof, let $X$ be the leftmost non-ascending path of $S$.  That is, define $X = \lim_{s}\sigma_s$ recursively by $\sigma_0 = \emptyset$ and
\begin{align*}
\sigma_{s+1} = 
\begin{cases}
{\sigma_s}^\smf 0 & \text{if $S({\sigma_s}^\smf 0) \leq S(\sigma_s)$}\\
{\sigma_s}^\smf 1 & \text{otherwise}.
\end{cases}
\end{align*}
If $P \leqT Z$ is a supermartingale, there is a $d \in \Nb$ such that $\forall \sigma(P(\sigma) \leq dS(\sigma))$.  So for all $n \in \Nb$, $P(X \rst n) \leq dS(X \rst n) \leq dS(\emptyset)$.  Thus $P$ does not succeed on $X$.  Thus no supermartingale $P \leqT Z$ succeeds on $X$, so $X$ is computably random relative to $Z$.
\end{proof}

\section{Weak Demuth and balanced randomness} \label{s:wD}

Randomness notions that have been introduced only recently include  $h$-weak Demuth randomness for an order function $h$ as well as the special case of balanced randomness, where $h(n) $ can be taken to be $2^n$  \cite[Section 7]{FigueiraHirschfeldtMillerNgNies}.   An $h$-Demuth test is like a Martin-L\"of test, except that we allow ourselves to change the index  of the $n$\textsuperscript{th} component of the test up to $h(n)$ many times.  To make this precise, we must first define codes for $h$-r.e.\ functions.

\begin{Definition}[$\rca$] \label{df:balanced}
Let $h \colon \Nb \imp \Nb$.  A (coded) \emph{$h$-r.e.\ function} is a function $g \colon \Nb \times \Nb \imp \Nb$ such that $|\{s : g(n,s) \neq g(n,s+1)\}| \leq h(n)$ for every $n \in \Nb$.  If also $h, g \leqT Z$ for some set $Z$, then we say that $g$ is a (coded) \emph{$h$-r.e.\ function relative to $Z$}.
\end{Definition}

If $g$ codes an $h$-r.e.\ function, then $\rca$ proves that the limit $\lim_s g(n,s)$ exists for each individual $n$, but it does not prove that there is always a function $f$ such that $\forall n (f(n) = \lim_s g(n,s))$.

 \begin{Definition}[$\rca$]{\ }
\begin{itemize}
\item Let $h \leqT Z$.  A code for an \emph{$h$-Demuth test relative to $Z$} is a coded $h$-r.e.\ function $g \leqT Z$ where, for all $n \in \Nb$, $e_n = \lim_s g(n,s)$ is an index such that $\Phi_{e_n}^Z$ computes a code for a $\Sigma^{0,Z}_1$ set $\MP{U}_n$ with $\mu(\MP{U}_n) \leq 2^{-n}$.

\medskip

\item A set $X$ \emph{weakly passes} the $h$-Demuth test relative to $Z$ coded by $g$ if there is an $n \in \Nb$ such that $X \notin \MP{U}_n$, where, as above, $\MP{U}_n$ is the $\Sigma^{0,Z}_1$ set coded by $\Phi_{e_n}^Z$ for $e_n = \lim_s g(n,s)$.

\medskip

\item For $h \leqT Z$, $X$ is \emph{$h$-weakly Demuth random relative to $Z$} if $X$ weakly passes every $h$-Demuth test relative to $Z$. These definitions are sometimes extended to classes of order functions in the expected way.

\medskip

\item $X$ is \emph{balanced random relative to $Z$} if $X$ weakly passes every $O(2^n)$-Demuth test relative to $Z$ (that is, if, for every $c \in \Nb$, $X$ weakly passes every $c2^n$-Demuth test relative to $Z$). 

\medskip

\item Let $h$ be a function that is provably total in $\rca$.  Then $\wdrr{h}$ is the statement ``for every $Z$ there is an $X$ that is $h$-weakly Demuth random relative to $Z$.''

\medskip

\item $\bran$ is the statement ``for every $Z$ there is an $X$ that is balanced random relative to $Z$.''
\end{itemize}
\end{Definition}

Not every $1$-random set is balanced random.  For example, there are left-r.e.\ $1$-random sets, but no left-r.e.\ set is balanced random.  However, if $X = X_0 \oplus X_1$ is $1$-random, then either $X_0$ is balanced random or $X_1$ is balanced random.  This fact follows from the more elaborate \cite[Theorem 23]{FigueiraHirschfeldtMillerNgNies}, which states that a $1$-random set $X$ is $O(h(n)2^n)$-weakly Demuth random for some order function $h$ if and only $X$ it is not $\omega$-r.e.-tracing (roughly, $X$ is $\omega$-r.e.-tracing if  for each $\omega$-r.e.\ function there is an $X$-r.e.\ trace of a fixed size bound).  We sketch the argument.  Suppose that $X = X_0 \oplus X_1$ is $1$-random.  Then $X_0$ is $1$-random and $X_1$ is $1$-random relative to $X_0$ by van Lambalgen's theorem.  If $X_0$ is not $\omega$-r.e.-tracing, then, by \cite[Theorem 23]{FigueiraHirschfeldtMillerNgNies}, $X_0$ is $O(h(n)2^n)$-weakly Demuth random for some order function $h$, and therefore $X_0$ is balanced random.  On the other hand, if $X_0$ is $\omega$-r.e.-tracing, then every $O(2^n)$-Demuth test can be converted into a $\Sigma^{0, X_0}_1$-test.  So if $X_1$ were not balanced random, then $X_1$ would not be $1$-random relative to $X_0$, which contradicts van Lambalgen's theorem.  Thus, in this case, $X_1$ must be balanced random.

We now give a direct proof that if $X = X_0 \oplus X_1$ is $1$-random, then either $X_0$ or $X_1$ is balanced random.  This proof avoids considering $\omega$-r.e.-traceability and is easy to formalize in $\rca$.

\begin{Theorem}\label{thm-MLRimpBran}
$\rca \vdash \mlr \imp \bran$.  Thus $\mlr$ and $\bran$ are equivalent over $\rca$.
\end{Theorem}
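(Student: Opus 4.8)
The plan is to prove the stronger, pointwise statement that for all $X_0, X_1, Z$, if $X_0 \oplus X_1$ is $1$-random relative to $Z$ then at least one of $X_0, X_1$ is balanced random relative to $Z$; equivalently, if neither $X_0$ nor $X_1$ is balanced random relative to $Z$, then some $\Sigma^{0,Z}_1$-test captures $X_0 \oplus X_1$. Granting this, $\mlr \imp \bran$ follows: given $\mlr$ and a set $Z$, fix $X$ that is $1$-random relative to $Z$, write $X = X_0 \oplus X_1$ with $X_0, X_1 \leqT X$, and read off a balanced random set relative to $Z$. I would also remark that the converse $\bran \imp \mlr$ is immediate, since a Demuth test whose index function is constant in $s$ is just a Martin-L\"of test (given via indices, which is equivalent to giving the code directly), so in $\rca$ balanced randomness relative to $Z$ implies $1$-randomness relative to $Z$.

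For the main direction I would argue as follows. Suppose neither $X_0$ nor $X_1$ is balanced random relative to $Z$, and fix $c2^n$-Demuth tests relative to $Z$ that respectively witness this, coded by $h$-r.e.\ functions $g^0, g^1 \leqT Z$ with $h(n) = c2^n$ (taking the maximum of the two constants, and using that a $c'2^n$-Demuth test is also a $c2^n$-Demuth test when $c' \le c$); thus $X_i$ lies in every component of its test. For $i \in \{0,1\}$, a level $n$, and an index $j$, let $\MP{U}^i_{n,j}$ denote the $\Sigma^{0,Z}_1$ set obtained by running $\Phi^Z_j$ as a code for an open set but truncating the enumeration as soon as its enumerated measure would exceed $2^{-n}$. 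This guarantees $\mu(\MP{U}^i_{n,j}) \le 2^{-n}$ always, while for the valid limiting index $j = \lim_s g^i(n,s)$ no truncation occurs, so $X_i \in \MP{U}^i_{n,\lim_s g^i(n,s)}$ for every $n$. I would then build a uniform sequence $(\MP{G}_n)_{n \in \Nb}$ of $\Sigma^{0,Z}_1$ sets by enumerating, at each stage $s$, the effectively open set $\{Y_0 \oplus Y_1 : Y_0 \in \MP{U}^0_{n,g^0(n,s)},\ Y_1 \in \MP{U}^1_{n,g^1(n,s)}\}$, whose measure is the product of the two factors and hence at most $2^{-2n}$, into $\MP{G}_n$.

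The key estimate is that $\mu(\MP{G}_n) \le (2c2^n+1)2^{-2n} \le (2c+1)2^{-n}$: the joint index $(g^0(n,s),g^1(n,s))$ changes at most $2c2^n$ times as $s$ grows, since its change set is contained in the union of the change sets of $g^0(n,\cdot)$ and $g^1(n,\cdot)$, so $\MP{G}_n$ is a union of at most $2c2^n+1$ sets each of measure $\le 2^{-2n}$. At the same time $X_0 \oplus X_1$ lies in the set built from the limiting pair $(\lim_s g^0(n,s),\lim_s g^1(n,s))$, which is one of the sets comprising $\MP{G}_n$; hence $X_0 \oplus X_1 \in \MP{G}_n$ for every $n$. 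Choosing $d$ with $2^d \ge 2c+1$, the sequence $(\MP{G}_{n+d})_{n \in \Nb}$ is then a $\Sigma^{0,Z}_1$-test capturing $X_0 \oplus X_1$, contradicting $1$-randomness of $X_0 \oplus X_1$ relative to $Z$.

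The main obstacle is carrying out the measure estimate for $\MP{G}_n$ inside $\rca$ without invoking $\bst$ or stronger induction: I would need to verify, using only $\Sigma^0_1$ (equivalently bounded) induction, that a function changing value at most $k$ times takes at most $k+1$ values, apply this to the joint index sequence at each level, and confirm that the nested-truncation construction really produces a legitimate code for a uniform sequence of $\Sigma^{0,Z}_1$ sets. Everything else — that a $c'2^n$-Demuth test is a $c2^n$-Demuth test, that truncation leaves the valid limiting components intact, that the product of two effectively open sets is effectively open with the product measure, and that $X_0, X_1 \leqT X_0 \oplus X_1$ — should be routine bookkeeping within $\rca$.
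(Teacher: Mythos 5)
Your proposal is correct and follows the paper's overall strategy: take $X = X_0 \oplus X_1$ that is $1$-random relative to $Z$, assume neither half is balanced random relative to $Z$, and merge the two $c2^n$-Demuth tests into a $\Sigma^{0,Z}_1$-test capturing $X$. Where you differ is in the merging step, and your version is somewhat cleaner. The paper forms, at level $n$, the union of $\MP{U}^0_{n,s}\oplus\MP{U}^1_{n,s}$ only over stages $s$ at which $g_0$ changes; this keeps the count at $c2^n$, but then the limiting pair of indices is guaranteed to appear only at levels where $g_0$ changes at or after the last change of $g_1$, which forces a case split (one of the two index functions ``changes last'' on infinitely many $n$) and a tail union $\MP{V}_n=\bigcup_{i>n+k}\MP{O}_i$ to turn capture on infinitely many levels into a test capturing $X$. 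You instead union over all joint index values, bound their number by $2c2^n+1$ because the change set of the pair is contained in the union of the two change sets, and so capture $X$ at every level, with the extra factor $2c+1$ absorbed by the shift $n\mapsto n+d$; this eliminates both the case analysis and the tail unions. Your truncation device plays the role of the paper's remark that $g_0,g_1$ may be modified so that every $\Phi^Z_{g_i(n,s)}$ codes a set of measure at most $2^{-n}$, and the inductions you flag (a function with at most $k$ changes takes at most $k+1$ values; finite subadditivity of measure) are bounded $\Sigma^0_1$-style arguments available in $\rca$, exactly parallel to the paper's own count of at most $c2^n$ members of $\MP{O}_n$. Both arguments formalize equally well; yours trades a constant factor for a noticeably simpler capture argument, and your brief treatment of the easy direction $\bran\imp\mlr$ (a Demuth test with constant index function is a Martin-L\"of test) matches what the paper leaves implicit.
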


\begin{proof}
Assume $\mlr$.  Let $Z$ be given.  We want to find a set that is balanced random relative to $Z$.  Let $X = X_0 \oplus X_1$ be $1$-random relative to $Z$.  We show that one of  $X_0$, $X_1$ is balanced random relative to $Z$.  Assume otherwise.  Let $g_0, g_1 \colon \Nb \times \Nb \imp \Nb$ be codes for $c2^n$-Demuth tests (for some $c \in \Nb$) relative to $Z$ capturing $X_0$ and $X_1$, respectively.  By modifying $g_0$ and $g_1$ if necessary, we may assume that $\Phi_{g_0(n,s)}^Z$ and $\Phi_{g_1(n,s)}^Z$ both compute codes of $\Sigma^{0,Z}_1$ sets $\MP{U}^0_{n,s}$ and $\MP{U}^1_{n,s}$ of measure $\leq 2^{-n}$ for all $n$ and $s$.  We may also assume that $g_0(n, \cdot)$ and $g_1(n, \cdot)$ change at least once for each $n$ (by increasing $c$ by $1$ and adding dummy changes, if necessary).

We define a $\Sigma^{0,Z}_1$-test capturing $X$, contradicting that $X$ is $1$-random relative to $Z$.  If $g_0$ does not change last on infinitely many $n$, then $g_1$ changes last on infinitely many $n$.  So suppose for the sake of argument that $g_0$ changes last on infinitely many $n$.  Define a uniform sequence $(\MP{O}_n)_{n \in \Nb}$ of $\Sigma^{0,Z}_1$ sets by letting
\begin{align*}
\MP{O}_n = \bigcup_{\substack{s > 0\\g_0(n,s) \neq g_0(n,s-1)}} \MP{U}^0_{n,s} \oplus \MP{U}^1_{n,s}
\end{align*}
for each $n$.  Here, for $\mbf{\Sigma^0_1}$ sets $\MP{A}_0$ and $\MP{A}_1$, $\MP{A}_0 \oplus \MP{A}_1$ denotes the $\mbf{\Sigma^0_1}$ set of all $Y = Y_0 \oplus Y_1$ with $Y_0 \in \MP{A}_0$ and $Y_1 \in \MP{A}_1$.  For $\mbf{\Sigma^0_1}$ sets $\MP{A}_0$ and $\MP{A}_1$, it is straightforward to produce a code for $\MP{A}_0 \oplus \MP{A}_1$ and to show that if $\mu(\MP{A}_0) \leq a_0$ and $\mu(\MP{A}_1) \leq a_1$, then $\mu(\MP{A}_0 \oplus \MP{A}_1) \leq a_0a_1$.  So $\mu(\MP{U}^0_{n,s} \oplus \MP{U}^1_{n,s}) \leq 2^{-2n}$ for all $n$ and $s$.  Each $\MP{O}_n$ is the union of at most $c2^n$ sets (because $g_0$ is $c2^n$-r.e.) of measure at most $2^{-2n}$ each.  Therefore $\mu(\MP{O}_n) \leq c2^{-n}$ for each $n$.  Now, choose $k$ such that $2^k > c$.  Define another uniform sequence $(\MP{V}_n)_{n \in \Nb}$ of $\Sigma^{0,Z}_1$ sets by letting $\MP{V}_n = \bigcup_{i > n+k}\MP{O}_i$ for each $n$.  Then $\mu(\MP{V}_n) \leq c2^{-n-k} \leq 2^{-n}$ for each $n$, so $(\MP{V}_n)_{n \in \Nb}$ is a $\Sigma^{0,Z}_1$-test.

We claim that $X \in \bigcap_{n \in \Nb}\MP{V}_n$.  It suffices to show that, for every $n$, there is an $i > n+k$ with $X \in \MP{O}_i$.  By the assumption on $g_0$, let $i > n+k$ be such that there is an $s_0 > 0$ such that $g_0(i, s_0) \neq g_0(i, s_0-1)$ and $(\forall t > s_0)(g_1(i, t) = g_1(i, s_0))$.  Let $s_0 > 0$ be greatest such that $g_0(i, s_0) \neq g_0(i, s_0-1)$.  Then $g_0(i,s_0) = \lim_s g_0(i,s)$ and $g_1(i,s_0) = \lim_s g_1(i,s)$, so $X_0 \in \MP{U}^0_{i,s_0}$ and $X_1 \in \MP{U}^1_{i,s_0}$ because the $c2^n$-Demuth tests coded by $g_0$ and $g_1$ capture $X_0$ and $X_1$.  Thus
\begin{align*}
X = X_0 \oplus X_1 \in \MP{U}^0_{i,s_0} \oplus \MP{U}^1_{i,s_0} \subseteq \MP{O}_i
\end{align*}
as desired.
\end{proof}

\section{Non-implications via $\omega$-models}\label{sec-NonImp}

In this section, we exhibit $\omega$-models of $\rca$ that witness various non-implications between pairs of randomness-existence principles.  We also compare randomness-existence principles to principles asserting the existence of diagonally non-recursive functions.

\begin{Definition}[$\rca$]{\ }
 A function $f \colon \Nb \imp \Nb$ is \emph{diagonally non-recursive relative to $Z$} if $\forall e(\Phi_e^Z(e)\da \imp f(e) \neq \Phi_e^Z(e))$.
 $\dnr$ is the statement ``for every $Z$ there is an $f$ that is diagonally non-recursive relative to $Z$.''
\end{Definition}

$\dnr$ is a common benchmark by which to gauge the computability-theoretic strength of set-existence principles.  It is a well-known observation of Ku\v{c}era (see e.g.~\cite[Proposition~4.1.2]{NiesBook}) that every $1$-random set computes a diagonally non-recursive function.  By formalizing this result, one readily sees that $\rca \vdash \mlr \imp \dnr$.  In contrast, $\cran$ is not strong enough to produce diagonally non-recursive functions.

\begin{Proposition}
There is an $\omega$-model of $\rca + \cran + \neg\dnr$.  Thus $\rca \nvdash \cran \imp \dnr$, and therefore also $\rca \nvdash \cran \imp \mlr$.
\end{Proposition}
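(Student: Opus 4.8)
The plan is to use the standard correspondence between $\omega$-models of $\rca$ and Turing ideals (nonempty collections of subsets of $\omega$ closed downward under $\leqT$ and closed under $\oplus$): every Turing ideal $\mathcal I$ determines an $\omega$-model $\mathcal M_{\mathcal I}$ of $\rca$, and since the matrices of $\cran$ and $\dnr$ are arithmetic, $\mathcal M_{\mathcal I} \models \cran$ iff every $Z \in \mathcal I$ admits some $X \in \mathcal I$ that is computably random relative to $Z$, while $\mathcal M_{\mathcal I} \models \neg\dnr$ iff some $Z \in \mathcal I$ is such that no $f \in \mathcal I$ is diagonally non-recursive relative to $Z$. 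I would build $\mathcal I$ as the downward closure of an increasing tower $\emptyset = G_0 \leqT G_1 \leqT G_2 \leqT \cdots$ such that (a) for each $n$ there is an $X_n \leqT G_{n+1}$ that is computably random relative to $G_n$, and (b) no $G_n$ computes a diagonally non-recursive function. Because the $G_n$ are linearly ordered under $\leqT$, the set $\mathcal I = \{A : (\exists n)(A \leqT G_n)\}$ is automatically a Turing ideal.

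Granting such a tower, $\mathcal M_{\mathcal I} \models \cran$: given $Z \in \mathcal I$, pick $n$ with $Z \leqT G_n$; then every supermartingale $\leqT Z$ is $\leqT G_n$, so $X_n$, being computably random relative to $G_n$, is also computably random relative to $Z$, and $X_n \in \mathcal I$ by (a). Also $\mathcal M_{\mathcal I} \models \neg\dnr$, witnessed by $Z = \emptyset \in \mathcal I$: any $f \in \mathcal I$ satisfies $f \leqT G_n$ for some $n$, and $G_n$ computes no diagonally non-recursive function by (b), so $f$ is not diagonally non-recursive relative to $\emptyset$. Hence $\mathcal M_{\mathcal I}$ is the required $\omega$-model, so $\rca \nvdash \cran \imp \dnr$. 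Finally, since $\rca \vdash \mlr \imp \dnr$ (observed just before the statement), a proof of $\cran \imp \mlr$ over $\rca$ would yield a proof of $\cran \imp \dnr$ over $\rca$, which is now impossible; so $\rca \nvdash \cran \imp \mlr$ as well.

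Thus everything reduces to building the tower, which I would do by recursion using the following lemma: \emph{if $Z$ does not compute a diagonally non-recursive function, then there is a set $X$ that is computably random relative to $Z$ such that $Z \oplus X$ does not compute a diagonally non-recursive function.} One takes $G_0 = \emptyset$, which computes no diagonally non-recursive function since no computable function is one (by the usual fixed-point diagonalization), and, given $G_n$ satisfying (b), applies the lemma with $Z = G_n$ to obtain $X_n$ and puts $G_{n+1} = G_n \oplus X_n$; then $X_n \leqT G_{n+1}$ and $G_{n+1}$ computes no diagonally non-recursive function, so (a) and (b) persist. Note that the lemma imposes no constraint on the Turing degree of $X$, so one is free to obtain $X$ by a priority or forcing construction (with $X$ of whatever degree the construction produces; the exact bound is immaterial).

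To prove the lemma I would run a construction that meets, for each $e$, the requirement that the $e$-th supermartingale computable from $Z$ does not succeed on $X$, and the requirement that $\Phi_e^{Z \oplus X}$ is not a diagonally non-recursive function. The first family is handled by the standard method: at the appropriate stage one passes to a subtree (or commits to always extending the current string) in a direction keeping a fixed weighted sum of the activated supermartingales under control, which keeps each such supermartingale bounded along $X$. For the second family, at the current approximation with string $\sigma$ one searches for an extension $\tau \supseteq \sigma$ (inside whatever tree currently controls the supermartingales) and an index $i$ with $\Phi_i(i)\da$ and $\Phi_e^{Z \oplus \tau}(i)\da = \Phi_i(i)$; if such $\tau$ is found, one forces $X \supseteq \tau$, permanently meeting the requirement; if no such $\tau$ exists, then $\Phi_e^{Z \oplus X}$ must be non-total, since otherwise the map sending $i$ to $\Phi_e^{Z \oplus \tau_i}(i)$ — where $\tau_i$ is the first $\tau \supseteq \sigma$ in that tree for which the $i$-th value converges — would be a total function computable from $Z$ and, by the failure of the search, diagonally non-recursive, contradicting the hypothesis on $Z$; so the requirement is again met. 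The main obstacle is exactly the interaction between the two families: forcing $X \supseteq \tau$ requires a jump to a possibly far-away extension that can inflate the supermartingale values and may conflict with the tree already chosen to control earlier supermartingales, so the trees and the order in which requirements are installed must be arranged (with finitely much injury) so that each diagonalization step stays inside the current controlling tree; verifying that this is always possible, and that each supermartingale is then disturbed only finitely often, is the delicate heart of the argument. (A statement equivalent to the lemma may also be available in the literature on computably random sets with prescribed computability-theoretic properties; either route suffices.)
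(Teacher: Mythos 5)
Your overall architecture is the same as the paper's: build a Turing ideal by iterating a one-step preservation lemma, then read off the $\omega$-model; your reductions (the ideal from the tower, $\cran$ and $\neg\dnr$ in the resulting model, and the final inference to $\rca \nvdash \cran \imp \mlr$ via $\rca \vdash \mlr \imp \dnr$) are all correct. The difference is in the key lemma. The paper gets its step by quoting two known results: if $B$ computes no diagonally non-recursive function, then there is an $A$ with $A \oplus B$ still computing none and with $A$ \emph{high} relative to $B$ (a cited lemma of Cholak et al.), and any $A$ high relative to $B$ joins with $B$ to compute a set computably random relative to $B$ (the construction replicated in the proof of the paper's Theorem on $\sran \imp \cran$). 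Your composite lemma is exactly the composition of these two facts, so it is true; but your proposal does not prove it. You sketch a hybrid construction and then explicitly defer its decisive point --- that the $\dnr$-avoidance diagonalizations can be carried out compatibly with the trees controlling the supermartingales, with each supermartingale disturbed only finitely often --- calling it ``the delicate heart of the argument,'' with only an unspecified appeal to the literature as a fallback. Since that verification is the entire new mathematical content of your route, as written there is a genuine gap.

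The gap is fillable, and in fact more easily than your finite-injury framing suggests: no injury is needed. Take as conditions a string $\sigma$ together with a finite set of \emph{total} $Z$-computable supermartingales with positive rational weights (partial candidates are simply never activated; the construction is non-effective), subject to the weighted sum staying below a fixed bound $q$ at and above $\sigma$. The tree of extensions on which the weighted sum stays below $q$ is $Z$-computable and has no dead ends, since the supermartingale inequality gives each node a child whose value does not increase; a new total supermartingale can always be activated with weight small enough to preserve the slack at the current node, and this only shrinks the tree. The $\dnr$-avoidance step for $\Phi_e^{Z \oplus X}$ then searches for its diagonalizing extension \emph{inside the current tree}: if one is found, the requirement is met permanently and nothing earlier is injured (the extension already respects the measure-of-control condition); if none exists, the $Z$-computability of the tree is exactly what converts the failed search into either an argument on which $\Phi_e^{Z \oplus X}$ diverges for every path through the tree, or a total $Z$-computable diagonally non-recursive function, contradicting the hypothesis on $Z$. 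Completed this way, your route is a genuine alternative to the paper's: it is self-contained where the paper leans on the Cholak et al.\ lemma, at the cost of redoing that work by hand. It is also worth noting that by the (relativized) Nies--Stephan--Terwijn theorem any $X$ as in your lemma forces $Z \oplus X$ to be high relative to $Z$, which explains why the paper's proof factors through highness rather than attacking the composite statement directly.
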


\begin{proof}
For the purposes of this proof, say that a set $A$ is \emph{high} relative to a set $B$ if there is a single function $f \leqT A$ that eventually dominates every function $g \leqT B$.  We apply  the following two results.
\begin{itemize}
\item[(i)] If $A$ is high relative to $B$, then $A \oplus B$ computes a set that is computably random relative to $B$ (see \cite[Theorem~7.5.2]{NiesBook}; the proof is also replicated in the proof of Theorem~\ref{thm-SRimpCR}).

\medskip

\item[(ii)] If $B$ does not compute a diagonally non-recursive function, then there is an $A$ that is high relative to $B$ such that $A \oplus B$ does not compute a diagonally non-recursive function~\cite[Lemma~4.14]{Cholak:2006eg}.
\end{itemize}
\noindent  By iterating result~(ii) in the usual way, we produce an $\omega$-model $\mf M = (\omega, \MP S)$ of $\rca + \neg\dnr$ such that for every $B \in \MP S$ there is an $A \in \MP S$ that is high relative to $B$.  By result~(i), $\mf M \models \cran$.  Thus $\mf M \models \rca + \cran + \neg\dnr$.
\end{proof}

In order to give useful formalizations of stronger versions of $\dnr$, we must carefully express computations relative to $Z'$ for a set $Z$ without implying the existence of $Z'$ as a set.  We make the following definitions in $\rca$ (see~\cites{AvigadDeanRute,BienvenuPateyShafer}).

\begin{itemize}
\item Let $e \in Z'$ abbreviate the formula $\Phi_e^Z(e)\da$.

\medskip

\item Let $\sigma \subseteq Z'$ abbreviate the formula $(\forall e < |\sigma|)(\sigma(e) = 1 \biimp e \in Z')$.

\medskip

\item Let $\Phi_e^{Z'}(x) = y$ abbreviate the formula $(\exists \sigma \subseteq Z')(\Phi_e^\sigma(x) = y)$.  Similarly, let $\Phi_e^{Z'}(x)\da$ denote that there is a $y$ such that $\Phi_e^{Z'}(x) = y$.
\end{itemize}
Notice that, by bounded $\Sigma^0_1$ comprehension, $\rca$ proves that the set $\{e < n : e \in Z'\}$ exists for every $Z$ and $n$.  Then by letting $\sigma$ be the characteristic string of $\{e < n : e \in Z'\}$, we see that $\rca$ proves that for every $Z$ and $n$ there is a $\sigma$ of length $n$ such that $(\forall e < |\sigma|)(\sigma(e) = 1 \biimp e \in Z')$.

\begin{Definition}[$\rca$]{\ }
 A function $f \colon \Nb \imp \Nb$ is \emph{diagonally non-recursive relative to $Z'$} for a set $Z$ if $\forall e(\Phi_e^{Z'}(e)\da \imp f(e) \neq \Phi_e^{Z'}(e))$.
  $\dnrr{2}$ is the statement ``for every $Z$ there is an $f$ that is diagonally non-recursive relative to $Z'$.''
\end{Definition}

Ku\v{c}era in fact showed that every $n$-random set computes a function that is diagonally non-recursive relative to $0^{(n-1)}$.  This result can be formalized in $\rca$ (see \cite[Theorem~2.8]{BienvenuPateyShafer}).  In particular, $\rca \vdash \mlrr{2} \imp \dnrr{2}$.  In contrast, $\wtr$ is not strong enough to produce diagonally non-recursive functions relative to $0'$.

\begin{Theorem}\label{thm-WTRnot2DNR}
There is an $\omega$-model of $\rca + \wtr + \neg\dnrr{2}$.  Thus $\rca \nvdash \wtr \imp \dnrr{2}$, and therefore also $\rca \nvdash \wtr \imp \mlrr{2}$.
\end{Theorem}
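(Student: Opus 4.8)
The plan is to construct the required $\omega$-model as $\mathfrak{M} = (\omega, \mathcal{S})$, where $\mathcal{S}$ is the downward Turing closure of an increasing tower $\emptyset = Z_0 \leqT Z_1 \leqT Z_2 \leqT \cdots$ built by repeatedly adjoining weakly $2$-random sets. The key observation about $\dnrr{2}$ is that it is witnessed already at $Z = \emptyset$: we will have $\mathfrak{M} \models \neg\dnrr{2}$ as soon as $\mathcal{S}$ contains no function that is diagonally non-recursive relative to $\emptyset'$. So the two things I need from the tower are (a) that every $Z_n$ has a weakly $2$-random-relative-to-$Z_n$ set somewhere in $\mathcal{S}$, and (b) that no $Z_n$ computes an $\emptyset'$-DNR function.

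The engine of the construction is the following preservation lemma $(\dagger)$: if a set $B$ does not compute an $\emptyset'$-DNR function, then there is a set $X$ that is weakly $2$-random relative to $B$ such that $B \oplus X$ also does not compute an $\emptyset'$-DNR function. Granting $(\dagger)$, the rest is bookkeeping: set $Z_0 = \emptyset$ (which computes no $\emptyset'$-DNR function, since no total computable function is $\emptyset'$-DNR), and given $Z_n$ apply $(\dagger)$ with $B = Z_n$ to obtain a weakly $2$-random-over-$Z_n$ set $X_n$ with $Z_{n+1} := Z_n \oplus X_n$ still not computing an $\emptyset'$-DNR function. Let $\mathcal{S} = \{Y : \exists n\,(Y \leqT Z_n)\}$. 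Because the tower is increasing, $\mathcal{S}$ is a Turing ideal, so $\mathfrak{M}$ is an $\omega$-model of $\rca$. It satisfies $\wtr$: given $Y \in \mathcal{S}$, fix $n$ with $Y \leqT Z_n$; then $X_n$ is weakly $2$-random relative to $Z_n$, hence relative to $Y$ (a weak $2$-test relative to $Y$ is in particular a weak $2$-test relative to $Z_n$), and $X_n \leqT Z_{n+1} \in \mathcal{S}$. It satisfies $\neg\dnrr{2}$ because every member of $\mathcal{S}$ lies below some $Z_n$ and so, by (b), computes no $\emptyset'$-DNR function; in particular no $f \in \mathcal{S}$ is DNR relative to $\emptyset'$, which witnesses $\neg\dnrr{2}$ at $Z = \emptyset$. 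Thus $\rca \nvdash \wtr \imp \dnrr{2}$, and since $\rca \vdash \mlrr{2} \imp \dnrr{2}$ we also get $\rca \nvdash \wtr \imp \mlrr{2}$.

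The main obstacle is proving $(\dagger)$. Note that it cannot be obtained from a measure argument alone: a $1$-random set avoiding the Ku\v{c}era null $\Sigma^{0,\emptyset'}_1$ class already computes an $\emptyset'$-DNR function via a fixed Turing reduction, so almost every $X$ computes an $\emptyset'$-DNR function, and the weakly $2$-random witness we want must be found inside a null set. I would prove $(\dagger)$ by forcing with conditions consisting of a finite binary string together with a $\Pi^{0,B}_1$ class of positive measure. The weak $2$-randomness requirements are met in the standard way: if $\mathcal{P}$ is a condition of positive measure and $\bigcap_j \mathcal{V}_j$ is a null $\Pi^{0,B}_2$ class (with the $\mathcal{V}_j$ open and decreasing), then $\mathcal{P} \setminus \bigcap_j \mathcal{V}_j$ has full relative measure in $\mathcal{P}$, so some $\mathcal{P} \cap \overline{\mathcal{V}_j}$ is a positive-measure $\Pi^{0,B}_1$ subcondition avoiding $\bigcap_j \mathcal{V}_j$. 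The delicate requirement, for each $e$, is to arrange that $\Phi_e^{B \oplus X}$ is not a total $\emptyset'$-DNR function. From a condition $\mathcal{P}$, either some subcondition forces $\Phi_e^{B\oplus X}(n)\ua$ for some $n$ (and the requirement is satisfied), or $\mu(\mathcal{P} \cap \{X : \Phi_e^{B\oplus X}(n)\ua\}) = 0$ for every $n$, so that $\Phi_e^{B\oplus X}$ is total for almost every $X \in \mathcal{P}$; in the latter case, if it were impossible to force $\Phi_e^{B\oplus X}(n)$ to equal a value attained on a positive-measure piece of $\mathcal{P}$ distinct from $\Phi_n^{\emptyset'}(n)$, then a $B$-computable search for a value of $\Phi_e^{B\oplus X}(n)$ realized on a non-null sub-piece of $\mathcal{P}$ would produce an $\emptyset'$-DNR function computable from $B$, contradicting the hypothesis, so we may force $\Phi_e^{B\oplus X}(n) = v$ for a suitable $n$ and $v \neq \Phi_n^{\emptyset'}(n)$ and thereby kill the requirement. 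The technical heart is making this $B$-computable search genuinely effective while only approximating the measures of the relevant $\Sigma^{0,B}_1$ and $\Pi^{0,B}_1$ sets; I would handle this by keeping the measures of the conditions near $1$ and arguing by a pigeonhole/counting bookkeeping on the disjoint c.e.\ open sets $\{X : \Phi_e^{B\oplus X}(n) = v\}$. (Alternatively, one could quote an existing preservation result for weak $2$-randomness in place of $(\dagger)$ if available.)
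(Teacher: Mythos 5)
Your outer scaffolding (building a Turing ideal from a tower, verifying $\wtr$ and $\neg\dnrr{2}$ in the resulting $\omega$-model, and deducing the second non-implication from $\rca \vdash \mlrr{2} \imp \dnrr{2}$) is fine, but the engine $(\dagger)$ is false as stated, so there is a genuine gap. Take $B = \emptyset'$. No $f \leqT \emptyset'$ is DNR relative to $\emptyset'$ (if $f = \Phi_e^{\emptyset'}$ is total then $f(e) = \Phi_e^{\emptyset'}(e)$), so $B$ satisfies the hypothesis of $(\dagger)$. But every $X$ that is weakly $2$-random relative to $\emptyset'$ is in particular $1$-random relative to $\emptyset'$, i.e.\ $2$-random, and by Ku\v{c}era's theorem (the very fact the paper cites to get $\rca \vdash \mlrr{2} \imp \dnrr{2}$) every $2$-random set computes a function that is DNR relative to $\emptyset'$; hence $B \oplus X$ computes an $\emptyset'$-DNR function for every candidate $X$, and $(\dagger)$ fails (likewise for any $B \geqT \emptyset'$). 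So ``$B$ computes no $\emptyset'$-DNR function'' is too weak an invariant to iterate, and correspondingly your forcing sketch breaks in two places. First, the diagonalization is inverted: to make $\Phi_e^{B \oplus X}$ fail to be $\emptyset'$-DNR you must force \emph{agreement} $\Phi_e^{B \oplus X}(n) = \Phi_n^{\emptyset'}(n)$ at some $n$ where the right-hand side converges (or force partiality); forcing a value different from $\Phi_n^{\emptyset'}(n)$ is perfectly compatible with DNR-ness. Second, and fatally, the ``$B$-computable search for a value realized on a non-null sub-piece of $\mathcal{P}$'' cannot be made effective: certifying $\mu(\mathcal{P} \cap \{X : \Phi_e^{B \oplus X}(n) = v\}) > 0$ for a $\Pi^{0,B}_1$ condition $\mathcal{P}$ is not a $B$-r.e.\ event, and keeping the measure of $\mathcal{P}$ near $1$ does not help, because the convergent values may shatter the measure into pieces smaller than any threshold fixed in advance while $B$ cannot tell when the approximations have stabilized. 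The falsity of $(\dagger)$ shows that no bookkeeping can repair this branch under your hypothesis; you would have to identify and preserve a strictly stronger property of $B$ through the iteration, and that is the missing idea.

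For comparison, the paper avoids iterated preservation entirely. It fixes a single $1$-random $Z$ of hyperimmune-free degree with $Z' \leqT B$ for some $B$ r.e.\ in $\emptyset'$ with $\emptyset' \leT B \leT \emptyset''$ (a Ku\v{c}era--Nies basis theorem); then $Z$ computes no $\emptyset'$-DNR function, since otherwise $B$ would, forcing $B \geqT \emptyset''$ by the Arslanov completeness criterion relativized to $\emptyset'$. It then splits $Z$ into columns and uses van Lambalgen together with the fact that when $X \oplus Y$ has hyperimmune-free degree, $1$-randomness of $Y$ relative to $X$ already upgrades to weak $2$-randomness relative to $X$; the columns then supply the weakly $2$-random witnesses inside the ideal they generate. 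If you want to keep an iterative flavour, the invariant you carry along must encode something of this strength (e.g.\ a fixed bound on the jumps strictly below $\emptyset''$, or hyperimmune-freeness), not merely the absence of an $\emptyset'$-DNR function.
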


\begin{proof}
The intuition is to build a model of $\rca + \wtr + \neg\dnrr{2}$ out of the columns of a weakly $2$-random set $Z$ that does not compute a $\dnrr{2}$ function.  For this idea to work, $Z$ must be chosen with a little care because the relevant direction of van Lambalgen's theorem does not hold for weak $2$-randomness in general~\cite{BarmpaliasDowneyNg}.

Recall that a set $A$ has \emph{hyperimmune-free degree} (or is \emph{computably dominated}) if every $f \leqT A$ is eventually dominated by a computable function.  Let $Z$ be a $1$-random set of hyperimmune-free degree that does not compute a diagonally non-recursive function relative to $0'$.  Such a $Z$ exists by  \cite[Theorem 5.1]{Kucera.Nies:11} (also see~\cite[Exercise 1.8.46]{NiesBook}), which states that if $\MP C \subseteq 2^\omega$ is a non-empty $\Pi^0_1$ class and $B \geT 0'$ is $\Sigma^0_2$, then there is a $Z \in \MP C$ of hyperimmune-free degree with $Z' \leqT B$.  Let $\MP C \subseteq 2^\omega$ be a non-empty $\Pi^0_1$ class consisting entirely of $1$-randoms, and let $B$ be any set r.e.\ in $0'$ such that $0' \leT B \leT 0''$.  Let $Z \in \MP C$ be of hyperimmune-free degree such that $Z' \leqT B$.  Then of course $Z$ is $1$-random and has hyperimmune-free degree.  Furthermore, $Z$ does not compute a diagonally non-recursive function relative to $0'$.  If $Z$ computes a diagonally non-recursive function relative to $0'$, then so does $B$, but then we would have $B \geqT 0''$ by the Arslanov completeness criterion relative to $0'$, which is a contradiction.

Decompose $Z$ into columns $Z = \bigoplus_{n \in \omega}Z_n$, where $Z_n = \{k : \la n, k \ra \in Z\}$ for each $n$.  By a straightforward relativization of~\cite[Proposition~3.6.4]{NiesBook}, if $X \oplus Y$ has hyperimmune-free degree and $Y$ is $1$-random relative to $X$, then $Y$ is also weakly $2$-random relative to $X$.  It follows that $Z_{n+1}$ is weakly $2$-random relative to $\bigoplus_{i \leq n}Z_i$ for every $n$.  This is because $\bigoplus_{i \leq n+1}Z_i$ has hyperimmune-free degree (as $Z$ has hyperimmune-free degree) and $Z_{n+1}$ is $1$-random relative to $\bigoplus_{i \leq n}Z_i$ by van Lambalgen's theorem.

Let $\MP S = \{X : \exists n(X \leqT \bigoplus_{i \leq n}Z_i)\}$, and let $\mf M = (\omega, \MP S)$.  $\MP S$ contains no diagonally non-recursive function relative to $0'$, so $\mf M \models \rca + \neg\dnrr{2}$.  If $X \in \MP S$ and $n$ is such that $X \leqT \bigoplus_{i \leq n}Z_i$, then $Z_{n+1} \in \MP S$ is weakly $2$-random relative to $X$.  Thus $\mf M \models \wtr$.  Therefore $\mf M \models \rca + \wtr + \neg\dnrr{2}$.
\end{proof}

The principles $\mlrr{2}$ and $\dnrr{2}$ are closely related to the \emph{rainbow Ramsey theorem}.  Let $[\Nb]^n$ denote the set of $n$-element subsets of $\Nb$, and call a function $f \colon [\Nb]^n \imp \Nb$ \emph{$k$-bounded} if $|f^{-1}(c)| \leq k$ for every $c \in \Nb$.  Call an infinite $R \subseteq \Nb$ a \emph{rainbow for $f$} if $f$ is injective on $[R]^n$.  The rainbow Ramsey theorem for pairs and $2$-bounded colorings (denoted $\rrt^2_2$) is the statement ``for every 2-bounded  $f \colon [\Nb]^2 \imp \Nb$, there is a set $R$ that is a rainbow for $f$.''  By formalizing work of Csima and Mileti~\cite{CsimaMileti}, Conidis and Slaman~\cite{ConidisSlaman} have shown that $\rca \vdash \mlrr{2} \imp \rrt^2_2$.  J.\ Miller~\cite{MillerRRT}, again building on~\cite{CsimaMileti}, has shown that in fact $\rca \vdash \dnrr{2} \biimp \rrt^2_2$.  By Theorem~\ref{thm-WTRnot2DNR}, it follows that $\rca \nvdash \wtr \imp \rrt^2_2$.

From Theorem~\ref{thm-MLRimpBran}, we know that $\rca \vdash \mlr \imp \bran$.  In particular, if $h$ is any provably total function that is $O(2^n)$, then $\rca \vdash \mlr \imp \wdrr{h}$.  We now show that this implication is close to optimal.  Specifically, in Theorem~\ref{thm-WKLnothWDR} below we show that if $h$ is a provably total function that dominates the function $n \mapsto k^n$ for every $k$, then $\rca \nvdash \mlr \imp \wdrr{h}$.  In fact, in this case even $\wkl \nvdash \wdrr{h}$.  $\wkl$ is the system whose axioms are those of $\rca$, plus \emph{weak K\"onig's lemma}, which is the statement ``every infinite subtree of $2^{<\Nb}$ has an infinite path.''  $\wkl$ is strictly stronger than $\rca + \mlr$~\cite{YuSimpson}.

Recall the following definitions for a set $X \subseteq \omega$.
\begin{itemize}
\item Write $\sigma \leL X$ if $\sigma$ is to the left of $X$:  $\exists \rho(\rho^\smf 0 \subseteq \sigma \andd \rho^\smf 1 \subseteq X)$.  Then $X$ is \emph{left-r.e.}\ if $\{\sigma : \sigma \leL X\}$ is r.e.

\medskip

\item $X$ is \emph{superlow} if $X' \leqtt 0'$.  Equivalently, $X$ is superlow if $X' \leqwtt 0'$ because, for any $Z \subseteq \omega$, $Z \leqwtt 0'$ if and only if $Z \leqtt 0'$.
\end{itemize}

\begin{Proposition}\label{prop-SuperlowKre}
For every non-empty $\Pi^0_1$ class $\MP C \subseteq 2^\omega$, there is a superlow $Z \in \MP C$ such that, for every set $X \leqT Z$, there is a $k \in \omega$ such that $X$ is $k^n$-r.e.
\end{Proposition}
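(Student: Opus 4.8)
The plan is to prove this as a strengthening of the superlow basis theorem, by a construction over the given $\Pi^0_1$ class that additionally tames the sets reducible to $Z$. First I would fix a computable tree $T \subseteq 2^{<\omega}$ with $[T] = \MP C \ne \emptyset$ and build $Z \in [T]$ together with a computable approximation $(z_s)_{s}$ converging to $Z$, running the construction relative to $\emptyset'$ (in the style of the low basis theorem) so that the $\emptyset'$-computations used to build $Z$ revise their answers only boundedly often at each level.

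The construction would meet, in a priority ordering, two families of requirements. The requirements $\mathcal N_e$ decide whether $\Phi^Z_e(e){\da}$ exactly as in the superlow basis theorem, so that $Z' \leqtt \emptyset'$ and $Z$ is superlow. The requirements $\mathcal R_{e}$ handle the reduct $\Phi^Z_e$: if $\Phi^Z_e$ is total, they ensure it is $k^n$-r.e.\ for a suitable $k=k(e)$. I would split $\mathcal R_e$ into subrequirements $\mathcal R_{e,n}$, one per level $n$; $\mathcal R_{e,n}$ waits for $\Phi^{z_s}_e(n)$ to converge and then restrains the committed initial segment of $Z$ below the use of that computation. The approximation $n,s \mapsto \Phi^{z_s}_e(n)$ then changes at coordinate $n$ only when $\mathcal R_{e,n}$'s restraint is injured by a higher-priority requirement, so the key point is to order the subrequirements so that $\mathcal R_{e,n}$ has at most $k(e)^n$ requirements above it that can injure it — for instance by letting the higher-priority activity relevant to level $n$ be governed by the strings and subtrees of length, respectively level, $n$, of which there are only about $2^n$.

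The hard part will be the coordination: imposing restraint for the $\mathcal R_{e,n}$ requirements while still leaving the construction free enough both to satisfy the superlowness requirements and to keep $z_s$ extendible in $T$ (the usual difficulty in the low basis theorem, now interacting with the restraints), and simultaneously keeping the injury count at each level $n$ down to the exponential bound $k(e)^n$ rather than to some larger computable function. Once such a construction and its verification are in place, the conclusion is routine: every $X \leqT Z$ equals $\Phi^Z_e$ for some $e$ with $\Phi^Z_e$ total, hence is $k(e)^n$-r.e.
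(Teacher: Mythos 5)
There is a genuine gap, and it sits exactly where you deferred it. The whole content of the proposition beyond the superlow basis theorem is the quantitative bound: the approximation to $X(n)=\Phi_e^Z(n)$ may change at most $k(e)^n$ times. Your plan to secure this by ``ordering the subrequirements so that $\mathcal{R}_{e,n}$ has at most $k(e)^n$ requirements above it'' does not work as stated, for two reasons. First, what must be bounded is not the number of higher-priority requirements but the number of times they \emph{act}, and in a basis-theorem construction of this kind these counts compound: a decision at priority rank $m$ can flip on the order of $2^m$ times (each flip of an earlier decision can reset it), exactly as in the superlow basis theorem. Second, with all $e$ interleaved, any standard pairing places the requirement responsible for $\Phi_e^Z(n)$ at rank roughly quadratic in $n$ for fixed $e$, which yields change bounds like $2^{n^2}$, not $k^n$. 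What is needed --- and what your sketch never supplies --- is a placement of the question ``$\Phi_e^Z(n)=1$?'' at a priority position that is \emph{linear in $n$ for each fixed $e$} (with slope depending on $e$), together with an injury analysis that exploits the one-directional (lexicographic) movement of the decision vector. Your suggestion that the relevant higher-priority activity at level $n$ is ``governed by the strings and subtrees of length $n$, of which there are only about $2^n$'' does not address either point, and also ignores that the computable approximation you need is to the values $\Phi_e^{z_s}(n)$, whose changes are driven by all injuries below the use, including those caused by the $\mathcal{N}_e$'s.

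For comparison, the paper avoids restraints and explicit injury counting altogether. Define the r.e.\ operator $W$ by $2^e(2n+1)\in W^Z \Biimp \Phi_e^Z(n)=1$, and run the proof of the superlow basis theorem with $W$ in place of the jump, obtaining $Z\in\MP{C}$ with $W^Z$ left-r.e.; then $Z'\leqm W^Z$ gives superlowness, and since bit $m$ of a left-r.e.\ set changes at most about $2^m$ times, $X=\Phi_e^Z$ is $2^{2^e(2n+1)}$-r.e., hence $k^n$-r.e.\ with $k=2^{2^{e+2}}$. The coding $2^e(2n+1)$ is precisely the linear-in-$n$ placement your construction is missing, and the left-r.e.\ property is precisely the compounded change bound. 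If you want to salvage your direct construction, you would in effect have to rebuild these two ingredients: assign the $(e,n)$-th decision the priority position $2^e(2n+1)$ (or similar), and verify that the vector of decisions moves lexicographically so that the decision at position $m$ changes at most $2^m$ times --- at which point you have reproduced the paper's argument in more laborious form.
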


\begin{proof}
Let $Z \mapsto W^Z$ be the r.e.\ operator defined by
\begin{align*}
2^e(2n+1) \in W^Z \Biimp \Phi_e^Z(n) = 1.
\end{align*}
Apply the proof of the superlow basis theorem as given in~\cite[Theorem~1.8.38]{NiesBook}, but with the operator $W$ instead of the usual Turing jump operator $J$, to get a $Z \in \MP C$ such that $W^Z$ is left-r.e.  Clearly $Z' \leqm W^Z$, from which it follows that $Z$ superlow.  Now suppose that $X \leqT Z$, and let $e$ be such that $\Phi_e^Z = X$.  The fact that $W^Z$ is left-r.e.\ implies that $X$ is $2^{2^e(2n+1)}$-r.e., so $X$ is $k^n$-r.e.\ for $k = 2^{2^{e+2}}$.
\end{proof}

\begin{Proposition}\label{prop-WKL-Kre}
There is an $\omega$-model $\mf M = (\omega, \MP S)$ of $\wkl$ such that every $X \in \MP S$ is superlow and for every $X \in \MP S$ there is a $k \in \omega$ such that $X$ is $k^n$-r.e.
\end{Proposition}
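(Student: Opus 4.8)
The plan is to obtain $\mf M$ from a single application of Proposition~\ref{prop-SuperlowKre} to a suitable non-empty $\Pi^0_1$ class $\MP P \subseteq 2^\omega$ whose members code $\omega$-models of $\wkl$ by their columns. Concretely, we arrange that for every $G \in \MP P$ the collection $\MP{S}_G = \{X : \exists m\, (X \leqT \bigoplus_{i \le m}(G)_i)\}$ is a Turing ideal satisfying $\wkl$; since every $X \in \MP{S}_G$ is then $\leqT G$, it suffices to choose $G$ superlow with the additional property (from Proposition~\ref{prop-SuperlowKre}) that every $X \leqT G$ is $k^n$-r.e.\ for some $k$, and both conclusions will hold of every member of $\MP{S}_G$. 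One cannot replace $\MP{S}_G$ by $\{X : X \leqT G\}$ for a single $G$ of $\pa$ degree, because no set is $\pa$ relative to itself; this is exactly why the columns are needed, with column $(G)_n$ made responsible only for trees computed from the earlier columns.

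\emph{Construction of $\MP P$.} Fix a computable $r \colon \omega \to \omega \times \omega$, $r(n) = (m_n, e_n)$, with $m_n < n$ for all $n$ and with each pair occurring as some $r(n)$. For $G \in 2^\omega$ and $n \geq 1$, working relative to the oracle $B_n := \bigoplus_{i \le m_n}(G)_i$ (which depends only on $(G)_0, \dots, (G)_{m_n}$), define a ``slowed-down'' tree $T^G_n \subseteq 2^{<\omega}$ by enumerating the computation $\Phi_{e_n}^{B_n}$, treating its output as a candidate characteristic function of a subtree of $2^{<\omega}$ and building $T^G_n$ level by level to agree with what has been verified, \emph{unless} one witnesses that $\Phi_{e_n}^{B_n}$ fails to be an infinite binary tree (failure of downward closure, of binary branching, of totality, or the emptiness of some level --- all $\Sigma^{0, B_n}_1$ events), at which point $T^G_n$ is completed to all of $2^{<\omega}$. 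Thus $T^G_n$ is always an infinite binary tree, $[T^G_n]$ is exactly the set of paths through $\Phi_{e_n}^{B_n}$ when that is an infinite binary tree, and $0^\omega \in [T^G_n]$ otherwise. Deciding ``$\tau \in T^G_n$'' for $|\tau| = \ell$ uses only $\ell$ steps of $\Phi_{e_n}^{B_n}$, hence only boundedly many bits of $G$, uniformly in $n$ and $\tau$, so $\MP P := \{G : (\forall n \geq 1)((G)_n \in [T^G_n])\}$ is a $\Pi^0_1$ class. It is non-empty: choose columns recursively, with $(G)_0 = 0^\omega$ and $(G)_n$ any path through the infinite binary tree $T^G_n$ (using K\"onig's lemma in the metatheory).

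\emph{Verification.} For $G \in \MP P$, the collection $\MP{S}_G$ is non-empty and closed under $\leqT$ and $\oplus$, hence a Turing ideal, so $(\omega, \MP{S}_G) \models \rca$. For $\wkl$: suppose $X \in \MP{S}_G$, say $X = \Phi_d^{B_m}$ with $B_m := \bigoplus_{i \le m}(G)_i$, and $\Phi_e^X$ is the characteristic function of an infinite binary tree. Composition gives an index $e'$ with $\Phi_{e'}^{B_m} = \Phi_e^X$. Pick $n$ with $r(n) = (m, e')$, so $n > m$ and $\Phi_{e_n}^{B_n} = \Phi_{e'}^{B_m} = \Phi_e^X$ is an infinite binary tree; hence $[T^G_n]$ is exactly its set of paths. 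Since $(G)_n \in [T^G_n]$ and $(G)_n \leqT \bigoplus_{i \le n}(G)_i$, the column $(G)_n$ lies in $\MP{S}_G$ and is a path through $\Phi_e^X$. Therefore $(\omega, \MP{S}_G) \models \wkl$.

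\emph{Conclusion and the main obstacle.} Apply Proposition~\ref{prop-SuperlowKre} to $\MP P$ to get a superlow $G \in \MP P$ such that every $X \leqT G$ is $k^n$-r.e.\ for some $k$, and set $\mf M = (\omega, \MP{S}_G)$, so $\mf M \models \wkl$. Every $X \in \MP{S}_G$ satisfies $X \leqT G$ with $X$ total, which gives $X' \leqm G'$; since $G' \leqtt 0'$ and $\leqm\,\subseteq\,\leqtt$ with $\leqtt$ transitive, $X' \leqtt 0'$, so $X$ is superlow, and $X$ is $k^n$-r.e.\ for some $k$ by the choice of $G$. The only step needing genuine care is the effectivization of $T^G_n$: the ``detect a failure of being an infinite binary tree'' mechanism must run with bounded use in $G$ and uniformly in $n$ so that $\MP P$ is honestly $\Pi^0_1$, while still guaranteeing that $T^G_n$ is itself an infinite binary tree and that $[T^G_n]$ recovers exactly the paths of $\Phi_{e_n}^{B_n}$ in the non-failure case. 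This is routine but is where the bookkeeping lies.
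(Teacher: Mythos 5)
Your proposal is correct and has the same skeleton as the paper's proof: generate the model as the ideal $\MP{S}_Z = \{X : \exists n (X \leqT \bigoplus_{i \leq n}Z_i)\}$ of a single set $Z$ drawn from a non-empty $\Pi^0_1$ class all of whose members column-code $\omega$-models of $\wkl$, and then apply Proposition~\ref{prop-SuperlowKre} once to that class; superlowness and the $k^n$-r.e.\ property then pass down to every $X \leqT Z$ exactly as you say. The difference is how the $\Pi^0_1$ class is obtained. You build it by hand, with a universal slowed-tree construction in which column $(G)_n$ is forced to be a path through a delayed version of the $e_n$-th candidate tree computed from the earlier columns; this is self-contained but is exactly where the bookkeeping you flag lives. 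The paper instead takes the class of all $Z$ such that each column $Z_{n+1}$ is a $\{0,1\}$-valued diagonally non-recursive function relative to $\bigoplus_{i \leq n}Z_i$ (manifestly $\Pi^0_1$, with no delay mechanism needed) and invokes the Jockusch--Soare equivalence of $\wkl$ with the existence of $\{0,1\}$-valued DNR functions; this buys a much shorter verification at the price of citing that classical equivalence. One small inaccuracy in your write-up: failure of totality of $\Phi_{e_n}^{B_n}$ is not a $\Sigma^{0,B_n}_1$ event, so it cannot be ``witnessed'' as you list; but this is harmless, since when the candidate is partial yet never visibly violates treeness your slowed tree is still an infinite binary tree and the constraint on $(G)_n$ remains satisfiable, and the verification of $\wkl$ only ever uses indices of genuinely total infinite binary trees (only your parenthetical claim that $0^\omega \in [T^G_n]$ in the failure case should be dropped or rephrased).
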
 
 
\begin{proof}
Given a set $Z$, decompose $Z$ into columns $Z = \bigoplus_{n \in \omega}Z_n$, and let $$\MP{S}_Z = \{X : \exists n(X \leqT \bigoplus_{i \leq n}Z_i)\}.$$  Let $\MP C \subseteq 2^\omega$ be a non-empty $\Pi^0_1$ class such that $(\omega, \MP{S}_Z) \models \wkl$ for all $Z \in \MP C$.  This can be accomplished, for example, by taking $\MP C$ to be the class of all sets $Z$ such that, for every $n$, $Z_{n+1}$ codes a $\{0,1\}$-valued diagonally non-recursive function relative to $\bigoplus_{i \leq n}Z_i$.  Then, for any such $Z$, $(\omega, \MP{S}_Z)$ models $\rca$ plus ``for every $X$ there is a $\{0,1\}$-valued diagonally non-recursive function relative to $X$,'' which is well-known to be equivalent to $\wkl$ by formalizing classic results of Jockusch and Soare~\cite{Jockusch-Soare:Pi01}.  Let $Z \in \MP C$ be as in the conclusion of Proposition~\ref{prop-SuperlowKre}.  Then every $X \in \MP{S}_Z$ is superlow, and, for every $X \in \MP{S}_Z$, there is a $k$ such that $X$ is $k^n$-r.e.  Thus $\mf M = (\omega, \MP{S}_Z)$ is the desired model.
\end{proof}

\begin{Theorem}\label{thm-WKLnothWDR}
Let $h \colon \omega \imp \omega$ be a function that is provably total in $\rca$ and eventually dominates the function $n \mapsto k^n$ for every $k \in \omega$.  Then there is an $\omega$-model
of $\wkl + \neg\wdrr{h}$.  Thus $\wkl \nvdash \wdrr{h}$ and therefore also $\rca \nvdash \mlr \imp \wdrr{h}$.
\end{Theorem}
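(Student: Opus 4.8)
The plan is to verify that the $\omega$-model $\mf M = (\omega, \MP S)$ of $\wkl$ produced by Proposition~\ref{prop-WKL-Kre} also satisfies $\neg\wdrr h$. Since $\wkl$ implies $\mlr$ (indeed $\wkl$ implies weak weak K\"onig's lemma, which is equivalent to $\mlr$), this at once gives $\mf M \models \rca + \mlr + \neg\wdrr h$ and $\mf M \models \wkl + \neg\wdrr h$, hence $\wkl \nvdash \wdrr h$ and $\rca \nvdash \mlr \imp \wdrr h$. The witness for $\neg\wdrr h$ in $\mf M$ will be $Z = \emptyset$, which lies in $\MP S$; so it suffices to show that no $X \in \MP S$ is $h$-weakly Demuth random relative to $\emptyset$.

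Fix $X \in \MP S$. By Proposition~\ref{prop-WKL-Kre} (which invokes the construction in Proposition~\ref{prop-SuperlowKre}), there is a $k \in \omega$, which we may take to be at least $2$ since a $k^n$-r.e.\ approximation is also $k'^n$-r.e.\ for $k' \geq k$, together with a \emph{computable} approximation $(X_s)_{s \in \omega}$ such that $\lim_s X_s(n) = X(n)$ and $|\{s : X_s(n) \neq X_{s+1}(n)\}| \leq k^n$ for every $n$. It follows that for each length $m$ we have $|\{s : X_s \rst m \neq X_{s+1} \rst m\}| \leq \sum_{j < m}k^j < k^m$, since any change in $X_\cdot \rst m$ at stage $s$ is a change of some bit $j < m$ at stage $s$. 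Since $h$ eventually dominates $n \mapsto k^n$, fix $N_0$ with $h(n) > k^n$ for all $n \geq N_0$, and let $\tau = X \rst N_0$, which we treat as a fixed finite parameter.

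Now construct a computable $h$-Demuth test $(\MP{U}_n)_{n \in \Nb}$ relative to $\emptyset$ that captures $X$, by defining a function $g \colon \Nb \times \Nb \imp \Nb$. For $n < N_0$, let $g(n,s)$ equal, for every $s$, a fixed index for the $\Sigma^0_1$ set $[\tau]$; then $g(n,\cdot)$ never changes (trivially within the $h(n)$ bound), $e_n = \lim_s g(n,s)$ codes $\MP{U}_n = [\tau]$ with $\mu(\MP{U}_n) = 2^{-N_0} \leq 2^{-n}$, and $X \in [\tau] = \MP{U}_n$. For $n \geq N_0$, let $g(n,s)$ be the index for the $\Sigma^0_1$ set $[X_s \rst n]$, which is computable from $s$ because $(X_s)$ is computable; then $|\{s : g(n,s) \neq g(n,s+1)\}| \leq |\{s : X_s \rst n \neq X_{s+1}\rst n\}| < k^n < h(n)$, so $g(n,\cdot)$ is within the $h(n)$ bound, $e_n = \lim_s g(n,s)$ codes $\MP{U}_n = [X \rst n]$ with $\mu(\MP{U}_n) = 2^{-n}$, and $X \in [X \rst n] = \MP{U}_n$. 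Since $(X_s)$ is computable and the values of $g$ on $\{(n,s) : n < N_0\}$ form a finite table, $g$ is computable, so $g \leqT \emptyset$; also $h \leqT \emptyset$ because $h$, being provably total in $\rca$, is computable. Thus $g$ codes an $h$-Demuth test relative to $\emptyset$, and by construction $X \in \bigcap_{n \in \Nb}\MP{U}_n$, so $X$ fails to weakly pass it, whence $X$ is not $h$-weakly Demuth random relative to $\emptyset$. As $\emptyset \in \MP S$ and $X$ was an arbitrary element of $\MP S$, we conclude $\mf M \models \neg\wdrr h$.

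The argument has no serious obstacle, but two points require care. First, the $k^n$-r.e.\ approximation of $X$ supplied by Proposition~\ref{prop-SuperlowKre} must be genuinely \emph{computable} — this is what makes the code $g$ of the test computable, and hence a legitimate test relative to $\emptyset$ — which holds because it is extracted from a left-r.e.\ set, i.e., one with a computable enumeration of its left cut. Second, one must cope with the finitely many components $n < N_0$ where the bound $k^n$ on the number of changes might exceed $h(n)$ (recall $h$ is only controlled eventually, and $h(0),\dots,h(N_0-1)$ are unconstrained); this is handled trivially by using the single fixed string $\tau = X \rst N_0$ for all these components, which is legitimate since $\mu([\tau]) = 2^{-N_0} \leq 2^{-n}$ whenever $n < N_0$.
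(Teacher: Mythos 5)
Your proposal is correct and follows essentially the same route as the paper: take the $\wkl$ model of Proposition~\ref{prop-WKL-Kre}, and use the fact that every set in it is $k^n$-r.e.\ (via a computable approximation) to build an $h$-Demuth test capturing it, the paper leaving this last construction as "straightforward" while you spell it out (including the finitely many components with $n < N_0$). No gaps; this matches the intended argument.
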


\begin{proof}
If $X$ is $k^n$-r.e.\ and $h$ eventually dominates $k^n$, then it is straightforward to define an $h$-Demuth test capturing $X$.  Thus no $k^n$-r.e.\ set is $h$-weakly Demuth random.  Let $\mf M = (\omega, \MP S)$ be the model of $\wkl$ from Proposition~\ref{prop-WKL-Kre}.  Then no $X \in \MP S$ is $h$-weakly Demuth random because for every $X \in \MP S$ there is a $k$ such that $X$ is $k^n$-r.e.  Thus $\mf M \models \wkl + \neg\wdrr{h}$.
\end{proof}
 
\section*{Acknowledgements}
We thank David Belanger, Laurent Bienvenu, and Keita Yokoyama for helpful discussions.  We acknowledge the support of \emph{Centre International de Rencontres Math\'{e}matiques} and of \emph{Mathematisches Forschungsinstitut Oberwolfach}.  The first author acknowledges support through the Marsden fund of New Zealand. The second author   acknowledges the support of the \emph{Fonds voor Wetenschappelijk Onderzoek -- Vlaanderen} Pegasus program.  

\bibliographystyle{amsplain}
\bibliography{RMofRandomness}

\vfill

\end{document}